\pgfplotsset{compat=1.12}
\numberwithin{equation}{section}
\newtheorem{theorem}{Theorem}[section]
\newtheorem{thmletter}{Theorem}
\newtheorem{thmletterprime}{Theorem}
\newtheorem{proposition}[theorem]{Proposition}
\newtheorem{lemma}[theorem]{Lemma}
\newtheorem{corollary}[theorem]{Corollary}
\theoremstyle{definition}
\newtheorem{definition}[theorem]{Definition}
\newtheorem{remark}[theorem]{Remark}
\date{}
\newcommand\blfootnote[1]{%
	\begingroup
	\renewcommand\thefootnote{}\footnote{#1}%
	\addtocounter{footnote}{-1}%
	\endgroup
}
\def\N{\mathbb N}
\def\R{\mathbb R}
\def\C{\mathbb C}
\def\S{\mathbb S}
\def\Rn{\mathbb{R}^n}
\def\ff{\widehat{F}}
\title{Uniform convergence of Hankel transforms}
\author{A. Debernardi\footnote{E-mail: \texttt{adebernardipinos@gmail.com}} \\ 
	\small{Centre de Recerca Matem\`atica and Universitat Aut\`onoma de Barcelona}\\ \small{08193, Bellaterra, Barcelona, Spain}}
\begin{document}
	
\maketitle
\begin{flushleft}
	\small{AMS 2010 Primary subject classification: 42A38. Secondary: 26A48,  40A10, 47G10\\
		{\bf Keywords}: Uniform convergence, Hankel transform, general monotonicity.}
\end{flushleft}

\section*{Abstract}
We investigate necessary and/or sufficient conditions for the pointwise and uniform convergence of the weighted Hankel transforms
$$
\mathcal{L}^{\alpha}_{\nu,\mu}f(r)=r^\mu\int_0^\infty (rt)^\nu f(t) j_\alpha(rt)\, dt, \quad \alpha\geq -1/2, \quad r\geq 0,
$$
where $\nu,\mu\in \R$ are such that $0\leq \mu+\nu\leq \alpha+3/2$. We subdivide these transforms into two classes in such a way that the uniform convergence criteria is remarkably different on each class. In more detail, we have the transforms satisfying $\mu+\nu=0$ (such as the classical Hankel transform), that generalize the cosine transform, and those satisfying $0<\mu+\nu\leq \alpha+3/2$, generalizing the sine transform.
\blfootnote{This research was partially funded by the CERCA Programme of the Generalitat de Catalunya, Centre de Recerca Matem\`atica, and the grant MTM2014--59174--P.}
	
\section{Introduction}
While studying which conditions are necessary and sufficient to guarantee uniform convergence of the Fourier transform 
\begin{equation}
	\label{ftrans}
	\ff(y) = \int_{\Rn} F(x) e^{i x\cdot y}\, dx,
\end{equation}
one encounters the necessity to impose restrictions on $F$. It is clear that the uniform convergence of \eqref{ftrans} follows from the condition $F\in L^1(\Rn)$. However, the latter condition is too restrictive and sometimes not even necessary (see, e.g., \cite{CJ,DLT,LS,ZS}).

It is known that the Fourier transform of a radial function $F(x)=f_0(|x|)$ is also a radial function \cite{SWfourier} given by
\begin{equation}
	\label{fourierradial}
	\widehat{F}(y)  = |\S^{n-1}|\int_{0}^{\infty} t^{n-1} f_0(t) j_\alpha(2\pi |y|t)\, dt, \quad \alpha = \frac{n}{2}-1, \quad n\geq 2,
\end{equation}
where $|\S^{n-1}|$ denotes the area of the unit sphere $\S^{n-1} =\{x\in \Rn : |x|=1\}$, $j_\alpha(z)$ is the normalized Bessel function
\begin{equation}
	\label{besselsimple}
	j_\alpha(z)=\Gamma(\alpha + 1) \bigg(\frac{z}{2}\bigg)^{-\alpha} J_\alpha(z),
\end{equation}
and $J_\alpha(z)$ is the classical Bessel function of the first kind of order $\alpha$. Basic properties of these functions are discussed in Section~\ref{sec2}.

For every $\alpha\geq -1/2$, the \emph{Hankel transform of order $\alpha$} of a function $f:\R_+\to \C$ is defined as
\begin{equation}\label{hankel}
	H_\alpha f (r) =  \frac{2\pi^{\alpha+1}}{\Gamma(\alpha+1)}\int_{0}^{\infty} t^{2\alpha+1} f(t) j_{\alpha}(2\pi rt) \, dt.
\end{equation}
Letting $\alpha=n/2-1$ in \eqref{hankel}, we recover the Fourier transform of a radial function (cf. \eqref{fourierradial}). The Hankel transform belongs to a larger class of operators, introduced by De Carli in \cite{DC}, namely
$$
	\overline{\mathcal{L}}=\Bigg\{ \overline{\mathcal{L}}^\alpha_{\nu,\mu}f(r) =r^\mu \int_0^\infty (rt)^\nu f(t) J_\alpha(rt)\, dt  \Bigg\}, \quad \alpha\geq -1/2,
$$
with $\mu,\nu\in \R$. In particular, operators from  $\overline{\mathcal{L}}$ appear in the Fourier transform of a radial function multiplied by a spherical harmonic (cf. \cite{DCGT}, \cite{DCG}, \cite{SWfourier}, \cite{Ya}). In the present work, we consider operators from $\overline{\mathcal{L}}$, written in terms of the normalized Bessel function $j_\alpha$, i.e.,
\begin{equation}
	\label{generaloperator}
	\mathcal{L}^\alpha_{\nu,\mu}f(r) =r^\mu \int_0^\infty (rt)^\nu f(t) j_\alpha(rt)\, dt.
\end{equation}
Notice that \eqref{besselsimple} allows us to rewrite \eqref{generaloperator} in terms of an operator from $\overline{\mathcal{L}}$ as
$$
	\mathcal{L}^\alpha_{\nu,\mu}f= 2^\alpha \Gamma (\alpha+1)\overline{\mathcal{L}}^\alpha_{\nu-\alpha,\mu}f. 
$$

We list the following examples of classical transforms written in terms of \eqref{generaloperator}. Here we denote by $F$ a radial function of $n$ variables, and $F(x)=f_0(|x|)$.
\begin{enumerate}
	\item Since $j_{-1/2}(z)=\cos z$, the Fourier cosine transform, denoted by $\hat{f}_{\cos}$, corresponds to the transform $\mathcal{L}^{-1/2}_{0,0}f$.
	\item  The Fourier transform of a radial function for $n\geq 2$ (see \eqref{fourierradial})  satisfies  
	$$
		\hat{F}(y)=|\S^{n-1}|\mathcal{L}^{n/2-1}_{n-1,-(n-1)}f_0(2\pi|y|).
	$$
	\item The classical Hankel transform $H_\alpha$ (see \eqref{hankel}) can be written as
	$$
		H_\alpha f(y)=\frac{2\pi^{\alpha+1}}{ \Gamma(\alpha+1)}  \mathcal{L}^\alpha_{2\alpha+1,-(2\alpha+1)}f(2\pi y).
	$$
	\item If $n\geq 2$ and $\psi_k$ is a solid spherical harmonic of degree $k$, then
	$$
		\widehat{\psi_k F}(y)= \psi_k(y)\cdot  2\pi^{n/2} \bigg(\frac{\pi}{i}\bigg)^k \mathcal{L}^\alpha_{2\alpha+1,-(2\alpha+1)}f_0(2\pi|y|),
	$$
	with $\alpha= (n+2k-2)/2$ (see \cite[Ch. IV,~Theorem 3.10]{SWfourier}).
	\item Let $\mathcal{F}_k$ denote the Dunkl transform, defined by means of a root system $R\subset \R^n$, a reflection group $G\subset O(n)$,  and a multiplicity function $k:R\to \R$ that is $G$-invariant. If $f$ is a radial function defined on $\R^n$, then 
	$$
		\mathcal{F}_kf = H_{n/2-1+\langle k\rangle}f,
	$$
	where $\langle k \rangle = \frac{1}{2} \sum_{x\in R} k(x)$ (cf. \cite{Du, Ro} and the references therein). We also refer the reader to \cite{BKO}, where a generalization of the Dunkl transform is introduced, and \cite{GIT}, where uncertainty principle relations are obtained for this new transform.
	\item Since $j_{1/2}(z)=\sin z/z$, the Fourier sine transform (denoted by $\hat{f}_{\sin}$) equals $\mathcal{L}^{1/2}_{1,0}f$.
\end{enumerate}
				
The goal of this paper is to obtain necessary and/or sufficient conditions on $f$ for \eqref{generaloperator} to converge uniformly on $[0,\infty)$, under the restriction $0\leq \mu+\nu\leq \alpha+3/2$. Outside this range, we give sufficient conditions for the pointwise convergence of \eqref{generaloperator} and the corresponding ones concerning uniform convergence on subsets of $\R_+$. By the \emph{uniform convergence} of $\mathcal{L}^\alpha_{\nu,\mu}f$, we mean that the sequence of partial integrals
$$
	r^{\mu}\int_{0}^{N} (rt)^\nu f(t) j_\alpha(rt)\, dt, \quad N\in \N,
$$
converges uniformly. Equivalently, $\mathcal{L}^\alpha_{\nu,\mu}f$ converges uniformly if and only if
\begin{equation}\label{remainder}
	r^{\mu}\int_{M}^{N} (rt)^\nu f(t) j_\alpha(rt)\, dt \to 0, \quad \text{as }N>M\to \infty,
\end{equation}
uniformly in $r$. We refer to the integral in \eqref{remainder} as the \emph{Cauchy remainder}.

Let us first make an observation showing a key difference between a general operator \eqref{generaloperator} and the sine and cosine transform. From the fact that
\begin{equation*}
	\label{abstrans}
	|\mathcal{L}_{\nu,\mu}^\alpha f(r)|\lesssim r^{\mu+\nu} \int_0^1 t^\nu|f(t)|\, dt + r^{\mu+\nu-\alpha-1/2}\int_1^\infty t^{\nu-\alpha-1/2}|f(t)|\, dt
\end{equation*}
(see \eqref{est} in Section~\ref{sec2}) we have that the absolute convergence of $\mathcal{L}_{\nu,\mu}^\alpha f$ follows from the conditions 
\begin{equation}
	\label{integrability}
	t^\nu f(t)\in L^1(0,1),\quad t^{\nu-\alpha-1/2}f(t)\in L^1(1,\infty).
\end{equation} 
However, unlike the case of $\hat{f}_{\sin}$ or $\hat{f}_{\cos}$, the uniform convergence of $\mathcal{L}_{\nu,\mu}^\alpha f$ does not necessarily follow from the integrability conditions \eqref{integrability} that imply its absolute convergence. This is because the kernel
\begin{equation}
	\label{generalkernel}
	K_{\nu,\mu}^\alpha(t,r) = r^{\mu} (rt)^\nu j_\alpha(rt)
\end{equation}
of $\mathcal{L}^\alpha_{\nu,\mu}$ need not be uniformly bounded. Indeed, if we consider the choice of parameters $\mu=-1$, $0<\nu<\alpha+1/2$ and $f(t)=t^{\mu}=1/t$, the conditions in \eqref{integrability} hold, but the Cauchy remainder	
$$
	r^{\mu}\int_{1/(2r)}^{1/r} (rt)^\nu f(t) j_{\alpha} (rt)\, dt \asymp \frac{1}{r} \int_{1/(2r)}^{1/r} t^{-1}\, dt = \frac{\log 2}{r}
$$
does not vanish  as $r\to 0$ (cf. \eqref{smallz} below).

However, there is a special case when the uniform convergence of $\mathcal{L}_{\nu,\mu}^\alpha f$ follows from \eqref{integrability} (cf. Proposition~\ref{unifrough}), namely when 
\begin{equation}\label{paramidentity}
	\mu+\nu=\alpha+1/2.
\end{equation} In particular, the operators representing $\hat{f}_{\sin}$ ($\alpha=1/2,\nu=1,\mu=0$) and $\hat{f}_{\cos}$ ($\alpha=-1/2, \nu=\mu=0$) satisfy \eqref{paramidentity}.

The two main results of the present paper are the following: 

\begin{theorem}\label{thm1}
	Let $\nu\in \R$ and $\mu=-\nu$.  Let $f:\R_+\to \C$ be such that $t^{\nu} f(t)\in  L^1(0,1)$, and
	\begin{align}
		|f(M)| &= o\big( M^{-\nu-1}\big) & \text{ as }M\to \infty,  \label{thm<}\\
		\int_M^\infty t^{\nu-\alpha-1/2}|df(t)| &= o\big(M^{-\alpha-3/2}\big) & \text{ as } M\to \infty. \label{thm>}		
	\end{align}
	Then, a necessary and sufficient condition for $\mathcal{L}^\alpha_{\nu,\mu}f(r)$ to converge uniformly on $\R_+$ is that
	\begin{equation}\label{necsuf}
		\bigg| \int_0^\infty t^{\nu} f(t)\, dt\bigg|<\infty.
	\end{equation}
\end{theorem}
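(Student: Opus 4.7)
\textbf{Necessity} is immediate: evaluating the Cauchy criterion \eqref{remainder} at $r=0$ and using $j_\alpha(0)=1$ gives $\int_M^N t^\nu f(t)\,dt\to 0$ as $M,N\to\infty$, which is exactly \eqref{necsuf}. For \textbf{sufficiency}, I fix $\varepsilon>0$ and aim to bound the Cauchy remainder $I(r,M,N):=\int_M^N t^\nu f(t)j_\alpha(rt)\,dt$ by $O(\varepsilon)$ uniformly in $r\ge 0$ and $N>M$, once $M$ is taken large. The natural splitting point is $t=1/r$: below it $j_\alpha(rt)\approx 1$, above it $j_\alpha(rt)$ oscillates with envelope $(rt)^{-\alpha-1/2}$. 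Accordingly I would decompose $I(r,M,N)$ into an integral over $[M,\min(N,1/r)]$ plus one over $[\max(M,1/r),N]$, either of which may be empty.

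On the low-frequency piece I would write $j_\alpha(rt)=1+(j_\alpha(rt)-1)$. The constant term gives a tail of the convergent integral \eqref{necsuf}, hence $\le\varepsilon$ uniformly in the upper limit. The correction is controlled by $|j_\alpha(rt)-1|\lesssim (rt)^2$ combined with the pointwise bound $|f(t)|\le\varepsilon t^{-\nu-1}$ from \eqref{thm<}, yielding $\varepsilon r^2\int_M^{1/r}t\,dt\le \varepsilon/2$, independent of $r$.

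On the high-frequency piece I would integrate by parts via the Bessel identity $\frac{d}{dt}[t^{2\alpha+2}j_{\alpha+1}(rt)]=2(\alpha+1)t^{2\alpha+1}j_\alpha(rt)$, rewriting $t^\nu j_\alpha(rt)=\frac{t^{\nu-2\alpha-1}}{2(\alpha+1)}\frac{d}{dt}[t^{2\alpha+2}j_{\alpha+1}(rt)]$. The resulting boundary contribution is a multiple of $[t^{\nu+1}f(t)j_{\alpha+1}(rt)]$ at the endpoints, which vanishes uniformly in $r$ by \eqref{thm<} and $|j_{\alpha+1}|\le 1$. The remaining integral expands into $\int t^{\nu+1}j_{\alpha+1}(rt)\,df(t)+(\nu-2\alpha-1)\int t^\nu f(t)j_{\alpha+1}(rt)\,dt$. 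Writing $\tau:=\max(M,1/r)$, the sharp estimate $|j_{\alpha+1}(rt)|\lesssim (rt)^{-\alpha-3/2}$ (valid since $rt\ge 1$ here) reduces the first integral to $r^{-\alpha-3/2}\int_\tau^\infty t^{\nu-\alpha-1/2}|df(t)|=o((r\tau)^{-\alpha-3/2})$ by \eqref{thm>}, and the second to $\varepsilon r^{-\alpha-3/2}\int_\tau^\infty t^{-\alpha-5/2}\,dt\lesssim \varepsilon(r\tau)^{-\alpha-3/2}$ via \eqref{thm<} and the elementary tail bound.

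The main obstacle is ensuring these estimates are genuinely uniform in $r$ across the three regimes $rM\ge 1$, $rN\le 1$, and the intermediate case $rM<1<rN$. The clean observation that makes everything fit together is $r\cdot\max(M,1/r)\ge 1$ in every regime, so the factor $(r\tau)^{-\alpha-3/2}$ appearing in the high-frequency estimates stays $\le 1$; symmetrically, on the low-frequency side the $r^2$ from the Taylor remainder is exactly compensated by the $(1/r)^2$ length of the integration interval. A minor bookkeeping point is that when the cutoff lies strictly inside $[M,N]$, the extra boundary contribution at $t=1/r$ produced by the high-frequency integration by parts is again small by \eqref{thm<}, since $1/r\ge M\to\infty$ there.
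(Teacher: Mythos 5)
Your argument is correct, and its overall architecture is the one the paper uses: necessity from $r=0$ and $j_\alpha(0)=1$; splitting the Cauchy remainder at $t=1/r$; handling the low-frequency piece by $j_\alpha(rt)=1+(j_\alpha(rt)-1)$, with the constant part controlled by the Cauchy tails of \eqref{necsuf} and the correction by \eqref{smallest} together with \eqref{thm<}; handling the high-frequency piece by integration by parts, with \eqref{thm<} killing the boundary terms and \eqref{thm>} the variation term, all uniform in $r$ because $r\cdot\max(M,1/r)\geq 1$. Where you genuinely deviate is in how the integration by parts is performed on $[\max(M,1/r),N]$: the paper integrates against the primitive $g^{\nu}_{\alpha,r}$ of $t^\nu j_\alpha(rt)$ and invokes the key estimate $|g^{\nu}_{\alpha,r}(t)|\lesssim t^{\nu-\alpha-1/2}r^{-\alpha-3/2}$ of Lemma~\ref{mainlemma} (built on Lemmas~\ref{primlemma1}--\ref{primlemma3}), so that only two terms appear, one controlled by \eqref{thm<} and one by \eqref{thm>}. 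You instead use the explicit antiderivative furnished by \eqref{dj}, writing $t^\nu j_\alpha(rt)$ as a multiple of $t^{\nu-2\alpha-1}\frac{d}{dt}\big(t^{2\alpha+2}j_{\alpha+1}(rt)\big)$; this produces a third term, $(\nu-2\alpha-1)\int t^\nu f(t)j_{\alpha+1}(rt)\,dt$, which you absorb by a second use of the pointwise decay \eqref{thm<} together with the bound $|j_{\alpha+1}(rt)|\lesssim (rt)^{-\alpha-3/2}$ from \eqref{est} (convergence of $\int_\tau^\infty t^{-\alpha-5/2}\,dt$ is guaranteed by $\alpha\geq -1/2$). Your route is thus more self-contained for this particular theorem — it bypasses the primitive-estimate machinery entirely — at the modest cost of leaning on \eqref{thm<} in an extra place; the paper's route, by packaging everything into Lemma~\ref{mainlemma}, yields an estimate that is reused verbatim in the proofs of Proposition~\ref{pointwise} and Theorem~\ref{sinethm}, which is what that extra machinery buys.
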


\begin{theorem} \label{sinethm}
	Let $\nu,\mu\in \R$ be such that $0<\mu+\nu\leq \alpha+3/2$. Let $f:\R_+\to \C$ be such that $t^{\nu} f(t)\in  L^1(0,1)$. If the conditions
	\begin{align}
		|f(M)| &= o\big(M^{\mu-1}\big) & \quad \text{as }M\to \infty, \label{cond1}\\
		\int_M^\infty t^{\nu-\alpha-1/2}|df(t)| 
		&= o\big(M^{\mu+\nu-\alpha-3/2}\big)  & \quad \text{as }M\to\infty. \label{cond2}
	\end{align}
	are satisfied, then $\mathcal{L}_{\nu,\mu}^\alpha f$ converges uniformly on $\R_+$.
\end{theorem}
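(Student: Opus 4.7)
The objective is to show that the Cauchy remainder
\[
I(M,N,r) := r^{\mu+\nu}\int_M^N t^\nu f(t)\, j_\alpha(rt)\, dt
\]
tends to zero as $M\to \infty$ with $N>M$, uniformly in $r > 0$. Hypothesis \eqref{cond2} forces $f$ to be locally of bounded variation on $\R_+$, so Stieltjes integration by parts is available; combined with \eqref{cond1}, for every $\varepsilon > 0$ one may fix $M_0$ so that, for $t \ge M_0$,
\[
|f(t)| \le \varepsilon\, t^{\mu-1} \qquad \text{and} \qquad \int_t^\infty s^{\nu-\alpha-1/2}|df(s)| \le \varepsilon\, t^{\mu+\nu-\alpha-3/2}.
\]
The natural splitting is at $t=1/r$: set $t_0 := \min\bigl(N, \max(M, 1/r)\bigr)$, so that $rt \le 1$ on $[M, t_0]$ (non-oscillatory regime) and $rt \ge 1$ on $[t_0, N]$ (oscillatory regime), with either sub-interval possibly empty. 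On the first part, using only $|j_\alpha(rt)| \lesssim 1$, one obtains
\[
\biggl| r^{\mu+\nu}\int_M^{t_0} t^\nu f(t) j_\alpha(rt)\, dt\biggr| \lesssim \varepsilon\, r^{\mu+\nu}\int_M^{t_0} t^{\mu+\nu-1}\, dt \le \frac{\varepsilon\, (rt_0)^{\mu+\nu}}{\mu+\nu}\le \frac{\varepsilon}{\mu+\nu},
\]
which is precisely where the strict inequality $\mu+\nu>0$ is essential.

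On $[t_0, N]$ I would apply Stieltjes integration by parts using the primitive identity $\int s^{2\alpha+1} j_\alpha(s)\, ds = \frac{s^{2\alpha+2}}{2(\alpha+1)} j_{\alpha+1}(s)$ (equivalent to $\frac{d}{dz}[z^{\alpha+1}J_{\alpha+1}(z)] = z^{\alpha+1} J_\alpha(z)$), writing $t^\nu j_\alpha(rt) = t^{\nu-2\alpha-1}\cdot t^{2\alpha+1} j_\alpha(rt)$. This yields
\[
\int_{t_0}^N\! t^\nu f(t) j_\alpha(rt)\, dt = \frac{\bigl[t^{\nu+1} j_{\alpha+1}(rt) f(t)\bigr]_{t_0}^N}{2(\alpha+1)} - \frac{\nu-2\alpha-1}{2(\alpha+1)}\!\int_{t_0}^N\! t^\nu j_{\alpha+1}(rt) f(t)\,dt - \frac{1}{2(\alpha+1)}\!\int_{t_0}^N\! t^{\nu+1} j_{\alpha+1}(rt)\,df(t).
\]
Using the uniform bound $z^{\alpha+3/2}|j_{\alpha+1}(z)|\lesssim 1$, multiplication by $r^{\mu+\nu}$ converts the boundary contributions into $\lesssim \varepsilon (rt)^{\mu+\nu-\alpha-3/2}\le \varepsilon$ (because $rt\ge 1$ and the exponent is $\le 0$); the $df$-integral is bounded by $r^{\mu+\nu-\alpha-3/2}\int_{t_0}^\infty t^{\nu-\alpha-1/2}|df| \lesssim \varepsilon(rt_0)^{\mu+\nu-\alpha-3/2}\le \varepsilon$ via \eqref{cond2}; and the remaining $j_{\alpha+1}$-integral against $f$ admits the crude absolute bound $\lesssim \varepsilon r^{\mu+\nu-\alpha-3/2}\int_{t_0}^N t^{\mu+\nu-\alpha-5/2}\,dt$, which is $\lesssim \varepsilon/(\alpha+3/2-\mu-\nu)$ as long as $\mu+\nu < \alpha+3/2$.

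The main obstacle is the boundary case $\mu+\nu=\alpha+3/2$, where this last bound degenerates into $\varepsilon\log(N/t_0)$ and fails. I would resolve this by iterating the integration by parts once more on the $j_{\alpha+1}$-integral, using $\int s^{2\alpha+3} j_{\alpha+1}(s)\, ds = \frac{s^{2\alpha+4}}{2(\alpha+2)} j_{\alpha+2}(s)$; the sharper estimate $z^{\alpha+5/2}|j_{\alpha+2}(z)|\lesssim 1$ renders every resulting piece absolutely convergent at the critical exponent. Indeed the new boundary and the new $f$-integral each become $\lesssim \varepsilon (rt_0)^{-1}\le \varepsilon$, and the new $df$-integral is controlled via $\int_{t_0}^N t^{\nu-\alpha-3/2}|df| \le t_0^{-1}\int_{t_0}^\infty t^{\nu-\alpha-1/2}|df|$ combined with \eqref{cond2}, again giving $\lesssim \varepsilon(rt_0)^{-1}\le \varepsilon$. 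Combining the non-oscillatory estimate with these oscillatory-regime estimates yields $|I(M,N,r)| \lesssim \varepsilon$ uniformly in $r > 0$ once $M$ is large enough, proving uniform convergence of $\mathcal{L}^\alpha_{\nu,\mu}f$ on $\R_+$.
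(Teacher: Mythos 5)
Your argument is correct, and its skeleton coincides with the paper's: split the Cauchy remainder at $t=1/r$, bound the non-oscillatory piece by $|j_\alpha|\lesssim 1$ together with \eqref{cond1} and the strict positivity of $\mu+\nu$, and integrate by parts on the oscillatory piece so that \eqref{cond1} controls the boundary terms and \eqref{cond2} the variation term. The difference lies in how the oscillatory regime is handled. The paper integrates by parts once against the exact primitive $g^\nu_{\alpha,r}$ of $t^\nu j_\alpha(rt)$ from \eqref{primformula} and invokes the single estimate \eqref{Primest} of Lemma~\ref{mainlemma}, which covers the whole range $0<\mu+\nu\leq\alpha+3/2$ in one stroke, endpoint included. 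You instead integrate by parts via the recurrence \eqref{dj}, writing $t^\nu j_\alpha(rt)=t^{\nu-2\alpha-1}\cdot t^{2\alpha+1}j_\alpha(rt)$; this produces an extra term pairing $f$ with $j_{\alpha+1}$, which is harmless for $\mu+\nu<\alpha+3/2$ but degenerates logarithmically at $\mu+\nu=\alpha+3/2$, and you correctly repair it by a second integration by parts up to $j_{\alpha+2}$. In effect you have re-derived, inline and interleaved with $f$, the content of Lemma~\ref{primlemma1} (with $k=1$, and $k=2$ at the endpoint) and of Lemma~\ref{mainlemma} — note that the paper's proof of that lemma needs the very same $k=2$ step in the critical case $\nu=\alpha+1/2$. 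What the paper's route buys is a reusable primitive estimate (also used for Theorem~\ref{thm1} and Proposition~\ref{pointwise}) and a shorter main proof; what yours buys is a self-contained argument using only \eqref{dj} and \eqref{est}. Two cosmetic points: \eqref{cond2} only controls the variation of $f$ near infinity — local bounded variation on $(0,\infty)$ is the paper's standing assumption, which you should cite rather than deduce — and uniformity on all of $\R_+$ is immediate since the remainder vanishes identically at $r=0$ when $\mu+\nu>0$.
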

Observe that conditions \eqref{thm<} and \eqref{thm>} are the same as \eqref{cond1} and \eqref{cond2}, respectively, for the particular case $\mu=-\nu$.
			
The theorems above generalize the following results obtained by Dyachenko, Liflyand and Tikhonov (\cite{DLT}).
\begin{thmletter}\label{cosuc}
	Let $f\in L^1(0,1)$ be vanishing at infinity and such that
	\begin{equation}
		\label{vanishingvariation}
		\int_M^\infty  |df(t)| =o\big(M^{-1}\big) \quad \text{as }M\to \infty.
	\end{equation}
	Then, $$\int_0^\infty f(t)\cos rt\, dt$$
	converges uniformly if and only if $\int_0^\infty f(t)\, dt$ converges.
\end{thmletter}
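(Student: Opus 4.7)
The plan is to dispatch necessity immediately and to attack sufficiency through the uniform Cauchy criterion. Necessity is the easy direction: if $\int_0^\infty f(t)\cos(rt)\,dt$ converges uniformly on $[0,\infty)$, then in particular it converges at $r=0$, which is precisely the assertion that $\int_0^\infty f(t)\,dt$ converges.

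For sufficiency I would establish
\[
\sup_{r \geq 0}\left|\int_M^N f(t)\cos(rt)\,dt\right| \to 0 \quad \text{as } M,N\to\infty,
\]
splitting $r$ according to whether $r \geq 1/M$ or $r < 1/M$. Fix $\varepsilon > 0$ and choose $M_0$ large enough that three auxiliary estimates hold for $M \geq M_0$: (i) $|\int_M^T f|<\varepsilon$ for every $T>M$, from convergence of $\int_0^\infty f$; (ii) $M\int_M^\infty|df|<\varepsilon$ directly from \eqref{vanishingvariation}; and, as a consequence of (ii) combined with $f(\infty)=0$, (iii) $|f(t)|\leq \int_t^\infty|df|<\varepsilon/t$ for $t \geq M$.

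In the high-frequency regime $r \geq 1/M$, I would integrate by parts once using the antiderivative $\sin(rt)/r$. The boundary and interior terms each take the form of quantities bounded by (ii) or (iii) divided by $r \geq 1/M$; since $1/r \leq M$, the aggregate is controlled by a small multiple of $\varepsilon$. In the low-frequency regime $r < 1/M$, I would further partition $[M,N]$ at $1/r$ when $M < 1/r < N$: the piece $[1/r,N]$ falls under the high-frequency case with $1/r$ in place of $M$, while on $[M,\min(N,1/r)]$, where $rt \leq 1$, I would use the additive split
\[
\int f(t)\cos(rt)\,dt = \int f(t)\,dt + \int f(t)[\cos(rt) - 1]\,dt.
\]
The first integral is bounded by $\varepsilon$ via (i); the second is controlled by combining the Taylor bound $|\cos(rt)-1|\leq(rt)^2/2$ with (iii), yielding an estimate of order $r^2\int_M^{1/r}\varepsilon\,t\,dt\lesssim\varepsilon$.

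The main obstacle is precisely the low-frequency regime, where a direct integration by parts generates an unwanted factor of $1/r$ that blows up as $r\downarrow 0$ and has no counterweight in the hypotheses. The additive decomposition $\cos = 1 + (\cos - 1)$ is the key trick: it shifts the $r$-independent content onto the hypothesis that $\int_0^\infty f$ converges, while the Taylor residue is exploited only on $t \leq 1/r$ where it is genuinely small. The bulk of a careful write-up is reconciling the two regimes at the threshold $r = 1/M$ and aggregating constants, rather than any individual estimate.
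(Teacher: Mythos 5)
Your proposal is correct and follows essentially the same route as the paper's proof (of Theorem~\ref{thm1}, of which this is the case $\alpha=-1/2$, $\nu=\mu=0$): necessity from convergence at $r=0$, and for sufficiency the same split of the Cauchy remainder at $r=1/M$, integration by parts with the antiderivative $\sin(rt)/r$ in the high-frequency regime, and in the low-frequency regime the further split at $t=1/r$ together with the decomposition $\cos(rt)=1+(\cos(rt)-1)$ and the bound $|\cos(rt)-1|\lesssim (rt)^2$ (the paper's estimate \eqref{smallest}), using $|f(t)|\leq\int_t^\infty|df|$ exactly as you do.
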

\begin{thmletter}\label{sinuc}
	Let $f$ be vanishing at infinity and such that $tf(t)\in L^1(0,1)$, and assume \eqref{vanishingvariation} holds. Then, $$\int_0^\infty f(t)\sin rt\, dt$$ converges uniformly.
\end{thmletter}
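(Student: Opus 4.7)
The plan is to show that the Cauchy remainder $\int_M^N f(t)\sin rt\,dt$ tends to $0$ uniformly in $r\in[0,\infty)$ as $N>M\to\infty$. At $r=0$ the integrand vanishes identically, so assume $r>0$. A preliminary step I would carry out is to extract a pointwise decay estimate on $f$ itself: since $f$ vanishes at infinity, $f(M)=-\int_M^\infty df(t)$, and \eqref{vanishingvariation} then yields
$$|f(M)|\leq\int_M^\infty|df(t)|=o(M^{-1}).$$
This estimate is the crucial resource that will offset the $1/r$ factor produced by integration by parts when $r$ is small.

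I would next split into two regimes according to whether $rM\geq 1$ or $rM<1$. If $rM\geq 1$ (i.e., $1/r\leq M$), integrate by parts directly:
$$\int_M^N f(t)\sin rt\,dt=\Big[-\tfrac{\cos rt}{r}f(t)\Big]_M^N+\tfrac{1}{r}\int_M^N\cos rt\,df(t),$$
whose modulus is bounded by $r^{-1}\bigl(|f(M)|+|f(N)|+\int_M^\infty|df(t)|\bigr)$. Using $r^{-1}\leq M$ together with $|f(M)|+\int_M^\infty|df(t)|=o(M^{-1})$ and $|f(N)|=o(N^{-1})\leq o(M^{-1})$, I obtain an $o(1)$ bound uniform in $r$.

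If instead $rM<1$, I would split at $t_0:=\min(1/r,N)$. On $[M,t_0]$ the Taylor estimate $|\sin rt|\leq rt$ and the bound $t|f(t)|\leq\sup_{s\geq M}s|f(s)|=o(1)$ give
$$\Big|\int_M^{t_0}f(t)\sin rt\,dt\Big|\leq r(t_0-M)\sup_{s\geq M}s|f(s)|\leq\sup_{s\geq M}s|f(s)|=o(1),$$
since $r(t_0-M)\leq rt_0\leq 1$. On the remaining piece $[1/r,N]$ (present only when $1/r<N$), I integrate by parts as in the previous regime, now using $1/r>M$, so that $|f(1/r)|=o(r)$ and $\int_{1/r}^\infty|df(t)|=o(r)$; the factor $r^{-1}$ then again produces an $o(1)$ bound, uniform in $r$.

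The main obstacle is conceptual rather than technical: one must calibrate the split point so that the Taylor estimate $|\sin rt|\leq rt$ and integration by parts are each deployed on the subinterval where they are sharp. The crucial feature is that the hypothesis \eqref{vanishingvariation} is \emph{precisely} strong enough to furnish the decay $|f(M)|=o(M^{-1})$ that cancels the $1/r$ produced by integration by parts in both sub-regimes; this is what allows the sine case to succeed without the additional hypothesis on the convergence of $\int_0^\infty f(t)\,dt$ that is needed in the cosine case of Theorem~\ref{cosuc}.
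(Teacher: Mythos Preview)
Your proof is correct and follows essentially the same approach as the paper: the paper derives Theorem~\ref{sinuc} as the special case $\alpha=1/2$, $\nu=1$, $\mu=0$ of Theorem~\ref{sinethm}, whose proof uses exactly your split at $1/r$, with integration by parts against the primitive of $t^\nu j_\alpha(rt)$ (here simply $-\cos(rt)/r$) on the tail and the bound $|j_\alpha|\leq 1$ (here $|\sin(rt)|\leq rt$) on $[M,1/r]$. Your preliminary step extracting $|f(M)|=o(M^{-1})$ from \eqref{vanishingvariation} corresponds to the paper's observation that for vanishing $f$ condition~\eqref{cond2} implies~\eqref{cond1}.
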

Note that Theorems~\ref{cosuc} and \ref{sinuc} are particular cases of Theorems~\ref{thm1} and \ref{sinethm}, whenever $\alpha=-1/2$, $\nu=\mu=0$ and $\alpha=1/2$, $\nu=1$, $\mu=0$, respectively. Note that if $f$ vanishes at infinity, \eqref{thm>} and \eqref{cond2} imply \eqref{thm<} and \eqref{cond1}, respectively. In fact, for functions vanishing at infinity, conditions \eqref{thm<} and \eqref{cond1} may be redundant for certain parameters, thus we present alternative statements to those of Theorems~\ref{thm1}~and~\ref{sinethm} (namely, Theorems~\ref{costhmvanish}~and~\ref{sinethmvanish}, respectively).

In view of the respective relationship of Theorems~\ref{thm1}~and~\ref{sinethm} with Theorems~\ref{cosuc}~and~\ref{sinuc}, we will call  $\mathcal{L}^{\alpha}_{\nu,\mu}f$ with $\nu=-\mu$ (or simply $\mathcal{L}^{\alpha}_{\nu,-\nu}f$) \emph{cosine-type} transforms, and  $\mathcal{L}_{\nu,\mu}^{\alpha}f$ with $0<\mu+\nu\leq \alpha+3/2$ \emph{sine-type} transforms. 

We present a picture showing the range of the parameters $\mu$ and $\nu$ for which $\mathcal{L}^\alpha_{\nu,\mu}$ is a sine or cosine-type transform, given a fixed $\alpha>-1/2$. 

\begin{center}
	\begin{tikzpicture}
	\begin{axis}[axis lines=middle, 
	xlabel={$\nu$}, 
	ylabel={$\mu$}, 
	ymin=-6.5,
	ymax=6.5,
	xmin=-6.5,
	xmax=6.5,
	ticks=none]
	\draw[name path=1, dashed] (-3.75,3.75) -- (3.75,-3.75);
	\addplot[name path=2, color=black, domain=-2.75:4.75] { -x+2.5};
	\addplot[gray, fill opacity=0.5] fill between[of=1 and 2];
	\fill (3,-3) circle[radius=2pt];
	\fill (1.5,0) circle[radius=2pt];
	\node at (axis cs:3,-3) [pin=-90:{\small $(2\alpha+1,-2\alpha-1)$}] {};
	\node at (axis cs: 1.5,0) [pin=80:{\small $\big(\alpha+1/2,0\big)$}] {};
	\node at (axis cs:-4.5,1.75) [pin={[pin distance=0cm] {\small $\mu+\nu=0$}},inner sep=1pt] {};
	\node at (axis cs:-2.5,5) [pin={[pin distance=0cm] {\small $\mu+\nu = \alpha+3/2$}},inner sep=1pt] {};
	\node at (axis cs:4,5) [rectangle, draw=black] {$\alpha>-1/2$};
	\end{axis}
	\end{tikzpicture}	
\end{center}
Every point on the dashed line $\mu=-\nu$ corresponds to a cosine-type transform, and the point $(2\alpha+1,-2\alpha-1)$ lying on such line represents the Hankel transform of order $\alpha$. The area between the dashed line $\mu=-\nu$ (not included) and the line $\mu+\nu=\alpha+3/2$ (included) corresponds to the sine-type transforms. 

The extreme case $\alpha=-1/2$ is the only choice of $\alpha$ for which the operator $\mathcal{L}^{\alpha}_{\alpha+1/2,0}$ does not correspond to a sine-type transform, since $\mathcal{L}^{-1/2}_{0,0}f = \hat{f}_{\cos}$.

For every point of the plane outside the grey strip $0\leq \mu+\nu\leq \alpha+3/2$, we  give sufficient conditions on $f$ that guarantee the pointwise convergence of $\mathcal{L}^\alpha_{\nu,\mu}f$, as well as the uniform convergence on certain subintervals of $[0,\infty)$ (see Section~\ref{sectionpointwise}).

Any function $f$ we consider in this work is complex-valued and defined on $\R_+$, unless otherwise specified (here $\R_+:=[0,\infty)$). We also assume $f$ is locally of bounded variation and locally integrable on $(0,\infty)$. By $f\lesssim g$ and $f\gtrsim g$ we mean that there exist positive constants $C,C'$ such that $f\leq Cg$ and $f\geq C'g$, respectively, and we write $f\asymp g$ if $f\lesssim g$ and $f\gtrsim g$ simultaneously.

The paper is organized in the following way. In Section~\ref{sec2} we present the basic concepts that we will use. Subsection~\ref{subsecbessel} is devoted to the Bessel functions; first we list several of their known properties, and we obtain estimates of integrals containing $j_\alpha$. We emphasize that Lemma~\ref{mainlemma} provides the key estimate to be used throughout this work.  In Subsection~\ref{subsecgm} we 
define the class of \emph{general monotone} ($GM$) functions. To give a flavour, we use the $GM$ property to generalize the following results (that follow from Theorems~\ref{cosuc}~and~\ref{sinuc}, respectively; see also \cite{Moricz-Monotone}):
\begin{thmletterprime}
	If $f\in GM$ and $f\in L^1(0,1)$, then 
	$$\hat{f}_{\cos} \quad \text{converges uniformly if and only if}\quad  \bigg|\int_0^\infty f(t)\, dt\bigg|<\infty.
	$$	
\end{thmletterprime}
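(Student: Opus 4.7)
The equivalence splits into a trivial necessity and a more substantial sufficiency, both of which I would handle by reducing to Theorem~A.

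For \emph{necessity}: uniform convergence of $\hat{f}_{\cos}$ on $\R_+$ implies pointwise convergence at $r=0$, and since $\cos(0)=1$, the pointwise limit there is $\int_0^\infty f(t)\,dt$. Hence this improper integral is finite.

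For \emph{sufficiency}, the plan is to verify the two hypotheses of Theorem~A that are not directly assumed, namely
\begin{enumerate}[label=(\roman*)]
  \item $f(M)\to 0$ as $M\to\infty$;
  \item $\int_M^\infty |df(t)| = o(M^{-1})$ as $M\to\infty$.
\end{enumerate}
The main effort goes into (ii). I would split the tail variation dyadically,
\[
\int_M^\infty |df(t)| = \sum_{k\geq 0}\int_{2^kM}^{2^{k+1}M}|df(t)|,
\]
apply the defining $GM$ inequality on each block (which bounds $\int_{2^kM}^{2^{k+1}M}|df(t)|$ by a constant times $(2^kM)^{-1}$ times a local $L^1$-mean of $f$ on a comparable enlarged interval), and sum. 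This reduces (ii) to the statement that a suitable weighted $L^1$-tail of $f$ of the form $\int_{M/\lambda}^\infty |f(t)|\,t^{-1}\,dt$ tends to zero as $M\to\infty$. Granting (ii), the existence of $\lim_{M\to\infty}f(M)$ follows, and the convergence hypothesis $|\int_0^\infty f|<\infty$ forces this limit to be $0$, giving (i).

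The main obstacle is thus to deduce the tail decay of $\int|f|/t$ from the convergence of the (possibly sign-changing or complex) improper integral $\int_0^\infty f$. Here the $GM$ hypothesis is essential: on each dyadic block, $GM$ controls the oscillation of $f$ in terms of its local $L^1$-mean, which in turn forces the local $L^1$-norm of $f$ to be comparable (up to the $GM$ constant) with the absolute value of $\int_{\mathrm{block}} f$, modulo a variation correction. The Cauchy criterion for $\int_0^\infty f$ then makes these block-averages arbitrarily small on far-out blocks, and a dyadic summation (with controlled overlap of the enlarged intervals, and bootstrapping the variation correction against itself) closes the estimate. With (i) and (ii) established, Theorem~A directly yields the uniform convergence of $\hat{f}_{\cos}$.
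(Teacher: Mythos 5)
Your overall architecture is the same as the paper's (this statement is the case $\nu=0$, $\alpha=-1/2$ of Corollary~\ref{CORcosgm}): necessity from convergence at $r=0$, sufficiency by using the $GM$ property to verify the hypotheses of Theorem~A (equivalently Theorem~\ref{thm1}). The gap sits exactly at the step you flag as the main obstacle. First, a quantitative slip: the dyadic $GM$ estimate gives
\[
\int_M^\infty |df(t)| \;\lesssim\; \int_{M/\lambda}^\infty \frac{|f(t)|}{t}\,dt,
\]
so to obtain (ii) you need this right-hand side to be $o(M^{-1})$, not merely to tend to zero as you state; equivalently you need $\sup_{x\ge M}x|f(x)|\to 0$, i.e.\ condition \eqref{thm<} with $\nu=0$. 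Second, and more seriously, your proposed derivation of this decay from the bare convergence of the improper integral $\int_0^\infty f$ does not close. On a block $[a,2a]$, the triangle inequality plus the $GM$ condition give only
\[
\int_a^{2a}|f(t)|\,dt \;\le\; \bigg|\int_a^{2a}f(t)\,dt\bigg| + a\int_a^{2a}|df(t)| \;\le\; \bigg|\int_a^{2a}f(t)\,dt\bigg| + C\int_{a/\lambda}^{\lambda a}|f(t)|\,dt,
\]
and since $C>1$ and the ``variation correction'' is taken over a strictly larger interval, it dominates the quantity you are trying to control; there is nothing to absorb it into, and the suggested ``bootstrapping the variation correction against itself'' is not an argument.

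What you are attempting to prove inline is precisely the paper's Lemma~\ref{THM-convergence-GM} (a $GM$ analogue of the Abel--Olivier test: if $g\in GM$ is real-valued and $\int_0^\infty g$ converges, then $tg(t)\to 0$), which the paper invokes with $g=t^\nu f$ and explicitly defers as technical. Note also that this lemma is claimed only for real-valued $f$; the remark following Corollary~\ref{CORcosgm} handles complex-valued $f$ only under the additional hypothesis \eqref{thm<}, whereas your sketch purports to cover ``sign-changing or complex'' $f$ by the same bootstrap --- a further indication that the argument as written cannot be complete. Granting $tf(t)\to 0$, the remainder of your reduction (the dyadic estimate yielding \eqref{thm>}, the conclusion $f(M)\to 0$, and the appeal to Theorem~A) is correct and coincides with the paper's proof; the missing ingredient is a genuine proof of that lemma or an explicit citation of it as an external result.
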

\begin{thmletterprime}
	If $f\in GM$ and $tf(t)\in L^1(0,1)$, then
	$$
	\hat{f}_{\sin} \quad \text{converges uniformly if and only if}\quad t|f(t)|\to 0 \quad \text{as }\to \infty.
	$$
\end{thmletterprime}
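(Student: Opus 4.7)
My plan is to prove the two implications separately. Sufficiency will reduce to Theorem B via the $GM$ property, while necessity comes from analyzing the Cauchy remainder at a well-chosen low frequency together with a $GM$-driven iteration.

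For sufficiency, assume $f\in GM$ and $t|f(t)|\to 0$ as $t\to\infty$. The latter assumption makes $f$ vanish at infinity, so by Theorem B it suffices to verify $\int_M^\infty |df(t)| = o(M^{-1})$. I split the tail dyadically,
$$\int_M^\infty |df(t)| = \sum_{k=0}^\infty \int_{2^kM}^{2^{k+1}M} |df(t)|,$$
and apply the $GM$ property to each block, which bounds it by (a multiple of) an integral mean of $|f|/t$ over a slightly dilated interval. Using $t|f(t)|\leq\varepsilon$ for $t$ large, the $k$-th term is $O(\varepsilon/(2^k M))$, so the whole sum is $O(\varepsilon/M) = o(1/M)$, and Theorem B yields uniform convergence.

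For necessity, assume $\hat{f}_{\sin}$ converges uniformly, so the Cauchy remainder $\int_M^N f(t)\sin(rt)\,dt$ tends to $0$ uniformly in $r$ as $M<N\to\infty$. Specializing to $r_M=\pi/(2M)$ and $N=2M$, the kernel $\sin(r_M t)$ is nonnegative on $[M,2M]$ with $\int_M^{2M}\sin(r_M t)\,dt = 2M/\pi$. Writing $f(t)=f(M)+\int_M^t df(s)$ and swapping the order of integration gives
$$\int_M^{2M} f(t)\sin(r_M t)\,dt = \frac{2M}{\pi}f(M) + O\bigg(M\int_M^{2M} |df(t)|\bigg).$$
The $GM$ property controls $M\int_M^{2M}|df|$ in terms of an integral mean of $h(t):=t|f(t)|$ over a dilated interval; combining this with the uniform smallness of the left-hand side and running the resulting recursion across dyadic scales forces $M|f(M)| = o(1)$. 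Comparability of $|f|$ within dyadic scales (another consequence of $GM$) then upgrades this to $t|f(t)|\to 0$ continuously in $t$.

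The main obstacle lies in the necessity step: the error term $M\int_M^{2M}|df|$ is a priori of the same order as the main term $M|f(M)|$, and the constant coming out of the $GM$ definition need not be small. Closing the argument requires exploiting the uniform smallness of the Cauchy remainder at all dyadic scales simultaneously, in a Gronwall-type recursion whose contraction factor can be controlled by the dilation parameter in the $GM$ definition. One must also verify that uniform convergence is strong enough to ensure $h(t)$ is globally bounded, providing the initial data for the recursion.
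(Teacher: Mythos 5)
Your sufficiency argument is correct and is essentially the paper's own: the dyadic splitting of the tail and the $GM$ inequality on each block turn the hypothesis $t|f(t)|\to 0$ into $\int_M^\infty |df(t)|=o(1/M)$, after which Theorem~\ref{sinuc} (equivalently, Theorem~\ref{sinethm} with $\alpha=1/2$, $\nu=1$, $\mu=0$) gives uniform convergence; this is exactly the computation carried out in the first part of the proof of Theorem~\ref{sinegm}.

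The necessity half has a genuine gap, and it is the one you flag yourself. Testing against $r_M=\pi/(2M)$ on $[M,2M]$ gives a main term $\frac{2M}{\pi}|f(M)|$ and an error of size $M\int_M^{2M}|df(t)|$, and the $GM$ inequality only bounds the latter by $C'\sup_{M/\lambda\leq x\leq \lambda M}x|f(x)|$, where $C'$ and $\lambda$ come from Definition~\ref{defgm}: they are fixed by the hypothesis $f\in GM$, need not be small, and cannot be tuned. Hence the proposed ``Gronwall-type recursion'' has no contraction factor and does not close; moreover the global boundedness of $t|f(t)|$, which you need as initial data, is also left unproved. The paper's necessity argument is entirely different and avoids any recursion: working under a sign assumption ($f\geq 0$, cf. Theorem~\ref{sinegm}(2) and Corollary~\ref{corgmsine}), it takes the Cauchy remainder over the window $[1/(\lambda r),\lambda/r]$, on which $\sin(rt)\asymp 1$, so that for nonnegative $f$ the remainder is comparable to $\int_{1/(\lambda r)}^{\lambda/r}f(t)\,dt$; the pointwise $GM$ estimate \eqref{gmest} then yields $\tfrac1r f(1/r)\lesssim \int_{1/(\lambda r)}^{\lambda/r}f(t)\,dt\to 0$ as $r\to 0$, i.e.\ $tf(t)\to 0$. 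In other words, positivity (or some substitute, such as the real-valued argument via Lemma~\ref{THM-convergence-GM} used for the cosine-type case) is what replaces your recursion; without such an ingredient the ``only if'' direction is not established by your argument.
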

In Section~\ref{sectionpointwise} we obtain sufficient conditions for the pointwise convergence of \eqref{generaloperator} in the whole range of parameters, and for its uniform convergence on subintervals of $\R_+$, using both integrability of the functions and conditions on their variation. In Sections~\ref{secu.c.}~and~\ref{sineu.c.} we study the uniform convergence of cosine-type and sine-type transforms, respectively. The hypotheses used in such sections mainly depend on variation conditions of $f$. We also give the corresponding statements for $GM$ functions. To conclude Section~\ref{sineu.c.}, we give several examples showing the sharpness of the obtained results, and compare the sufficient conditions obtained in Section~\ref{sectionpointwise} (namely Corollary~\ref{roughcorol}) with those of Theorems~\ref{sinethm}~and~\ref{sinethmvanish}. 

	\section{Preliminary concepts}\label{sec2}
	\subsection{Bessel functions}\label{subsecbessel}
	\textbf{Basic properties}.	Here we list several properties of the normalized Bessel function $j_\alpha(z)$, which can be found in \cite[Chapter VII]{EMOT}. In what follows we will assume $z\in \R_+$. We start with the representation by power series:
	\begin{equation*}\label{bessel}
	j_\alpha(z) = \Gamma(\alpha+1)\sum_{n=0}^\infty \frac{(-1)^n (z/2)^{2n}}{n!\Gamma(n+\alpha+1)}.
	\end{equation*}
	Such series converges uniformly and absolutely on any bounded interval. In particular, for $z\leq 1$,
	\begin{equation}\label{smallest}
	|1-j_\alpha(z)| \leq Cz^2,
	\end{equation}
	with $C< 1$, and therefore
	\begin{equation}
	\label{smallz}
	j_{\alpha}(z)\asymp 1.
	\end{equation}
	Moreover, we have the following asymptotic estimate (cf. \cite{SWfourier}):
	\begin{equation}\label{expansionatinfty}
	j_\alpha(z) = \frac{C_\alpha}{z^{\alpha+1/2}}\cos\bigg(z-\frac{\pi(\alpha+1/2)}{2}\bigg) + O\big(z^{-\alpha-3/2}\big), \quad z\to \infty,
	\end{equation}
	and since $|j_\alpha(z)|\leq j_\alpha(0)=1$ for all $z>0$, then
	\begin{equation}\label{est}
	|j_\alpha(z)|\lesssim \min\bigg\{1,\frac{1}{z^{\alpha+1/2}}\bigg\} \quad \text{for all }z>0.
	\end{equation}	
	Finally, we have the following property concerning the derivatives of $j_\alpha$:
	\begin{equation}\label{dj}
	\frac{d}{dz}\Big( z^{2\alpha+2}j_{\alpha+1}(z)\Big) = (2\alpha+2)z^{2\alpha+1}j_\alpha(z), \quad \alpha\geq -1/2,
	\end{equation}
	from which we deduce
	\begin{equation}\label{dj2}
	\frac{d}{dz} j_{\alpha+1}(z) = \frac{2\alpha+2}{z}\big(j_\alpha(z) -j_{\alpha+1}(z)\big), \quad \alpha\geq -1/2.
	\end{equation}\newline
	
	\noindent \textbf{Auxiliary lemmas}. We will also need upper estimates for the primitive function of $t^\nu j_\alpha(rt)$. We start by rewriting $\int_M^N t^\nu j_\alpha(rt)\, dt$ in terms of higher order Bessel functions. 
	\begin{lemma}\label{primlemma1}
		Let $\alpha\geq -1/2$, $r>0$ and $0<M<N$. Then, for any $k\geq 1$  and $\nu\in \R$ such that $\nu \neq 2(\alpha+\ell)+1$ with $\ell=0,\ldots ,k-1$, 
		\begin{align}
		\int_M^N t^\nu j_\alpha(rt)\, dt & = \sum_{i=1}^kC_{i,\nu,\alpha}\big( N^{\nu+1}j_{\alpha+i}(rN)-M^{\nu+1}j_{\alpha+i}(rM)\big) \nonumber \\
		&\phantom{=}  + C'_{k,\nu,\alpha}\int_M^N t^\nu j_{\alpha+k}(rt)\, dt,\label{prim}
		\end{align}
		where the constants $C_{i,\nu,\alpha}$, $C'_{k,\nu,\alpha}$ are nonzero.
	\end{lemma}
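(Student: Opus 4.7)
The plan is to prove \eqref{prim} by induction on $k$, where the only analytical input is integration by parts via the differentiation identity \eqref{dj}.

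For the base case $k=1$, I would write $t^{\nu} j_\alpha(rt) = t^{\nu-2\alpha-1}\cdot t^{2\alpha+1}j_\alpha(rt)$ and invoke \eqref{dj} composed with the chain rule (substituting $z=rt$) to obtain
$$
\frac{d}{dt}\bigl( t^{2\alpha+2} j_{\alpha+1}(rt)\bigr) = (2\alpha+2)\, t^{2\alpha+1} j_\alpha(rt).
$$
Integration by parts with $u=t^{\nu-2\alpha-1}$ and $dv = t^{2\alpha+1}j_\alpha(rt)\,dt$ then produces
$$
\int_M^N t^\nu j_\alpha(rt)\,dt = \frac{1}{2\alpha+2}\bigl[t^{\nu+1}j_{\alpha+1}(rt)\bigr]_M^N - \frac{\nu-2\alpha-1}{2\alpha+2}\int_M^N t^\nu j_{\alpha+1}(rt)\,dt,
$$
which is \eqref{prim} for $k=1$ with $C_{1,\nu,\alpha}=\frac{1}{2\alpha+2}$ and $C'_{1,\nu,\alpha}=-\frac{\nu-2\alpha-1}{2\alpha+2}$; the hypothesis $\nu\neq 2\alpha+1$ (the $\ell=0$ case) is exactly what ensures $C'_{1,\nu,\alpha}\neq 0$, while $2\alpha+2>0$ since $\alpha\geq -1/2$.

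For the inductive step, assume \eqref{prim} holds for some $k\geq 1$ and apply the same procedure to the residual integral $\int_M^N t^\nu j_{\alpha+k}(rt)\,dt$, this time using the shifted identity
$$
\frac{d}{dt}\bigl(t^{2(\alpha+k)+2}j_{\alpha+k+1}(rt)\bigr) = \bigl(2(\alpha+k)+2\bigr)\,t^{2(\alpha+k)+1}j_{\alpha+k}(rt).
$$
This yields a new boundary contribution of the required shape $C_{k+1,\nu,\alpha}\bigl(N^{\nu+1}j_{\alpha+k+1}(rN)-M^{\nu+1}j_{\alpha+k+1}(rM)\bigr)$ together with an integral $\int_M^N t^\nu j_{\alpha+k+1}(rt)\,dt$ multiplied by a factor proportional to $\nu-2(\alpha+k)-1$. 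Substituting back into the inductive formula and collecting the $i=1,\dots,k+1$ boundary terms gives \eqref{prim} at level $k+1$. The hypothesis $\nu\neq 2(\alpha+k)+1$ ensures the new remainder coefficient $C'_{k+1,\nu,\alpha}$ is nonzero, and the same assumption applied iteratively ($\ell=0,\dots,k-1$) shows that each of the previously generated $C_{i,\nu,\alpha}$ (which is a product of ratios of the form $\frac{1}{2(\alpha+i)+2}$ and $-\frac{\nu-2(\alpha+j)-1}{2(\alpha+j)+2}$) remains nonzero.

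The proof is essentially bookkeeping: there is no analytical obstacle, because at every iteration the denominator $2(\alpha+j)+2$ is strictly positive (as $\alpha\geq -1/2$) and the condition $\nu\neq 2(\alpha+\ell)+1$ is precisely what the author built into the statement to guarantee that no intermediate constant collapses to zero. The main care is organizational: writing the constants $C_{i,\nu,\alpha}$ and $C'_{k,\nu,\alpha}$ in closed product form so that the nonvanishing claim can be read off transparently from the hypotheses.
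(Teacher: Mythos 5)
Your proof is correct and follows essentially the same route as the paper: induction on $k$, rewriting $t^\nu j_{\alpha+k}(rt)=t^{\nu-2(\alpha+k)-1}\,t^{2(\alpha+k)+1}j_{\alpha+k}(rt)$ and integrating by parts via \eqref{dj}, with the same constants $C_{1,\nu,\alpha}=\frac{1}{2\alpha+2}$, $C'_{1,\nu,\alpha}=-\frac{\nu-2\alpha-1}{2\alpha+2}$ and the same recursion for $C_{k+1,\nu,\alpha}$, $C'_{k+1,\nu,\alpha}$. The bookkeeping of why the hypothesis $\nu\neq 2(\alpha+\ell)+1$ keeps all constants nonzero matches the paper's argument as well.
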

	\begin{proof}
		We prove this statement by induction on $k$. For $k=1$, we can rewrite the integral on the left hand side of \eqref{prim} as $\int_M^N t^{\nu-2\alpha-1} t^{2\alpha+1}j_\alpha(rt)\, dt$,
		and the result follows after integrating by parts together with \eqref{dj}. In this case we have $C_{1,\nu,\alpha} = \frac{1}{2\alpha+2}$ and $C'_{1,\nu,\alpha}=-\frac{\nu-2\alpha-1}{2\alpha+2}$.
		
		If \eqref{prim} holds for some $k\geq 1$, since
		$$
		C'_{k,\nu,\alpha}\int_M^N t^\nu j_{\alpha+k}(rt)\, dt= C'_{k,\nu,\alpha}\int_M^N t^{\nu-2(\alpha+k)-1} t^{2(\alpha+k)+1} j_{\alpha+k}(rt)\, dt,
		$$
		the result follows similarly as before, where in this case we obtain $C_{k+1,\nu,\alpha}= \frac{C'_{k,\nu,\alpha}}{2(\alpha+k)+2}$ and $C'_{k+1,\nu,\alpha} = -C'_{k,\nu,\alpha}\frac{\nu -2(\alpha+k)-1}{2(\alpha+k)+2}$.
	\end{proof}
	\begin{lemma} \label{primlemma2}
	Under the assumptions of Lemma~\ref{primlemma1}, we have, for any $\nu \in \R$ such that $\nu =2(\alpha+\ell)+1$ with some $\ell \in \N\cup\{0\}$,
	$$
	\int_M^N t^\nu j_\alpha(rt)\, dt = \sum_{i=1}^{\ell+1}C_{i,\nu,\alpha}\big( N^{\nu+1}j_{\alpha+i}(rN)-M^{\nu+1}j_{\alpha+i}(rM)\big),
	$$
	where all the constants $C_{i,\nu,\alpha}$ coincide with those of Lemma~\ref{primlemma1}. 
	\end{lemma}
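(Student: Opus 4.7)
The plan is to re-run the same integration-by-parts argument used in the proof of Lemma~\ref{primlemma1}, but for exactly $k=\ell+1$ iterations, and to observe that the coefficient in front of the remainder integral vanishes at the last step precisely because $\nu=2(\alpha+\ell)+1$. The integration-by-parts identity
$$
\int_M^N t^\nu j_{\alpha+i-1}(rt)\,dt=\int_M^N t^{\nu-2(\alpha+i-1)-1}\cdot t^{2(\alpha+i-1)+1}j_{\alpha+i-1}(rt)\,dt,
$$
combined with \eqref{dj}, is valid for \emph{any} real $\nu$ (the hypothesis on $\nu$ in Lemma~\ref{primlemma1} is used only to assert that the resulting constants are nonzero), so we can perform the iteration regardless.

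First I would iterate the same recursion exactly as in Lemma~\ref{primlemma1} to obtain, after $\ell+1$ applications of integration by parts,
$$
\int_M^N t^\nu j_\alpha(rt)\,dt=\sum_{i=1}^{\ell+1}C_{i,\nu,\alpha}\bigl(N^{\nu+1}j_{\alpha+i}(rN)-M^{\nu+1}j_{\alpha+i}(rM)\bigr)+C'_{\ell+1,\nu,\alpha}\int_M^N t^\nu j_{\alpha+\ell+1}(rt)\,dt,
$$
where the constants $C_{i,\nu,\alpha}$, $C'_{k,\nu,\alpha}$ are produced by exactly the same recursion as in the proof of Lemma~\ref{primlemma1}, namely
$$
C_{k+1,\nu,\alpha}=\frac{C'_{k,\nu,\alpha}}{2(\alpha+k)+2},\qquad C'_{k+1,\nu,\alpha}=-C'_{k,\nu,\alpha}\,\frac{\nu-2(\alpha+k)-1}{2(\alpha+k)+2}.
$$

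The key observation is then immediate: substituting $\nu=2(\alpha+\ell)+1$, the numerator in the recursion for $C'_{k+1,\nu,\alpha}$ at $k=\ell$ is $\nu-2(\alpha+\ell)-1=0$, hence $C'_{\ell+1,\nu,\alpha}=0$ and the remainder integral above disappears. At the intermediate steps $k=1,\dots,\ell$, however, the factor $\nu-2(\alpha+k-1)-1=2(\ell-k+1)$ is strictly positive, so the constants $C'_{k,\nu,\alpha}$ remain nonzero for $k\le\ell$, which in turn forces $C_{\ell+1,\nu,\alpha}=C'_{\ell,\nu,\alpha}/(2(\alpha+\ell)+2)\neq 0$ and matches the constants produced in Lemma~\ref{primlemma1}.

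I do not anticipate a substantial obstacle here; the only point that needs care is making precise the claim that the constants coincide with those of Lemma~\ref{primlemma1}. Since both are defined by the same explicit recursion originating from \eqref{dj}, it suffices to note that the first $\ell$ steps of the iteration are identical in both settings, and only the $(\ell+1)$-th step differs in that its remainder coefficient happens to be zero. No further estimates or analytic input are required beyond the derivative identity \eqref{dj} already used to prove Lemma~\ref{primlemma1}.
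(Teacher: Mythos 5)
Your proposal is correct and is essentially the paper's argument: both rest on the same integration-by-parts recursion from Lemma~\ref{primlemma1} together with the observation that at the $(\ell+1)$-th step the factor $\nu-2(\alpha+\ell)-1$ vanishes. The only cosmetic difference is that the paper stops after $k=\ell$ applications and evaluates the last remainder integral exactly via \eqref{dj} (it is an exact derivative), whereas you perform one more formal step and note $C'_{\ell+1,\nu,\alpha}=0$; these are the same computation.
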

	\begin{proof}
	If $\ell=0$, the result immediately follows from \eqref{dj}. If $\ell>0$, we can apply Lemma~\ref{primlemma1} with $\nu' = 2(\alpha+\ell-1)+1$ in place of $\nu$, and then by \eqref{dj},
	\begin{align*}
	C'_{\ell,\nu,\alpha}\int_M^N t^{2(\alpha+\ell)+1}j_{\alpha+\ell}(rt)\, dt  =  C_{\ell+1,\nu,\alpha}\big( N^{\nu+1}j_{\alpha+\ell+1}(rN)  -M^{\nu+1} j_{\alpha+\ell+1}(rM)\big),
	\end{align*}
	where $C_{\ell+1,\nu,\alpha} = \frac{C'_{\ell,\nu,\alpha}}{2(\alpha+\ell)+1}$.
	\end{proof}
	\begin{remark}\label{rem=0}
	We can allow $M=0$ in Lemmas~\ref{primlemma1}~and~\ref{primlemma2}  whenever $\nu> -1$.
	\end{remark}
	\begin{lemma}\label{primlemma3}
		Let $\alpha\geq -1/2$, $r>0$ and $0<M<N$. For any  $\nu\in \R$ and any $k\geq 1$ such that $ \nu \neq \alpha+k-1/2$, we have
		\begin{equation}\label{primitiveestimate}
		\bigg| \int_M^N t^\nu j_\alpha(rt)\, dt\bigg|\lesssim \frac{1}{r^{\alpha+1/2}} \sum_{i=1}^k \frac{1}{r^i}\big(N^{\nu-i-\alpha+1/2}+M^{\nu-i-\alpha+1/2}\big).
		\end{equation}
	\end{lemma}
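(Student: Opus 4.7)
The plan is to combine the decompositions given by Lemmas~\ref{primlemma1} and \ref{primlemma2} with the pointwise Bessel bound \eqref{est}, and estimate each resulting piece individually.

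First, I would split into cases according to whether $\nu = 2(\alpha+\ell)+1$ for some $\ell \in \{0,1,\ldots,k-1\}$. In the generic case, where no such equality holds, Lemma~\ref{primlemma1} writes $\int_M^N t^\nu j_\alpha(rt)\,dt$ as $k$ boundary terms of the form $N^{\nu+1}j_{\alpha+i}(rN) - M^{\nu+1}j_{\alpha+i}(rM)$ for $i=1,\ldots,k$, plus a residual integral $C'_{k,\nu,\alpha}\int_M^N t^\nu j_{\alpha+k}(rt)\,dt$. In the exceptional case, Lemma~\ref{primlemma2} terminates the expansion at index $\ell+1 \leq k$ with no residual integral; since the right-hand side of \eqref{primitiveestimate} is a sum of nonnegative terms, having fewer summands only strengthens the inequality, so it is enough to handle the generic case.

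Next, each boundary term is controlled by invoking \eqref{est} in the (weakened) form $|j_{\alpha+i}(z)| \lesssim z^{-\alpha-i-1/2}$, which holds uniformly in $z>0$ since $\min\{1,z^{-\alpha-i-1/2}\}\le z^{-\alpha-i-1/2}$. This yields
$$|N^{\nu+1} j_{\alpha+i}(rN)| \lesssim r^{-\alpha-i-1/2}\, N^{\nu-\alpha-i+1/2},$$
and the same bound for the $M$-term. Factoring out $r^{-\alpha-1/2}$ from $r^{-\alpha-i-1/2}$ reproduces exactly the $i$th summand on the right-hand side of \eqref{primitiveestimate}.

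Finally, for the residual integral in the generic case, the same Bessel bound gives
$$\bigg|\int_M^N t^\nu j_{\alpha+k}(rt)\,dt\bigg| \lesssim r^{-\alpha-k-1/2}\int_M^N t^{\nu-\alpha-k-1/2}\,dt.$$
Here the hypothesis $\nu \neq \alpha + k - 1/2$ is precisely what ensures that the exponent $\nu - \alpha - k + 1/2$ arising after integration is non-zero, so the integral evaluates to $(\nu-\alpha-k+1/2)^{-1}\bigl(N^{\nu-\alpha-k+1/2} - M^{\nu-\alpha-k+1/2}\bigr)$ and is absorbed into the $i=k$ summand. I do not expect any substantive obstacle; the only delicate point is this bookkeeping around the forbidden exponent, without which the integration would instead produce a logarithmic term incompatible with the stated bound.
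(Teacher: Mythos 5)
Your proposal is correct and follows essentially the same route as the paper: decompose via Lemmas~\ref{primlemma1} and \ref{primlemma2}, bound the boundary terms with $|j_{\alpha+i}(z)|\lesssim z^{-\alpha-i-1/2}$ from \eqref{est}, and use the non-resonance condition $\nu\neq\alpha+k-1/2$ to integrate the residual term without producing a logarithm. No gaps to report.
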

	\begin{proof}
		If $\nu$ is as in Lemma~\ref{primlemma2}, the estimate follows by just applying \eqref{est}. On the contrary, if $\nu$ is as in Lemma~\ref{primlemma1}, we estimate the sum of \eqref{prim} in a similar way, whilst since $\nu -\alpha-k-1/2 \neq -1$,
		\begin{align*}
		\bigg|\int_M^N t^\nu j_{\alpha+k}(rt)\, dt \bigg|&  \lesssim \frac{1}{r^{\alpha+k+1/2}}\int_M^N t^{\nu-\alpha-k-1/2}\, dt\\
		&\lesssim \frac{1}{r^{k+\alpha+1/2}}\big(N^{\nu-k-\alpha+1/2}+M^{\nu-k- \alpha+1/2}\big),
		\end{align*}
		which coincides precisely with the $k$-th term of the sum in \eqref{primitiveestimate}.
	\end{proof}
	Since the Bessel function $j_\alpha(z)$ is continuous, if we denote by $g^\nu_{\alpha,r}(t)$ the primitive function of $t^\nu j_\alpha(rt)$, we have, in virtue of the fundamental theorem of calculus,
		\begin{equation}\label{primformula}
		g^\nu_{\alpha,r}(t) = \begin{dcases}	 \int_0^t s^\nu j_\alpha(rs)\, ds, & \text{if } \nu \geq\alpha+1/2 \text{ and }\alpha>-1/2,\text{ or if }\nu>\alpha+1/2,\\ 	
	-\int_t^\infty s^\nu j_\alpha(rs)\, ds, &\text{if }\nu < \alpha+1/2, \\
		\dfrac{\sin rt}{r}, & \text{if }\nu=0 \text{ and }\alpha=-1/2.
		\end{dcases}
		\end{equation}
		\begin{remark}
			Note that 
		$$
		\int_0^x t^\nu j_\alpha(rt)\, dt = \frac{x^{\nu+1}}{\nu+1} {}_1F_2\bigg( \frac{1}{2}(\nu+1) ; \frac{1}{2}(\nu+3), \alpha+1 ; -\frac{(rx)^2}{4}\bigg), \quad \nu>-1,
		$$
		where  ${}_pF_q$ denotes the generalized hypergeometric function (see \cite[Ch. 6]{luke}).
	\end{remark}
	
	We are now in a position to obtain the upper bound of  \eqref{primformula}.
	
	\begin{lemma}\label{mainlemma}
		The estimate
				\begin{equation}
				\label{Primest}
				|g^\nu_{\alpha,r}(t)| \lesssim \frac{t^{\nu-\alpha-1/2}}{r^{\alpha+3/2}}, \quad \alpha\geq -1/2, \quad \nu\in \R,
				\end{equation}
holds.
	\end{lemma}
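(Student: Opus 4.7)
The plan is to handle the three branches of the piecewise definition \eqref{primformula} of $g^\nu_{\alpha,r}$ separately, using Lemmas~\ref{primlemma1}--\ref{primlemma3} to re-express the primitive in terms of integrals involving the higher-order functions $j_{\alpha+k}$, whose stronger decay makes the target comparison essentially routine. The degenerate entry $\nu=0$, $\alpha=-1/2$ is immediate since $|\sin(rt)/r|\leq 1/r=t^{\nu-\alpha-1/2}/r^{\alpha+3/2}$.

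In the regime $\nu<\alpha+1/2$ I would write $g^\nu_{\alpha,r}(t)=-\int_t^\infty s^\nu j_\alpha(rs)\,ds$ and apply Lemma~\ref{primlemma1} with $M=t$, $N\to\infty$, $k=1$. The chain $\nu<\alpha+1/2\leq 2\alpha+1$ (strict unless we are in the degenerate case above) rules out the excluded value $\nu=2\alpha+1$, and the condition $\nu+1<\alpha+3/2$ forces the boundary term at infinity to vanish. The identity
$$
g^\nu_{\alpha,r}(t)=C\,t^{\nu+1}j_{\alpha+1}(rt)-C'\int_t^\infty s^\nu j_{\alpha+1}(rs)\,ds
$$
is then estimated termwise: the boundary term is bounded via $|j_{\alpha+1}(rt)|\lesssim\min\{1,(rt)^{-\alpha-3/2}\}$, which delivers the target immediately when $rt\geq 1$ and uses $(rt)^{\alpha+3/2}\leq 1$ to absorb the gap otherwise. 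The remainder integral is now absolutely convergent; splitting it at $s=1/r$ when $t<1/r$ and using both regimes of \eqref{est} produces the same bound by routine calculation.

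In the regime $\nu\geq\alpha+1/2$, $g^\nu_{\alpha,r}(t)=\int_0^t s^\nu j_\alpha(rs)\,ds$. For $rt\leq 1$ the trivial bound $|j_\alpha|\leq 1$ gives $|g^\nu_{\alpha,r}(t)|\leq t^{\nu+1}/(\nu+1)$, and $(rt)^{\alpha+3/2}\leq 1$ converts this to the target. For $rt>1$ I would split $\int_0^t=\int_0^{1/r}+\int_{1/r}^t$: the first piece is $\leq(1/r)^{\nu+1}/(\nu+1)$ and reduces to the target through $(rt)^{\nu-\alpha-1/2}\geq 1$, while the second is treated by Lemma~\ref{primlemma3} with $M=1/r$ and $N=t$. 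Taking $k=1$ when $\nu\neq\alpha+1/2$, the $i=1$ summand already has exactly the correct size $t^{\nu-\alpha-1/2}/r^{\alpha+3/2}$. The main obstacle is the borderline $\nu=\alpha+1/2$ (necessarily with $\alpha>-1/2$), where $k=1$ is forbidden by the arithmetic hypothesis of Lemma~\ref{primlemma3}; there I would use $k=2$ and verify that the $i=2$ boundary contribution $M^{-1}=r$ at $M=1/r$ combines with the factor $r^{-\alpha-5/2}$ to yield $1/r^{\alpha+3/2}$, matching the target $t^{\nu-\alpha-1/2}/r^{\alpha+3/2}=1/r^{\alpha+3/2}$.
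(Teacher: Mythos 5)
Your proposal is correct and follows essentially the same route as the paper: both rest on the reduction to higher-order Bessel functions via Lemmas~\ref{primlemma1}--\ref{primlemma3} combined with \eqref{est}, with the trivial bound for $rt\leq 1$ and a second-order ($k=2$) application to handle the borderline case $\nu=\alpha+1/2$, $\alpha>-1/2$. The only differences are cosmetic (where you split at $s=1/r$ and whether you invoke Lemma~\ref{primlemma3} directly or redo its estimate termwise), so no further changes are needed.
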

	\begin{proof}
		We distinguish two cases: $\nu\neq \alpha+1/2$, or $\nu=\alpha+1/2$. In the first case, estimate \eqref{Primest} follows readily applying Lemma~\ref{primlemma3} with $k=1$ and letting $M\to 0$ or $N \to \infty$ if $\nu>\alpha+1/2$ or $\nu< \alpha+1/2$, respectively. 
		
		If $\nu=\alpha+1/2$, and $\alpha=-1/2$, \eqref{Primest} follows immediately from \eqref{primformula}. For $\alpha>-1/2$, we can apply Lemma~\ref{primlemma1} with $k=2$ (see also Remark~\ref{rem=0}) to obtain
		
				\begin{align*}
				|g^{\alpha+1/2}_{\alpha,r}(t)|& =\bigg| C_1t^{\alpha+3/2}j_{\alpha+1}(rt) + C_2 t^{\alpha+3/2}j_{\alpha+2}(rt) +C_3 \int_0^t s^{\alpha+1/2} j_{\alpha+2}(rs)\, ds\bigg| \\
				&\lesssim \frac{1}{r^{\alpha+3/2}}+\frac{1}{tr^{\alpha+5/2}} + \int_0^t s^{\alpha+1/2}|j_{\alpha+2}(rs)|\, ds
				\end{align*}
		It follows from \eqref{est} that
		\begin{align*}
		\int_0^t s^{\alpha+1/2}|j_{\alpha+2}(rs)|\, ds \leq \int_{0}^{1/r} s^{\alpha+1/2}\, ds + \frac{1}{r^{\alpha+5/2}} \int_{1/r}^\infty s^{-2}\, ds \lesssim \frac{1}{r^{\alpha+3/2}}.
		\end{align*}
		Collecting the estimates above, we deduce
		\begin{equation*}
		\label{specialest}
		|g^{\alpha+1/2}_{\alpha,r}(t)|\lesssim \frac{1}{r^{\alpha+3/2}}+\frac{1}{tr^{\alpha+5/2}}.
		\end{equation*}
		In particular, it follows from the latter estimate that \eqref{Primest} holds whenever $t\geq 1/r$. 
		
		Finally, if $t<1/r$, using \eqref{primformula} together with \eqref{smallz} we obtain
		$$
		|g^\nu_{\alpha,r}(t)|  \asymp \int_0^t s^{\alpha+1/2}\, ds \asymp t^{\alpha+3/2}<\frac{1}{r^{\alpha+3/2}},
 		$$
 		and the proof is complete.
	\end{proof}
	
	
	\subsection{General Monotonicity} \label{subsecgm}
	
	It is often useful to consider quantitative characteristics of functions that are locally of bounded variation, such as the so-called \emph{general monotonicity} (cf. \cite{GLT}, \cite{LTnach}, \cite{LTparis}, \cite{LTtrends} and \cite{TikGM}). 
	
	\begin{definition}
		Let $\beta:\R_+\to \R_+$. We say that a function $f$ is $\beta$-\emph{general monotone}, written $f\in GM(\beta)$, if there exists $C>0$ such that for every $x>0$,
		$$
		\int_x^{2x}|df(t)|\leq C\beta(x).
		$$
	\end{definition}
	In many cases important $GM(\beta)$ classes are those where $\beta$ depends on the function $f$ itself, rather than on its variation.	We restrict ourselves to the concrete choice of $\beta$ introduced in \cite{LTparis}.
	\begin{definition}\label{defgm}
		We say that $f$ is a $GM$ function, written $f\in GM$, if there exist $C,\lambda>1$ such that for every $x>0$,
		$$
		\int_x^{2x} |df(t)| \leq \frac{C}{x}\int_{x/\lambda}^{\lambda x}|f(t)|\, dt.
		$$
	\end{definition}
	Note that any monotone function is also a $GM$ function.
	
	We could consider more general $GM(\beta)$ classes, such as the one defined in \cite{DLT}, where
	\begin{equation*}\label{gmb}
	\beta(x)=\beta_0(x)=\frac{1}{x} \sup_{s\geq x/\lambda} \int_s^{2s} |f(t)|\, dt.
	\end{equation*}
	It is known that $GM\subsetneq GM(\beta_0)$. However, the latter class is too wide, and may even give no useful information about the variation of $f$, if $\int_x^{2x}|f(t)|\, dt$ is not bounded at infinity. 
	
	The following holds for any $GM(\beta)$ function (see \cite[Lemma~5.2]{LTnach}): 
	\begin{lemma}\label{gmlemma}
		If $f\in GM(\beta)$ and $x>0$, then
		$$
		|f(t)| \leq C\beta(x) + \int_t^{2t} \frac{|f(s)|}{s}\, ds \quad \text{for any }t\in[x,2x].
		$$
	\end{lemma}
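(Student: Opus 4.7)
The plan is to derive the pointwise bound on $|f(t)|$ from a variation estimate by averaging against the logarithmic measure $ds/s$, which is exactly what produces the $\int_t^{2t}|f(s)|/s\,ds$ term on the right. The only nontrivial ingredient is the defining inequality of the $GM(\beta)$ class, applied at the scale $t$ rather than $x$.

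First, I would fix $t\in[x,2x]$ and, for any $s\in[t,2t]$, use the identity $f(t)=f(s)-\int_t^s df(u)$ together with the crude bound $\bigl|\int_t^s df(u)\bigr|\leq\int_t^{2t}|df(u)|$ to obtain
\begin{equation*}
|f(t)|\leq |f(s)|+\int_t^{2t}|df(u)|.
\end{equation*}
Next, divide by $s$ and integrate over $s\in[t,2t]$; since $\int_t^{2t}ds/s=\log 2$, this becomes
\begin{equation*}
(\log 2)\,|f(t)|\leq \int_t^{2t}\frac{|f(s)|}{s}\,ds+(\log 2)\int_t^{2t}|df(u)|.
\end{equation*}
The $GM(\beta)$ property, applied with $t$ in place of $x$, gives $\int_t^{2t}|df(u)|\leq C\beta(t)$, and dividing the displayed inequality by $\log 2$ yields
\begin{equation*}
|f(t)|\leq C\beta(t)+\frac{1}{\log 2}\int_t^{2t}\frac{|f(s)|}{s}\,ds,
\end{equation*}
with the numerical prefactor $1/\log 2$ absorbed into the overall implicit constant.

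The only remaining point is to replace $\beta(t)$ by $\beta(x)$: since $t\in[x,2x]$, in the $GM(\beta)$ classes under consideration (trivially for $\beta$ associated with Definition~\ref{defgm}, and for the $\beta_0$ recalled above) the function $\beta$ is comparable on dyadic blocks, so $\beta(t)\asymp \beta(x)$ and the statement follows. The heart of the proof is the one-line averaging computation above; I expect the only mildly delicate point to be this cosmetic identification of $\beta(t)$ with $\beta(x)$ and the absorption of the $1/\log 2$ factor into the unspecified constant $C$.
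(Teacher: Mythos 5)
Your averaging computation is the standard argument (the paper itself does not reprove this lemma; it quotes it from \cite[Lemma~5.2]{LTnach}), and it correctly yields
\[
|f(t)|\;\le\; C\beta(t)+\frac{1}{\log 2}\int_t^{2t}\frac{|f(s)|}{s}\,ds .
\]
The gap is the final step, where you pass from $\beta(t)$ to $\beta(x)$; this is not cosmetic. The lemma is stated for an arbitrary $\beta\colon\R_+\to\R_+$, and for such $\beta$ there is no dyadic comparability to invoke: the $GM(\beta)$ condition alone does not give the inequality with $\beta(x)$ uniformly in $t\in[x,2x]$. For instance, let $f\equiv 1$ on $[0,2x+\varepsilon]$, $f\equiv 0$ afterwards, and $\beta(y):=\int_y^{2y}|df|$, so that $f\in GM(\beta)$ with $C=1$ and $\beta(x)=0$; for $t=2x-\varepsilon$ one has $|f(t)|=1$ while $\int_t^{2t}|f(s)|s^{-1}\,ds\asymp \varepsilon/x$. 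What the replacement of $\beta(t)$ by $\beta(x)$ really needs is an extra property of $\beta$, e.g.\ $\beta(2x)\lesssim\beta(x)$, since for $t\in[x,2x]$ one has $\int_t^{2t}|df|\le\int_x^{2x}|df|+\int_{2x}^{4x}|df|\le C\big(\beta(x)+\beta(2x)\big)$. This property does hold for $\beta_0$ (indeed $\beta_0(t)\le\beta_0(x)$ for $t\ge x$), but for the $\beta$ of Definition~\ref{defgm} it is not ``trivial'': one only gets $\frac1t\int_{t/\lambda}^{\lambda t}|f|\le\frac{2}{x}\int_{x/(2\lambda)}^{2\lambda x}|f|$, i.e.\ the comparison forces an enlargement of $\lambda$ (harmless, because $GM$ with parameter $\lambda$ implies $GM$ with parameter $2\lambda$, but it has to be said). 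The two-sided equivalence $\beta(t)\asymp\beta(x)$ you assert is neither true in general nor needed.

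Two smaller remarks. First, everything the paper actually extracts from this lemma, namely \eqref{gmest}, is the case $t=x$, for which your $\beta(t)$-version already suffices; so the cleanest repair is either to prove and use the lemma in that form, or to state explicitly the almost-decreasing hypothesis $\beta(2x)\lesssim\beta(x)$ under which your last step is legitimate. Second, the factor $1/\log 2$ in front of the integral cannot literally be ``absorbed into $C$'', since in the statement the constant multiplies only $\beta(x)$; this is immaterial for the applications, but as written your inequality is slightly weaker than the one claimed.
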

	It follows from Lemma~\ref{gmlemma} that if $f\in GM$ and $\lambda\geq 2$ (which can be assumed without loss of generality), one has
	\begin{equation}\label{gmest}
	|f(x)| \leq C \int_{x/\lambda}^{\lambda x} \frac{|f(t)|}{t}\, dt \asymp \frac{1}{x}\int_{x/\lambda}^{\lambda x} |f(t)|\, dt, \quad x>0.
	\end{equation}
	
	Note that the following estimate holds for all $f\in GM$:	
	\begin{align}
	\int_1^\infty t^{\nu-\alpha-1/2}\,|df(t)|&\lesssim \int_{1/2}^{\infty} \frac{1}{t} \int_{t}^{2t}s^{\nu-\alpha-1/2}\, |df(s)|\, dt \asymp \int_{1/2}^{\infty} t^{\nu-\alpha-3/2}\int_{t}^{2t} |df(s)| \, dt \nonumber \\ 
	&\lesssim \int_{1/2}^{\infty} t^{\nu-\alpha-5/2} \int_{t/\lambda}^{\lambda t}|f(s)|\, ds\, dt \lesssim \int_{1/(2\lambda)}^\infty  s^{\nu-\alpha-3/2}|f(s)|\, ds. \label{gmvariation}
	\end{align}
	We can apply the latter inequality in order to replace the hypotheses on the variation of $f$ by integrability conditions (for instance, compare Proposition~\ref{pointwise} with Corollary~\ref{corolpoint} below).

	\section{Pointwise and uniform convergence of $\mathcal{L}^{\alpha}_{\nu,\mu}f$: first approach}\label{sectionpointwise}
	In this section we are interested in finding sufficient conditions on $f$ that guarantee the pointwise convergence of \eqref{generaloperator}. We will see that these sufficient conditions also imply the uniform convergence of \eqref{generaloperator} on certain subintervals of $\R_+$. If we assume that $t^{\nu}f(t)\in L^1(0,1)$, the convergence of $\mathcal{L}_{\nu,\mu}^\alpha f$ at $r_0\in \R_+$ means that
	$$
	\lim_{M\to \infty} \bigg| r_0^{\mu+\nu} \int_0^M t^\nu f(t)j_\alpha(r_0t)\, dt \bigg|<\infty.
	$$
	In contrast with the  criteria for uniform convergence (see Theorems~\ref{thm1}~and~\ref{sinethm}), we do not impose restrictions on the parameters for now. The criterion for convergence at the origin is rather simple:
	\begin{enumerate}[label=(\roman{*})]
		\item If $\mu+\nu<0$, then $\mathcal{L}^\alpha_{\nu,\mu}f(0)$ is not defined.
		\item If $\mu+\nu=0$, the convergence of $\mathcal{L}^\alpha_{\nu,\mu}f(0)$ is equivalent to $\big| \int_0^\infty t^{\nu} f(t)\, dt\big|<\infty$.
		\item If $\mu+\nu>0$, then $\mathcal{L}^\alpha_{\nu,\mu}f(0)=0$.
	\end{enumerate}

Now we study the pointwise convergence of $\mathcal{L}_{\nu,\mu}^\alpha f(r)$ for $r>0$. When possible, we also give sufficient conditions for the uniform convergence on subintervals of $\R_+$. The statements in this section can be subdivided into two categories, depending on their hypotheses. First, we have those relying on integrability of $f$, and secondly, those involving conditions on the variation of $f$.

\subsection{Integrability conditions}
We begin with the statements involving integrability conditions of $f$. \begin{proposition}\label{unifrough}
	Let $f$ be such that $t^\nu f(t)\in L^1(0,1)$ and $t^{\nu-\alpha-1/2}f(t)\in L^1(1,\infty)$. Then $\mathcal{L}^\alpha_{\nu,\mu} f$ converges for $r>0$. Moreover,
	\begin{enumerate}
		\item If $\mu+\nu-\alpha-1/2<0$, then $\mathcal{L}^\alpha_{\nu,\mu} f$ converges uniformly on any interval $[\varepsilon,\infty)$ with $\varepsilon >0$.
		\item If $\mu+\nu-\alpha-1/2>0$, then $\mathcal{L}^\alpha_{\nu,\mu} f$ converges uniformly on any interval $[0,\varepsilon]$ with $\varepsilon >0$.
		\item If $\mu+\nu-\alpha-1/2=0$, then $\mathcal{L}^\alpha_{\nu,\mu} f$ converges uniformly on $\R_+$.
	\end{enumerate}
\end{proposition}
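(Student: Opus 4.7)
The whole argument reduces to controlling the Cauchy remainder
$$I_{M,N}(r) := r^\mu \int_M^N (rt)^\nu f(t)\, j_\alpha(rt)\, dt$$
as $M\to\infty$, uniformly in $r$ on the relevant set. The only analytic ingredient is the pointwise bound $|j_\alpha(z)|\lesssim \min\{1,z^{-\alpha-1/2}\}$ from \eqref{est}; everything else is a careful splitting of the integral.

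For fixed $r>0$ I would split $[M,N]$ at the crossover point $t=1/r$ (discarding the piece that is empty when $1/r\notin[M,N]$). On $[1/r,N]$ the bound $|j_\alpha(rt)|\lesssim (rt)^{-\alpha-1/2}$ gives immediately
$$r^\mu\int_{1/r}^{N}(rt)^\nu |f(t)|\,|j_\alpha(rt)|\,dt \lesssim r^{\mu+\nu-\alpha-1/2}\int_{1/r}^{N} t^{\nu-\alpha-1/2}|f(t)|\,dt.$$
On $[M,1/r]$ I use $|j_\alpha(rt)|\lesssim 1$; the key bookkeeping trick is that for $t\leq 1/r$ and $\alpha+1/2\geq 0$ one has $t^{\alpha+1/2}\leq r^{-\alpha-1/2}$, hence $t^\nu = t^{\alpha+1/2}\cdot t^{\nu-\alpha-1/2}\leq r^{-\alpha-1/2}t^{\nu-\alpha-1/2}$, which produces the \emph{same} power of $r$ as above. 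Combining,
$$|I_{M,N}(r)|\lesssim r^{\mu+\nu-\alpha-1/2}\int_M^\infty t^{\nu-\alpha-1/2}|f(t)|\,dt.$$
By the hypothesis $t^{\nu-\alpha-1/2}f(t)\in L^1(1,\infty)$, the tail integral tends to $0$ as $M\to\infty$.

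It then suffices to check, in each case, that $r^{\mu+\nu-\alpha-1/2}$ is bounded uniformly in $r$ on the prescribed set: a negative exponent is bounded from above on $[\varepsilon,\infty)$ (case 1), a positive exponent is bounded on $[0,\varepsilon]$ (case 2, with the value $r=0$ covered trivially since the whole integrand vanishes there once $\mu+\nu>\alpha+1/2\geq 0$), and a zero exponent yields uniformity on all of $\R_+$ (case 3). For pointwise convergence at any fixed $r>0$, the finiteness of $\int_0^M t^\nu f(t)\, j_\alpha(rt)\, dt$ is handled by splitting at $t=1$ and using $t^\nu f(t)\in L^1(0,1)$ together with boundedness of $t^\nu |f(t)|$ against $t^{\nu-\alpha-1/2}|f(t)|$ on the compact set $[1,M]$. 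There is no real obstacle beyond this bookkeeping; the only temptation to avoid is to estimate the two pieces of the integral with different powers of $r$, which fragments the criterion into a less clean form. Unifying the exponents via the inequality $t^{\alpha+1/2}\leq r^{-\alpha-1/2}$ on $[M,1/r]$ is what makes the three cases fall out of a single estimate.
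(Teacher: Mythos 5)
Your proof is correct and takes essentially the same route as the paper: both reduce the Cauchy remainder via \eqref{est} to the bound $r^{\mu+\nu-\alpha-1/2}\int_M^\infty t^{\nu-\alpha-1/2}|f(t)|\,dt$ and then simply control the power of $r$ on $[\varepsilon,\infty)$, $[0,\varepsilon]$, or $\R_+$ according to the sign of $\mu+\nu-\alpha-1/2$. Your splitting at $t=1/r$ is harmless but unnecessary, since for $\alpha\geq -1/2$ one has $|j_\alpha(z)|\lesssim z^{-\alpha-1/2}$ for \emph{all} $z>0$, which is what the paper applies directly.
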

\begin{proof}
	It is clear that the pointwise convergence of $\mathcal{L}^\alpha_{\nu,\mu}$ at $r>0$ is equivalent to $$\int_M^\infty t^\nu f(t)j_\alpha(rt)\, dt =o(1)  \quad \text{as }M\to \infty,$$
	which holds by simply applying \eqref{est} and the fact that $t^{\nu-\alpha-1/2}\in L^1(1,\infty)$. 
	
	Let us now prove the statement concerning uniform convergence. For each of the three cases, since $t^{\nu-\alpha-1/2}f(t)\in L^1(1,\infty)$, it follows from \eqref{est} that  
	$$
	r^{\mu+\nu}\int_M^\infty t^\nu f(t) j_\alpha(rt)\, dt \leq \varepsilon^{\mu+\nu-\alpha-1/2} \int_M^\infty t^{\nu-\alpha-1/2} |f(t)|\, dt = o(1) \quad \text{as }M\to \infty,
	$$
	 that is, the Cauchy remainder vanishes uniformly in $r$ as $M\to \infty$ (in each corresponding interval).
\end{proof}
Proposition~\ref{unifrough} allows us to derive sufficient conditions for the uniform convergence of $\mathcal{L}^{\alpha}_{\mu,\nu}f$ on $\R_+$ whenever $0\leq \mu+\nu\leq \alpha+1/2$.
\begin{corollary}\label{PROPuclacunary}
	Let $0\leq \mu+\nu\leq \alpha+1/2$. If $t^\nu f(t)\in L^1(\R_+)$, then $\mathcal{L}^\alpha_{\nu,\mu} f$ converges uniformly.
\end{corollary}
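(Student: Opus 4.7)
The plan is to reduce everything to Proposition~\ref{unifrough} by noting that the hypothesis $t^\nu f(t)\in L^1(\R_+)$ automatically gives both integrability conditions in that proposition. Indeed, since $\alpha\geq -1/2$, one has $t^{-\alpha-1/2}\leq 1$ for $t\geq 1$, so
$$
\int_1^\infty t^{\nu-\alpha-1/2}|f(t)|\,dt \leq \int_1^\infty t^\nu|f(t)|\,dt<\infty,
$$
and trivially $t^\nu f(t)\in L^1(0,1)$. Hence Proposition~\ref{unifrough} applies.

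If $\mu+\nu=\alpha+1/2$, then item~(3) of Proposition~\ref{unifrough} immediately delivers uniform convergence on $\R_+$. Otherwise $\mu+\nu<\alpha+1/2$, and item~(1) yields uniform convergence on $[1,\infty)$. It remains to handle $r\in[0,1]$, which is the only place where the integrability hypothesis (rather than just $t^{\nu-\alpha-1/2}f\in L^1(1,\infty)$) is really used.

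For $r\in[0,1]$, I would bound the Cauchy remainder directly by the trivial estimate $|j_\alpha(rt)|\leq 1$:
$$
\bigg|r^{\mu+\nu}\int_M^N t^\nu f(t)j_\alpha(rt)\,dt\bigg|\leq r^{\mu+\nu}\int_M^N t^\nu|f(t)|\,dt\leq \int_M^N t^\nu|f(t)|\,dt,
$$
where in the last step the sign condition $\mu+\nu\geq 0$ together with $r\leq 1$ is used to kill the prefactor. The right-hand side vanishes as $M\to\infty$ independently of $r$, because $t^\nu f(t)\in L^1(\R_+)$. Combining with the $[1,\infty)$ case gives the uniform convergence on all of $\R_+$.

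There is no real obstacle here; the only point to watch is that the hypothesis $0\leq \mu+\nu$ is needed precisely to obtain a uniform bound on $r^{\mu+\nu}$ for small $r$, while $\mu+\nu\leq \alpha+1/2$ is what makes Proposition~\ref{unifrough} applicable on $[1,\infty)$ without extra work. Together these two sign constraints partition $\R_+$ cleanly into the two regimes handled above.
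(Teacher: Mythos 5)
Your proposal is correct and follows essentially the same route as the paper: both reduce to Proposition~\ref{unifrough} (item (3) when $\mu+\nu=\alpha+1/2$, item (1) when $\mu+\nu<\alpha+1/2$), using $\alpha\geq -1/2$ to pass from $t^\nu f\in L^1(\R_+)$ to $t^{\nu-\alpha-1/2}f\in L^1(1,\infty)$, and both handle the small-$r$ regime by bounding the Cauchy remainder with $|j_\alpha(rt)|\leq 1$ and the sign condition $\mu+\nu\geq 0$ to control the prefactor $r^{\mu+\nu}$.
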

\begin{proof}
	First, if $0\leq\mu+\nu<\alpha+1/2$, note that since $\alpha\geq -1/2$, $t^\nu f(t)\in L^1(\R_+)$ implies $t^{\nu-\alpha-1/2}f(t)\in L^1(1,\infty)$, so we can apply Proposition~\ref{unifrough} to deduce that $\mathcal{L}^\alpha_{\nu,\mu}f$ converges uniformly on any interval $[\varepsilon,\infty)$ with $\varepsilon>0$, whilst the uniform convergence on the interval $[0,\varepsilon]$ follows from
	$$
	r^{\mu+\nu}\int_M^\infty t^\nu f(t)j_{\alpha}(rt)\, dt\leq \varepsilon^{\mu+\nu} \int_M^\infty t^{\nu}|f(t)|\, dt \to 0 \qquad \text{as }M\to \infty.
	$$
	Secondly, if $\mu+\nu=\alpha+1/2$, then $t^{\nu-\alpha-1/2}f(t)=t^{-\mu}f(t)$, and therefore $t^{\nu}f(t)\in L^1(\R_+)$ implies $t^{-\mu}f(t)\in L^1(1,\infty)$ (since $\nu\geq-\mu$ for every $\alpha\geq -1/2$), and the result follows by Proposition~\ref{unifrough}.
\end{proof}

\subsection{Variational conditions}
The statements of this subsection involve conditions on the variation of $f$. In the case of $GM$ functions, these follow from integrability conditions of $f$ (cf. \eqref{gmest}), allowing us to rewrite certain statements. When possible, we also give sufficient conditions for the uniform convergence of $\mathcal{L}^\alpha_{\nu,\mu}f$ on $\R_+$ that follow after combining the results on the present subsection with those of the previous one.
	\begin{proposition}\label{pointwise}
	Let $f$ be such that $t^\nu f(t)\in L^1(0,1)$ and
	\begin{equation}\label{pointwisecond}
	\int_1^\infty t^{\nu-\alpha-1/2}|df(t)|<\infty \quad \text{and}\quad M^{\nu-\alpha-1/2}|f(M)| \to 0 \quad\text{as }M\to \infty ,
	\end{equation}
	then $\mathcal{L}^\alpha_{\nu,\mu}f(r)$ converges for $r>0$. Moreover, for any $\varepsilon>0$,
	\begin{enumerate}
		\item if $\mu+\nu-\alpha-3/2> 0$, the convergence is uniform on any interval $[0,\varepsilon]$;
		\item if $\mu+\nu-\alpha-3/2< 0$, the convergence is uniform on any interval $[\varepsilon,\infty)$;
		\item if $\mu+\nu-\alpha-3/2=0$, the convergence is uniform on $\R_+$.
	\end{enumerate}
	\end{proposition}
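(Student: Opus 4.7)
The plan is to reduce everything to controlling the Cauchy remainder
$$
r^{\mu+\nu}\int_M^N t^\nu f(t)\,j_\alpha(rt)\,dt
$$
for large $M$, and to handle this via integration by parts against the antiderivative $g^\nu_{\alpha,r}$ of $t^\nu j_\alpha(rt)$ introduced in \eqref{primformula}. For fixed $r>0$ and fixed $M\geq 1$, the partial integrals over $[0,M]$ are well-defined because $t^\nu f(t)\in L^1(0,1)$ and $f$ is locally integrable on $(0,\infty)$; the convergence problem is therefore entirely about the tail.

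On $[M,N]$ the function $f$ is of bounded variation, so Riemann--Stieltjes integration by parts gives
$$
\int_M^N t^\nu f(t)\,j_\alpha(rt)\,dt = f(N)g^\nu_{\alpha,r}(N) - f(M)g^\nu_{\alpha,r}(M) - \int_M^N g^\nu_{\alpha,r}(t)\,df(t).
$$
Applying the uniform estimate of Lemma~\ref{mainlemma} to each of the three terms and multiplying by $r^{\mu+\nu}$ yields
$$
\Bigl|\,r^{\mu+\nu}\int_M^N t^\nu f(t)\,j_\alpha(rt)\,dt\,\Bigr| \lesssim r^{\mu+\nu-\alpha-3/2}\Bigl(N^{\nu-\alpha-1/2}|f(N)| + M^{\nu-\alpha-1/2}|f(M)| + \int_M^N t^{\nu-\alpha-1/2}\,|df(t)|\Bigr).
$$
Hypothesis \eqref{pointwisecond} forces the expression in parentheses to tend to $0$ as $M\to\infty$ uniformly in $N>M$, which already gives pointwise convergence for every $r>0$, since the prefactor $r^{\mu+\nu-\alpha-3/2}$ is then a finite constant.

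To conclude, I would dispose of the three uniform-convergence cases by inspecting the sign of the exponent $\mu+\nu-\alpha-3/2$: when it is positive, $r^{\mu+\nu-\alpha-3/2}$ is bounded by $\varepsilon^{\mu+\nu-\alpha-3/2}$ on $[0,\varepsilon]$; when it is negative, it is bounded by $\varepsilon^{\mu+\nu-\alpha-3/2}$ on $[\varepsilon,\infty)$; and when it is zero, the prefactor is identically $1$ on $\R_+$. In each case the Cauchy remainder vanishes uniformly on the stated set.

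The only non-routine ingredient is the uniform primitive bound of Lemma~\ref{mainlemma}, which avoids having to split into the subcases of \eqref{primformula}; once it is invoked, the estimate collapses into the single clean display above, and there is essentially no combinatorial or asymptotic obstacle left.
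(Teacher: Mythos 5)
Your proof is correct and follows essentially the same route as the paper: Riemann--Stieltjes integration by parts against $g^\nu_{\alpha,r}$, the uniform bound of Lemma~\ref{mainlemma}, and a sign analysis of the exponent $\mu+\nu-\alpha-3/2$ to sort out the three uniform-convergence cases. The only cosmetic difference is that you derive pointwise convergence directly from the Cauchy-remainder estimate at fixed $r$, whereas the paper first runs the integration by parts on $[1,M]$ separately; the content is identical.
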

	
		\begin{remark}\label{rmkpointwise}
			\begin{enumerate}[label=(\roman*)]
				\item Note that in the extremal case $\mu+\nu=\alpha+3/2$, the conditions \eqref{pointwisecond} are equivalent to \eqref{cond1} and \eqref{cond2}.
			\item In the case $\nu\geq \alpha+1/2$, if $f$ vanishes at infinity the convergence of $\int_1^\infty t^{\nu-\alpha-1/2}|df(t)|$ implies that $M^{\nu-\alpha-1/2}f(M)\to 0$ as $M\to \infty$. Indeed,
			$$
			M^{\nu-\alpha-1/2}|f(M)| \leq M^{\nu-\alpha-1/2}\int_M^\infty |df(t)|\leq \int_M^\infty t^{\nu-\alpha-1/2}\, |df(t)|,
			$$
			and the right hand side of the latter vanishes as $M\to\infty $. Thus, in this case we only need to assume the convergence of $\int_1^\infty t^{\nu-\alpha-1/2}|df(t)|$ in Proposition~\ref{pointwise}.
				\end{enumerate}
		\end{remark}
	For functions satisfying the $GM$ property, we can derive a version of Proposition~\ref{pointwise} depending on integrability conditions of $f$, which are less restrictive than those from Proposition~\ref{unifrough}. 
	\begin{corollary}\label{corolpoint}
		Let $f\in GM$ be such that $t^\nu f(t)\in L^1(0,1)$. If $t^{\nu-\alpha-3/2}f(t)\in L^1(1,\infty)$, all the statements of Proposition~\ref{pointwise} hold.
	\end{corollary}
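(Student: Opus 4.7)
The plan is to show that the two hypotheses \eqref{pointwisecond} required by Proposition~\ref{pointwise} already follow from the $GM$ property together with $t^{\nu-\alpha-3/2}f(t)\in L^1(1,\infty)$, after which Corollary~\ref{corolpoint} is immediate.

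First, for the variational hypothesis, I would apply the chain of inequalities carried out in \eqref{gmvariation} to obtain
$$
\int_1^\infty t^{\nu-\alpha-1/2}|df(t)|\lesssim \int_{1/(2\lambda)}^\infty s^{\nu-\alpha-3/2}|f(s)|\, ds.
$$
I would then split the right-hand side at $s=1$. The tail $\int_1^\infty s^{\nu-\alpha-3/2}|f(s)|\, ds$ is finite by hypothesis. On the bounded interval $[1/(2\lambda),1]$ one has $s\asymp 1$, so $s^{\nu-\alpha-3/2}|f(s)|\asymp|f(s)|$, and the corresponding integral is finite by the standing assumption that $f$ is locally integrable on $(0,\infty)$ (equivalently, because $t^\nu f(t)\in L^1(0,1)$ and $t^\nu\asymp 1$ on that interval).

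Next, for the pointwise decay $M^{\nu-\alpha-1/2}|f(M)|\to 0$, I would invoke the $GM$ estimate \eqref{gmest} to write
$$
M^{\nu-\alpha-1/2}|f(M)|\lesssim M^{\nu-\alpha-3/2}\int_{M/\lambda}^{\lambda M}|f(t)|\, dt\asymp \int_{M/\lambda}^{\lambda M} t^{\nu-\alpha-3/2}|f(t)|\, dt,
$$
where I used $t\asymp M$ on the integration interval. Since $t^{\nu-\alpha-3/2}f(t)\in L^1(1,\infty)$, this is a shrinking tail of an integrable function and tends to $0$ as $M\to\infty$.

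With both conditions in \eqref{pointwisecond} verified, I would finish by applying Proposition~\ref{pointwise}, whose three statements on pointwise and uniform convergence transfer verbatim to the present hypotheses. The argument presents no genuine obstacle: the work is entirely packaged into the previously established replacements \eqref{gmest} and \eqref{gmvariation}, and what remains is bookkeeping to ensure that the lower limit $1/(2\lambda)$ and the transition from local integrability to the hypothesized tail integrability are handled consistently.
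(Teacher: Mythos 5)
Your proposal is correct and follows the paper's own argument: the paper likewise deduces the decay $M^{\nu-\alpha-1/2}|f(M)|\to 0$ from \eqref{gmest} and the variational condition from \eqref{gmvariation}, and then applies Proposition~\ref{pointwise}. Your version merely spells out the bookkeeping (splitting at $s=1$ and using $t\asymp M$ on $[M/\lambda,\lambda M]$) that the paper leaves implicit.
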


	\begin{proof}[Proof of Proposition~\ref{pointwise}]
	We fix $r>0$. Since $t^{\nu}f(t)\in L^1(0,1)$, the convergence of \eqref{generaloperator} is equivalent to
	$$
	\lim_{M\to \infty} \bigg|\int_1^M t^\nu f(t) j_\alpha(rt)\, dt\bigg|<\infty.
	$$
	Note that condition $t^{\nu-\alpha-1/2}|f(t)| \to 0$ as $t\to \infty$ implies that the integrand $t^\nu f(t) j_\alpha(rt)$ vanishes as $t\to \infty$. Integrating by parts, we have
	\begin{align*}
	\int_1^M t^\nu f(t)j_\alpha(rt)\, dt = g^\nu_{\alpha,r}(M)f(M)-g^\nu_{\alpha,r}(1)f(1)- \int_1^M g^\nu_{\alpha,r}(t)\, df(t),
	\end{align*}
	where $g^\nu_{\alpha,r}(t)$ is given by \eqref{primformula}. Now we estimate each term of the latter expression (note that $g_{\alpha,r}^\nu(1) f(1)$ is bounded). It follows from \eqref{Primest} and \eqref{pointwisecond} that
	$$
	|g^\nu_{\alpha,r}(M)f(M)|\lesssim \frac{M^{\nu-\alpha-1/2}}{r^{\alpha+3/2}}|f(M)| \to 0 \quad \text{as }M\to \infty.$$
	Finally, 
	$$
	\int_1^M |g_{\alpha,r}^\nu (t) \, df(t)|\lesssim \frac{1}{r^{\alpha+3/2}}\int_1^M t^{\nu-\alpha-1/2}\, |df(t)| .
	$$
	Thus, the condition $\int_1^\infty t^{\nu-\alpha-1/2}|df(t)|<\infty$ implies that the integral
	$$
	\int_1^\infty |g_{\alpha,r}^\nu (t) \, df(t)|
	$$
	converges, which concludes the part concerning pointwise convergence.
	
	The statement related to uniform convergence is easily proved by simply applying estimates \eqref{est} and \eqref{Primest} to the Cauchy remainder:
	\begin{align*}
	r^{\mu}\bigg|\int_M^N  (rt)^\nu f(t) j_\alpha(rt)\, dt\bigg|&=r^{\mu+\nu} \bigg| g_{\alpha,r}^\nu(N) f(N)- g_{\alpha,r}^\nu(M)f(M)\nonumber -\int_M^N g_{\alpha,r}^\nu(t)\, df(t)\bigg| \nonumber\\ &\lesssim r^{\mu+\nu-\alpha-3/2}\bigg( N^{\nu-\alpha-1/2}|f(N)|+M^{\nu-\alpha-1/2}|f(M)|\nonumber \\ 
	&\phantom{=}+ \int_M^N t^{\nu-\alpha-1/2}\, |df(t)|\bigg).
	\end{align*}
	Thus, the latter expression vanishes 
	\begin{enumerate}
		\item uniformly in $r\in [0,\varepsilon]$ if $\mu+\nu-\alpha-3/2>0$;
		\item uniformly in $r\in [\varepsilon,\infty)$ if $\mu+\nu-\alpha-3/2<0$;
		\item uniformly in $r\in[0,\infty)$ if $\mu+\nu-\alpha-3/2=0$,
	\end{enumerate}
	as $N>M\to \infty$.
 	\end{proof}
 	\begin{proof}[Proof of Corollary~\ref{corolpoint}]
			First of all, note that if $f\in GM$, the condition $t^{\nu-\alpha-3/2}f(t)\in L^1(1,\infty)$ implies that $t^{\nu-\alpha-1/2}f(t)$ vanishes at infinity (see \eqref{gmest}). Furthermore, by \eqref{gmvariation}, we have that all hypotheses of Proposition~\ref{pointwise} are satisfied, and the result follows. 		
 	\end{proof}

Our last statement of this subsection is just a combination of Propositions~\ref{unifrough}~and~\ref{pointwise}.
\begin{corollary}\label{roughcorol}
	Let $f$ be such that $t^\nu f(t)\in L^1(0,1)$. Assume that $\alpha+1/2\leq \mu+\nu< \alpha+3/2$. If the conditions \eqref{pointwisecond} hold, and if $t^{\nu-\alpha-1/2} f(t) \in L^1(1,\infty)$, then $\mathcal{L}^\alpha_{\nu,\mu} f$ converges uniformly.
\end{corollary}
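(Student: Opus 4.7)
The plan is to prove Corollary~\ref{roughcorol} by decomposing the range $[0,\infty)$ into two pieces, say $[0,\varepsilon]$ and $[\varepsilon,\infty)$ for any fixed $\varepsilon>0$, and obtaining uniform convergence on each piece by invoking Propositions~\ref{unifrough} and~\ref{pointwise} respectively. The hypothesis $\alpha+1/2\le\mu+\nu<\alpha+3/2$ is precisely designed so that one endpoint is covered by the first proposition and the other by the second. Thus essentially no new computation is required: the task is to check that the hypotheses of each proposition are met in the relevant subrange.

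First, I would handle the boundary case $\mu+\nu=\alpha+1/2$ separately. Here the two integrability hypotheses $t^\nu f(t)\in L^1(0,1)$ and $t^{\nu-\alpha-1/2}f(t)\in L^1(1,\infty)$ are exactly what Proposition~\ref{unifrough}(3) asks for, and that proposition immediately yields uniform convergence on all of $\R_+$, so \eqref{pointwisecond} is not even needed in this case.

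Next, for $\alpha+1/2<\mu+\nu<\alpha+3/2$, fix an arbitrary $\varepsilon>0$. Since $\mu+\nu-\alpha-1/2>0$, Proposition~\ref{unifrough}(2), applied with the integrability conditions $t^\nu f(t)\in L^1(0,1)$ and $t^{\nu-\alpha-1/2}f(t)\in L^1(1,\infty)$, delivers uniform convergence of $\mathcal{L}^\alpha_{\nu,\mu}f$ on $[0,\varepsilon]$. Simultaneously, since $\mu+\nu-\alpha-3/2<0$, Proposition~\ref{pointwise}(2), applied with the variational hypotheses \eqref{pointwisecond} (together with $t^\nu f(t)\in L^1(0,1)$), delivers uniform convergence on $[\varepsilon,\infty)$. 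Combining these two conclusions yields uniform convergence on $\R_+$.

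There is no genuine obstacle here; the only subtlety is a bookkeeping one, namely to verify that the endpoint $\mu+\nu=\alpha+1/2$ is indeed covered (which is why I isolate that case and appeal to part (3) of Proposition~\ref{unifrough} rather than part (2)), and to make clear that the overlap of the two propositions on the two intervals $[0,\varepsilon]$ and $[\varepsilon,\infty)$ suffices for uniform convergence on the union. One could alternatively write a single unified estimate of the Cauchy remainder by splitting it as in the proofs of Propositions~\ref{unifrough}~and~\ref{pointwise}, using \eqref{est} for the small-$r$ regime and \eqref{Primest} combined with integration by parts for the large-$r$ regime, but this would just reproduce the content of those propositions.
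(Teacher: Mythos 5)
Your argument is correct and is exactly what the paper does: the paper presents Corollary~\ref{roughcorol} as ``just a combination of Propositions~\ref{unifrough}~and~\ref{pointwise}'', which is precisely your splitting into $[0,\varepsilon]$ and $[\varepsilon,\infty)$ (with the endpoint $\mu+\nu=\alpha+1/2$ absorbed by Proposition~\ref{unifrough}(3)). No discrepancies to report.
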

Note that except for the case $\alpha=-1/2$ and $\mu+\nu=0$, the parameters for which Corollary~\ref{roughcorol} can be applied correspond to sine-type transforms.

\subsection{Examples}

Let us discuss an application of Proposition~\ref{pointwise}, which is closely related to the following classical result \cite[Ch. I, Theorem 2.6]{zygmund} (see also \cite[Ch. I, \textsection 30]{bari}):
{\it Let $\varphi(x)$ be either $\sin x$ or $\cos x$. If $a_n\to 0$ and $\{a_n\}\in BV$, or equivalently,
	$$
	\sum_{n=N}^\infty |a_n-a_{n+1}|=o(1)\quad \text{as }N\to \infty,
	$$
	then $\sum_{n=0}^\infty a_n \varphi(nx)$ converges pointwise  in $x\in(0,2\pi)$, and the convergence is uniform on any interval $[\varepsilon, 2\pi-\varepsilon]$, $\varepsilon>0$.
}

A version of the latter statement for the sine and cosine transforms follows from Proposition~\ref{pointwise} (see item 2 of the latter, and note that for the sine and cosine transforms both conditions $\mu+\nu-\alpha-3/2<0$ and $\nu-\alpha-1/2=0$ hold).
\begin{thmletter}
	Let $f,g:\R_+\to \C$ be vanishing at infinity and such that $f\in L^1(0,1)$ and $tg(t)\in L^1(0,1)$. Assume that $f$ and $g$ are of bounded variation on $[\delta,\infty)$ for some $\delta>0$. Then, $\hat{f}_{\cos}(r)$ and $\hat{g}_{\sin}(r)$ converge for every $r>0$, and the convergence is uniform on every interval $[\varepsilon,\infty)$, with $\varepsilon>0$.
\end{thmletter}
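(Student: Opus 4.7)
The plan is to recognize the final theorem as a direct corollary of Proposition~\ref{pointwise}, item~2, applied separately to the cosine and sine transforms. First, I identify the relevant parameters: $\hat{f}_{\cos}$ corresponds to $\mathcal{L}^{-1/2}_{0,0}f$ (so $\alpha=-1/2$, $\nu=\mu=0$), while $\hat{g}_{\sin}$ corresponds to $\mathcal{L}^{1/2}_{1,0}g$ (so $\alpha=1/2$, $\nu=1$, $\mu=0$). A key computational remark makes everything fall into place: in both cases one has $\nu-\alpha-1/2=0$ and $\mu+\nu-\alpha-3/2=-1<0$, placing us precisely in the regime in which item~2 of Proposition~\ref{pointwise} produces pointwise convergence for $r>0$ together with uniform convergence on $[\varepsilon,\infty)$.

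Next I would verify the hypotheses of Proposition~\ref{pointwise} for each transform. The conditions $t^\nu f(t)=f(t)\in L^1(0,1)$ (cosine case) and $t^\nu g(t)=tg(t)\in L^1(0,1)$ (sine case) are hypotheses of the theorem. Since $\nu-\alpha-1/2=0$ in both cases, the requirement $M^{\nu-\alpha-1/2}|f(M)|\to 0$ reduces to $|f(M)|\to 0$, which is exactly the vanishing at infinity assumption (and similarly for $g$). The remaining condition $\int_1^\infty t^{\nu-\alpha-1/2}|df(t)|<\infty$ becomes simply $\int_1^\infty|df(t)|<\infty$; this follows by splitting the integral at $\max\{1,\delta\}$: on the tail $[\max\{1,\delta\},\infty)$ the total variation is finite by hypothesis, and on the intermediate interval $[1,\delta]$ (if $\delta>1$) the standing assumption that $f$ is locally of bounded variation on $(0,\infty)$ gives $\int_1^\delta|df(t)|<\infty$. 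The same reasoning applies to $g$.

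Once the hypotheses are verified, I invoke item~2 of Proposition~\ref{pointwise} separately for the cosine transform (with parameters $\alpha=-1/2$, $\nu=\mu=0$) and for the sine transform (with $\alpha=1/2$, $\nu=1$, $\mu=0$), and the conclusion is immediate. There is no real obstacle in this proof; the only content is the observation that for the sine and cosine transforms the parameter identities $\nu-\alpha-1/2=0$ and $\mu+\nu-\alpha-3/2=-1$ hold simultaneously, which collapses the variational conditions of Proposition~\ref{pointwise} to the bare bounded-variation assumption on a half-line, and places the transforms in the uniform-convergence regime of item~2.
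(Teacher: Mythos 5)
Your proposal is correct and follows exactly the paper's own route: the paper derives this statement from item~2 of Proposition~\ref{pointwise}, noting that for the cosine ($\alpha=-1/2$, $\nu=\mu=0$) and sine ($\alpha=1/2$, $\nu=1$, $\mu=0$) transforms one has $\nu-\alpha-1/2=0$ and $\mu+\nu-\alpha-3/2<0$, so the hypotheses collapse to vanishing at infinity plus bounded variation on a half-line. Your additional verification of the tail-variation condition via the standing local bounded variation assumption is a harmless elaboration of the same argument.
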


Finally, we give an example showing that we cannot guarantee the uniform convergence of $\mathcal{L}^\alpha_{\nu,\mu}f$ on $\R_+$ outside the range of parameters $0\leq \mu+\nu\leq \alpha+3/2$, whenever $f$ satisfies both conditions from \eqref{pointwisecond}. The case $\mu+\nu<0$ is clear, since in this case $\mathcal{L}^\alpha_{\nu,\mu}f(0)$ is not even defined. The case $\mu+\nu>\alpha+3/2$ is more involved.

Let 
$$
f(t)=\begin{cases}
t^{-\nu},&\text{if }t<2,\\
\dfrac{t^{-\nu+\alpha+1/2}}{ (\log t)^2}, &\text{if }t\geq 2.
\end{cases}
$$
On the one hand, since for any $\nu\in \R$ and $\alpha\geq -1/2$ one has
$$
f'(t)= (-\nu+\alpha+1/2)\frac{t^{-\nu-1/2+\alpha}}{(\log t)^2}-2\frac{t^{-\nu-1/2+\alpha}}{(\log t)^3},
$$
it is clear that
$$
\int_1^\infty t^{\nu-\alpha-1/2}|df(t)|\leq 1+\int_2^\infty t^{\nu-\alpha-1/2}|f'(t)|\, dt \lesssim \int_2^\infty \frac{1}{t(\log t)^2}\, dt<\infty.
$$
On the other hand, for $t\geq 2$
$$
t^{\nu-\alpha-1/2}f(t)=\frac{1}{(\log t)^2}\to 0 \qquad \text{as }t\to \infty,
$$
and hence $f$ satisfies both conditions from \eqref{pointwisecond}. Let us now prove that $\mathcal{L}^\alpha_{\nu,\mu}f$ does not converge uniformly on $\R_+$. Let $2<M<N$. Integration by parts along with property \eqref{dj} of $j_\alpha$ yields the following equality:
\begin{align*}
r^{\mu+\nu}\bigg|\int_M^N t^\nu f(t)j_{\alpha}(rt)\, dt \bigg| & =r^{\mu+\nu} \bigg|\frac{1}{2\alpha+2} \bigg[\frac{t^{\alpha+3/2}}{(\log t)^2}j_{\alpha+1}(rt) \bigg]_M^N \\ 
&\phantom{=} +\frac{\alpha+1/2}{2\alpha+2}\int_M^N  \frac{t^{\alpha+1/2}}{(\log t)^2}j_{\alpha+1}(rt)\, dt \\
&\phantom{=} + \frac{2}{2\alpha+2}\int_M^N \frac{t^{\alpha+1/2}}{(\log t)^3}j_{\alpha+1}(rt)\, dt \bigg|=: a_0+b_0+c_0
\end{align*}

First,
$$
a_0\asymp r^{\mu+\nu} \bigg| \frac{N^{\alpha+3/2}}{(\log N)^2}j_{\alpha+1}(rN)-\frac{M^{\alpha+3/2}}{(\log M)^2}j_{\alpha+1}(rM)\bigg|.
$$
If we choose $r=(\log M)^{2/(\mu+\nu-\alpha-3/2)}$ and $M$ so that $j_{\alpha+1}(rM)\asymp (rM)^{-\alpha-3/2}$ (such $M$ can be found through \eqref{expansionatinfty}), we obtain by letting $N\to \infty$,
$$
a_0\asymp \frac{r^{\mu+\nu-\alpha-3/2}}{(\log M)^2} =1.
$$ 
We now prove that both terms $b_0$ and $c_0$ vanish as $N>M\to \infty$ (for this particular choice of $r$). If we prove such claim, then it follows that $\mathcal{L}^\alpha_{\nu,\mu}f$ does not converge uniformly on $\R_+$. Let us proceed to estimate $b_0$ from above first. Using again integration by parts and \eqref{dj}, we obtain
\begin{align*}
r^{\mu+\nu}\bigg|\int_M^N  \frac{t^{\alpha+1/2}}{(\log t)^2}j_{\alpha+1}(rt)\, dt \bigg|&=r^{\mu+\nu}\bigg| \frac{1}{2\alpha+4}\bigg[ \frac{t^{\alpha+3/2}}{(\log t)^2} j_{\alpha+2}(rt)\bigg]_M^N \\
&\phantom{=}  +\frac{\alpha+5/2}{2\alpha+4} \int_M^N \frac{t^{\alpha+1/2}}{(\log t)^2}j_{\alpha+2}(rt)\, dt \\
&\phantom{=}+\frac{2}{2\alpha+4}\int_M^N \frac{t^{\alpha+1/2}}{(\log t)^3}j_{\alpha+2}(rt)\, dt \bigg|=: a_1+b_1+c_1.
\end{align*}
By \eqref{est}, it is clear that
$$
a_1\lesssim r^{\mu+\nu-\alpha-5/2}\bigg( \frac{1}{M(\log M)^2}+ \frac{1}{N(\log N)^2} \bigg) \lesssim  \frac{(\log M)^{2\big|\frac{\mu+\nu-\alpha-5/2}{\mu+\nu-\alpha-3/2}\big|}}{M} \to 0
$$
as $N>M\to \infty$, as for $b_1,c_1$, we note that
\begin{align*}
b_1+c_1 & \lesssim r^{\mu+\nu}\int_{M}^{N}\frac{t^{\alpha+1/2}}{(\log t)^2}|j_{\alpha+2}(rt)|\, dt \lesssim r^{\mu+\nu-\alpha-5/2}\int_M^N \frac{1}{t^{2}(\log t)^2}\, dt \\
&\leq \frac{r^{\mu+\nu-\alpha-5/2}}{M}\leq \frac{(\log M)^{2\big|\frac{\mu+\nu-\alpha-5/2}{\mu+\nu-\alpha-3/2}\big|}}{M} \to 0
\end{align*}
as $N>M\to \infty$. Let us now inspect the term $c_0$. Once again, integration by parts and \eqref{est} yield
\begin{align*}
r^{\mu+\nu} \bigg|\int_M^N  \frac{t^{\alpha+1/2}}{(\log t)^3}j_{\alpha+1}(rt)\, dt \bigg| \lesssim r^{\mu+\nu}\bigg( \bigg|\bigg[ \frac{t^{\alpha+3/2}}{(\log t)^3} j_{\alpha+2}(rt)\bigg]_M^N\bigg| +\int_M^N \frac{t^{\alpha+1/2}}{(\log t)^3}|j_{\alpha+2}(rt)|\, dt \bigg),
\end{align*}
and it can be shown similarly as above that the latter vanishes as  $N>M\to \infty$.  Therefore, we conclude that $\mathcal{L}^\alpha_{\nu,\mu} f$ does not converge uniformly.

	\section{Uniform convergence of $\mathcal{L}^\alpha_{\nu,\mu}f$ with $\mu+\nu=0$}\label{secu.c.}

	In the present section we investigate necessary and sufficient conditions for the uniform convergence of the transforms $\mathcal{L}^\alpha_{\nu,\mu}f$ with $\mu+\nu=0$ (or equivalently, $\mathcal{L}^\alpha_{\nu,-\nu}f$), as for example, the Hankel transform.
	\subsection{Main Results}
	Additionally to Theorem~\ref{thm1}, we have other uniform convergence criteria for cosine-type transforms that will be stated and proved in this section, namely Theorems~\ref{costhmvanish}~and~\ref{thm1.5}. The former is a direct consequence of Theorem~\ref{thm1} and relies on hypotheses involving the variation of $f$, whilst the latter only depends on the continuity of $f$ and its asymptotic behaviour at infinity.
		
	\begin{remark}
		\begin{enumerate}[label=(\roman*)]
			\item In Theorem~\ref{thm1} we omit the simple case $f\geq 0$, since the uniform convergence of $\mathcal{L}^\alpha_{\nu,-\nu}f$ is clearly equivalent to $t^{\nu}f(t) \in L^1(\R_+)$.
			\item The criterion for the uniform convergence of the Hankel transform can be derived by letting $\nu = 2\alpha+1$ in Theorem~\ref{thm1}, i.e., \textit{if \eqref{thm>} holds, then $H_\alpha f(r)$ converges uniformly if and only if}
			$$
			\bigg|\int_0^\infty t^{2\alpha+1}f(t)\, dt \bigg|<\infty.
			$$		
		\end{enumerate}
	\end{remark}	

In Theorem~\ref{thm1} we do not require that $f$ vanishes at infinity. For functions satisfying the latter property, we have the following simplified statement.

	\begin{theorem}
		\label{costhmvanish}
		Let $\nu\in \R$ and $\mu=-\nu$.  Let $f$ be vanishing at infinity and such that $t^\nu f(t)\in L^1(0,1)$.  Assume that
		\begin{align}
		\int_M^\infty |df(t)| &= o\big( M^{-\nu-1}\big) \quad \text{as }M\to \infty, & \text{if }\nu<\alpha+1/2 \text{ and }\nu>-1, \label{costhmsmall}\\
		\int_M^\infty t^{\nu-\alpha-1/2}|df(t)| &=o\big(M^{-\alpha-3/2}\big) \quad \text{as }M\to \infty, &\text{if }\nu\geq \alpha+1/2\text{ or }\nu\leq -1.\label{costhmbig}
		\end{align}
			Then, condition \eqref{necsuf} is necessary and sufficient  to guarantee the uniform convergence of $\mathcal{L}^\alpha_{\nu,\mu}f(r)$ on $\R_+$.
	\end{theorem}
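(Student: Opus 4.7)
The plan is to reduce Theorem~\ref{costhmvanish} to Theorem~\ref{thm1} by verifying that both \eqref{thm<} and \eqref{thm>} follow from the stated vanishing-at-infinity assumption together with the relevant variation hypothesis; then the equivalence with \eqref{necsuf} is immediate from Theorem~\ref{thm1}. The argument splits along the two regimes of the statement, but in each case it ultimately rests on the monotonicity of the weight $t^{\nu-\alpha-1/2}$.

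Consider first the regime $\nu\geq \alpha+1/2$ or $\nu\leq -1$, for which \eqref{costhmbig} coincides with \eqref{thm>}, so only \eqref{thm<} needs to be checked. If $\nu\geq \alpha+1/2$, the weight is non-decreasing, so the elementary bound $1\leq (t/M)^{\nu-\alpha-1/2}$ on $[M,\infty)$ combined with $f(\infty)=0$ yields
$$|f(M)|\leq \int_M^\infty |df(t)|\leq M^{-(\nu-\alpha-1/2)}\int_M^\infty t^{\nu-\alpha-1/2}|df(t)|=o(M^{-\nu-1}).$$
If instead $\nu\leq -1$, then $-\nu-1\geq 0$ gives $M^{-\nu-1}\geq 1$ for $M\geq 1$, so the vanishing of $f$ at infinity alone gives $|f(M)|=o(1)=o(M^{-\nu-1})$, and \eqref{thm<} holds trivially.

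In the remaining regime $-1<\nu<\alpha+1/2$, covered by \eqref{costhmsmall}, both \eqref{thm<} and \eqref{thm>} must be extracted from the single hypothesis $\int_M^\infty |df(t)|=o(M^{-\nu-1})$. The bound $|f(M)|\leq \int_M^\infty |df(t)|$ gives \eqref{thm<} at once. For \eqref{thm>}, the weight $t^{\nu-\alpha-1/2}$ is now decreasing, so the trivial comparison $t^{\nu-\alpha-1/2}\leq M^{\nu-\alpha-1/2}$ on $[M,\infty)$ produces
$$\int_M^\infty t^{\nu-\alpha-1/2}|df(t)|\leq M^{\nu-\alpha-1/2}\int_M^\infty |df(t)|=o\bigl(M^{\nu-\alpha-1/2}\cdot M^{-\nu-1}\bigr)=o(M^{-\alpha-3/2}).$$
No substantial obstacle is anticipated: the whole argument is essentially monotonicity bookkeeping. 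The only mild subtlety is to recognize that each of the two sub-regimes in the hypothesis provides precisely the ``right'' direction of comparison for the power weight $t^{\nu-\alpha-1/2}$, so that the variation tail condition can be traded for its weighted counterpart (or the reverse) without loss, and that the split point $\nu=\alpha+1/2$ is absorbed by the $\nu\geq \alpha+1/2$ clause while $\nu=-1$ is absorbed by $\nu\leq -1$, leaving no uncovered value of the parameter.
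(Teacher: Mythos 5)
Your proposal is correct and follows essentially the same route as the paper: reduce to Theorem~\ref{thm1} by using $|f(M)|\leq\int_M^\infty|df(t)|$ together with the monotonicity of the weight $t^{\nu-\alpha-1/2}$ to derive \eqref{thm<} and \eqref{thm>} in each parameter regime. The case analysis and the comparison inequalities you use match the paper's proof step for step.
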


	We can give an alternative statement to Theorem~\ref{thm1} for $GM$ functions. 
	\begin{corollary}\label{CORcosgm}
		Let $f\in GM$ be real-valued and such that $tf(t)\in L^1(0,1)$. Then $\mathcal{L}^\alpha_{\nu,-\nu}f$ converges uniformly if and only if \eqref{necsuf} holds.
	\end{corollary}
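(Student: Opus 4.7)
The strategy is to deduce the corollary from Theorem~\ref{thm1} by using the $GM$ property of $f$ to verify the variational hypotheses \eqref{thm<} and \eqref{thm>}. Necessity is immediate: since $\mu+\nu=0$ and $j_\alpha(0)=1$, setting $r=0$ gives $\mathcal{L}^\alpha_{\nu,-\nu}f(0)=\int_0^\infty t^\nu f(t)\,dt$, so uniform convergence on $\R_+$ forces the finiteness asserted in \eqref{necsuf}.

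For sufficiency, the plan is to first upgrade convergence of the improper integral $\int_0^\infty t^\nu f(t)\,dt$ to absolute convergence $t^\nu f\in L^1(\R_+)$. This is the main obstacle, and it is where the $GM$ hypothesis is essential: since the total variation of $f$ on each dyadic ring $[x,2x]$ is controlled by the local $L^1$ norm of $f$, a real-valued $GM$ function cannot produce conditional convergence via large cancellations in dyadic windows, and the standard argument in the $GM$ literature (see, e.g., \cite{LTnach,LTparis,TikGM}) converts conditional into absolute convergence in this setting.

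Once $t^\nu f\in L^1(\R_+)$ is available, condition \eqref{thm<} follows from \eqref{gmest}: multiplying the pointwise $GM$ bound by $M^{\nu+1}$ and using $t\asymp M$ on $[M/\lambda,\lambda M]$ gives
$$
M^{\nu+1}|f(M)|\lesssim \int_{M/\lambda}^{\lambda M}t^\nu|f(t)|\,dt=o(1),
$$
since the tails of an absolutely integrable function vanish. Similarly, the variation estimate \eqref{gmvariation} applied on $[M,\infty)$ yields
$$
\int_M^\infty t^{\nu-\alpha-1/2}|df(t)|\lesssim \int_{M/(2\lambda)}^\infty s^{\nu-\alpha-3/2}|f(s)|\,ds\lesssim M^{-\alpha-3/2}\int_{M/(2\lambda)}^\infty s^\nu|f(s)|\,ds,
$$
where the last step uses $s\geq M/(2\lambda)$ to trade the negative power $s^{-\alpha-3/2}$ for $M^{-\alpha-3/2}$ up to a constant (valid because $\alpha+3/2>0$). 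The remaining integral is $o(1)$ by absolute integrability, so \eqref{thm>} holds.

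With all hypotheses of Theorem~\ref{thm1} verified, its conclusion is precisely the sufficiency direction of the corollary. The entire argument thus hinges on the preliminary step of absolute integrability; the subsequent verification of the pointwise and variational tail conditions is then a direct consequence of the two $GM$ inequalities \eqref{gmest} and \eqref{gmvariation}.
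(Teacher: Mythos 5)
Your necessity argument and the overall plan of reducing to Theorem~\ref{thm1} agree with the paper, but the step on which your whole sufficiency argument hinges is false: a real-valued $GM$ function with a convergent integral need not be absolutely integrable. For instance, take $f(t)=(-1)^k k^{-1}t^{-\nu-1}$ for $t\in[2^k,2^{k+1})$, $k\ge 1$, and $f=0$ near the origin. Then $f\in GM$ (on $[x,2x]$ with $x\asymp 2^k$ both the continuous variation and the jump at $2^{k+1}$ are $\asymp k^{-1}x^{-\nu-1}$, which is also the size of $x^{-1}\int_{x/2}^{2x}|f|$), and $\int_0^\infty t^\nu f(t)\,dt$ converges since the $k$-th dyadic block contributes $(-1)^k k^{-1}\log 2$, yet $\int_1^\infty t^\nu|f(t)|\,dt=\sum_k k^{-1}\log 2=\infty$. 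There is also a structural reason to distrust the claimed upgrade: for $\mu=-\nu$ the kernel is $t^\nu j_\alpha(rt)$, bounded in modulus by $t^\nu$, so if \eqref{necsuf} really forced $t^\nu f\in L^1(\R_+)$ the Cauchy remainder would be dominated by $\int_M^N t^\nu|f(t)|\,dt$ and the corollary would be trivial without Theorem~\ref{thm1} at all; the entire point of the $GM$ statement is to handle conditionally convergent integrals (for $f\ge 0$ the paper already notes that uniform convergence is simply equivalent to $t^\nu f\in L^1(\R_+)$).

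What the paper uses in place of your first step is a generalized Abel--Olivier lemma: if $g\in GM$ is real-valued and $\int_0^\infty g(t)\,dt$ converges, then $tg(t)\to 0$ at infinity. Applied to $g(t)=t^\nu f(t)$ (again a $GM$ function), this yields exactly \eqref{thm<} --- pointwise decay, not integrability --- and its proof is nontrivial (the paper defers it elsewhere), so it cannot be waved through as ``the standard argument.'' Your verification of \eqref{thm>} can be repaired once \eqref{thm<} is in hand: in your chain of estimates replace the factor $\int_{M/(2\lambda)}^\infty s^{\nu}|f(s)|\,ds$ by $\max_{x\ge M/(2\lambda)}x^{\nu+1}|f(x)|$, estimating $\int_{M/2}^\infty t^{\nu-\alpha-5/2}\int_{t/\lambda}^{\lambda t}|f(s)|\,ds\,dt\lesssim \bigl(\max_{x\ge M/(2\lambda)}x^{\nu+1}|f(x)|\bigr)M^{-\alpha-3/2}$, which is precisely how the paper deduces \eqref{thm>} from \eqref{thm<} for $GM$ functions. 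As written, however, both of your verifications of \eqref{thm<} and \eqref{thm>} lean on the false absolute-integrability claim, so the argument has a genuine gap exactly where the Abel--Olivier-type lemma is needed.
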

	\begin{remark}
		From the proof of the latter it is clear that the same conclusion holds for complex-valued $f\in GM$ if we also assume \eqref{thm<}.
	\end{remark}
	 	
		As mentioned above, we now prove a different criterion that depends on the continuity of $f$ and its behaviour at infinity. Recall that if
		$$
		F_\nu(x) = -\int_x^\infty t^{\nu}f(t)\, dt,
		$$
and  \eqref{necsuf} holds, the continuity of $f$ implies that $F_\nu'(x) = x^{\nu}f(x)$, in virtue of the fundamental theorem of Calculus.
		\begin{theorem}\label{thm1.5}
			Let $f\in C(1,\infty)$ be such that $t^\nu f(t)\in L^1(0,1)$. Assume that $\alpha>1/2$, and that \eqref{thm<} holds.  Then, the transform $\mathcal{L}^\alpha_{\nu,-\nu}f$
			converges uniformly if and only if \eqref{necsuf} is satisfied.
		\end{theorem}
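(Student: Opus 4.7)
The necessity half follows directly: uniform convergence on $\R_+$ yields convergence at $r=0$, and since $j_\alpha(0)=1$, the value of $\mathcal{L}^\alpha_{\nu,-\nu}f(0)$ coincides with $\int_0^\infty t^\nu f(t)\,dt$, which is therefore finite. For sufficiency, assume \eqref{necsuf} and introduce the primitive
$$F_\nu(x) = -\int_x^\infty t^\nu f(t)\,dt,$$
which is well-defined, satisfies $F_\nu(x)\to 0$ as $x\to\infty$, and—by continuity of $f$ on $(1,\infty)$—obeys $F_\nu'(x)=x^\nu f(x)$ pointwise on $(1,\infty)$. The plan is to bound the Cauchy remainder $\int_M^N t^\nu f(t)\, j_\alpha(rt)\,dt$ (for $M\ge 1$) by a quantity tending to $0$ uniformly in $r\ge 0$.

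Integrating by parts with $F_\nu$ as antiderivative, and using the identity $j_\alpha'(z)=-\tfrac{z}{2(\alpha+1)}j_{\alpha+1}(z)$ (a direct consequence of \eqref{dj}), yields
$$\int_M^N t^\nu f(t)\, j_\alpha(rt)\,dt = \bigl[F_\nu(t)\, j_\alpha(rt)\bigr]_M^N + \frac{r^2}{2(\alpha+1)}\int_M^N t\,F_\nu(t)\,j_{\alpha+1}(rt)\,dt.$$
The boundary terms are bounded by $2\sup_{s\ge M}|F_\nu(s)|$ uniformly in $r$, because $|j_\alpha|\le 1$. For the integral term, the crucial step is the substitution $u=rt$, which (for $r>0$) turns it into
$$\frac{1}{2(\alpha+1)}\int_{rM}^{rN} u\,F_\nu(u/r)\,j_{\alpha+1}(u)\,du.$$
Since $u/r\ge M$ throughout the range of integration, $|F_\nu(u/r)|\le \sup_{s\ge M}|F_\nu(s)|$ factors out, leaving the universal (i.e. $r$-independent) integral $\int_0^\infty u\,|j_{\alpha+1}(u)|\,du$.

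The hypothesis $\alpha>1/2$ enters precisely here, as it guarantees finiteness of this last integral: by \eqref{est}, $u|j_{\alpha+1}(u)|\lesssim \min\{u,\,u^{-\alpha-1/2}\}$, which is integrable at infinity exactly when $\alpha+1/2>1$. Combining the estimates, the Cauchy remainder is dominated by $C\sup_{s\ge M}|F_\nu(s)|$, uniformly in $r>0$, and this quantity tends to $0$ as $M\to\infty$. The boundary case $r=0$ is handled directly, since there the remainder equals $F_\nu(N)-F_\nu(M)\to 0$. The decisive step — and the main conceptual obstacle — is realizing that the change of variables $u=rt$ decouples the oscillation of $j_{\alpha+1}$ from the parameter $r$, reducing the problem to the uniform smallness of the tail of $F_\nu$ tested against a universal convergent integral; the restriction $\alpha>1/2$ is exactly the threshold of this universal integrability, which is why the argument is genuinely tied to that regime.
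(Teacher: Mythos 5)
Your proof is correct, but it follows a genuinely different route from the paper's. The paper also integrates by parts against the primitive $F_\nu$, but then splits into the cases $r\geq 1/M$ and $r<1/M$: for $r\geq 1/M$ it uses the estimate $\big|\tfrac{d}{dt}j_\alpha(rt)\big|\lesssim t^{-\alpha-1/2}r^{1/2-\alpha}$ (valid only for $rt\geq 1$) together with $\alpha>1/2$ to make $\int_M^\infty t^{-\alpha-1/2}\,dt$ converge, while for $r<1/M$ it splits the integral at $t=1/r$ and controls the piece with $rt\leq 1$ via $|1-j_\alpha(z)|\lesssim z^2$, which is exactly where the hypothesis \eqref{thm<} is invoked. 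You instead use the exact lowering identity $\tfrac{d}{dt}j_\alpha(rt)=-\tfrac{r^2t}{2(\alpha+1)}j_{\alpha+1}(rt)$ and then the substitution $u=rt$, which makes all powers of $r$ cancel and reduces everything to the single universal constant $\int_0^\infty u\,|j_{\alpha+1}(u)|\,du$, finite precisely because $\alpha>1/2$; this yields the uniform bound $C_\alpha\sup_{s\geq M}|F_\nu(s)|$ for all $r>0$ and $N>M$ in one stroke, with no case analysis. A notable consequence is that your argument never uses \eqref{thm<}: sufficiency follows from \eqref{necsuf}, continuity, and $\alpha>1/2$ alone, so you in fact prove a formally stronger statement (the paper needs \eqref{thm<} only to handle the region $rt\leq 1$ in its method). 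Two small remarks: the identity $j_\alpha'(z)=-\tfrac{z}{2(\alpha+1)}j_{\alpha+1}(z)$ is standard and correct, but it is not literally a consequence of \eqref{dj} (which, via \eqref{dj2}, gives the derivative of $j_{\alpha+1}$ in terms of $j_\alpha$, the opposite direction); it follows instead from the power series or from $\tfrac{d}{dz}\big(z^{-\alpha}J_\alpha(z)\big)=-z^{-\alpha}J_{\alpha+1}(z)$. Also, for the integration by parts one should take $M>1$ so that $F_\nu\in C^1[M,N]$, which is harmless since $M\to\infty$.
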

	Note that the range of $\alpha$ for which Theorem~\ref{thm1.5} is valid is reduced compared to the one of Theorem~\ref{thm1}. We also stress that contrarily to Theorems~\ref{thm1}~and~\ref{costhmvanish}, Theorem~\ref{thm1.5} does not require any control on the variation of $f$.
		\begin{remark}
			Whenever $f$ vanishes at infinity and $\nu>-1$, if $\nu<\alpha+1/2$ then \eqref{costhmsmall} implies \eqref{thm<}, and if $\nu\geq \alpha+1/2$, \eqref{costhmbig} implies \eqref{thm<}. However, the converse is not true. Indeed, consider $f(t)=t^{-\nu-2}\sin t$, with $t>1$. It is clear that \eqref{thm<} holds, and thus $\mathcal{L}^\alpha_{\nu,\mu}f$ converges uniformly, but since $f'(t) = -(\nu+2)t^{-\nu-3}\sin t+t^{-\nu-2}\cos t$, one has
			\begin{align*}
			M^{\nu+1}\int_M^\infty |f'(t)|\, dt  &\asymp M^{\nu+1}\int_M^\infty t^{-\nu-2}\, dt \asymp 1, \\
			M^{\alpha+3/2}\int_M^\infty t^{\nu-\alpha-1/2}|f'(t)|\, dt &\asymp M^{\alpha+3/2}\int_M^\infty t^{-\alpha-5/2}\, dt \asymp 1,
			\end{align*}
			for $M>1$, or in other words, neither \eqref{costhmsmall} nor \eqref{costhmbig} hold.
		\end{remark}
	\subsection{Proofs}
		
		\begin{proof}[Proof of Theorem~\ref{thm1}]
			The necessity part follows from the convergence at $r=0$ and the fact that $j_\alpha(0)=1$.
			
			In order to prove the sufficiency part, we show that the Cauchy remainder  \eqref{remainder} vanishes uniformly in $r$ as $N>M\to \infty$.
			
			Let $0<M<N$. If $r\geq 1/M$,  integration by parts yields
			\begin{align*}
			\int_M^N t^\nu f(t) j_\alpha(rt)\, dt = \big[ f(t) g^\nu_{\alpha,r}(t)\big]^N_M - \int_M^N g^\nu_{\alpha,r}(t)\, df(t).
			\end{align*}
			It follows from \eqref{Primest} that
			\begin{align*}	
			\bigg| \int_M^N t^\nu f(t) j_\alpha(rt)\, dt\bigg|&\lesssim \frac{1}{r^{\alpha+3/2}} \max_{x\geq M} x^{\nu-\alpha-1/2}|f(x)| + \frac{1}{r^{\alpha+3/2}}\int_M^N t^{\nu-\alpha-1/2}\, |df(t)| \\
			&	\leq \max_{x\geq M}x^{\nu+1}|f(x)| + M^{\alpha+3/2}\int_M^\infty t^{\nu-\alpha-1/2}\, |df(t)|,
			\end{align*}
			and both terms vanish as $M\to \infty$, by applying  \eqref{thm<}  and \eqref{thm>}.
			
			If $r<1/M$, we write 
			$$
			\int_M^N t^\nu f(t) j_\alpha(rt)\, dt = \bigg( \int_M^{1/r}+\int_{1/r}^N \bigg)  t^\nu  f(t) j_\alpha(rt)\, dt.
			$$
			The integral $\int_{1/r}^N t^\nu  f(t) j_\alpha(rt)\, dt$ can be estimated as above, as for the other integral, we have, by \eqref{smallest}
			\begin{align*}
			\bigg\vert \int_M^{1/r} t^{\nu} f(t) j_\alpha(rt)\, dt\bigg\vert &\leq  \max_{x>M} \bigg\vert \int_M^{x}  t^{\nu} f(t)\, dt \bigg\vert 	 + \bigg\vert \int_M^{1/r} t^{\nu} f(t) (1-j_{\alpha}(rt))\, dt\bigg\vert \\
			&\leq \max_{x>M} \bigg\vert \int_M^{x}  t^{\nu} f(t)\, dt\bigg\vert + r \int_M^{1/r} t^{\nu+1} |f(t)| rt\, dt \\
			& \leq \max_{x>M} \bigg\vert \int_M^{x}  t^{\nu} f(t)\, dt\bigg\vert + \bigg(\max_{x\geq M}x^{\nu+1}|f(x)|\bigg)  \int_M^{1/r}r\, dt \\
			&\leq \max_{x>M} \bigg\vert \int_M^{x}  t^{\nu} f(t)\, dt\bigg\vert + \max_{x\geq M}x^{\nu+1}|f(x)|.
			\end{align*}
			The first term of the latter inequality vanishes as $M\to \infty$ by \eqref{necsuf}, whilst the second term also vanishes as $M\to \infty$, by \eqref{thm<}.
		\end{proof}
		
		\begin{proof}[Proof of Theorem~\ref{costhmvanish}]
			Observe that since $f$ is vanishing at infinity, we have that $|f(x)|\leq \int_x^\infty |df(t)|$ for all $x$.
			
			Let us first consider the case $\nu<\alpha+1/2$. On the one hand,
			$$M^{\nu+1}|f(M)|\leq M^{\nu+1}\int_M^\infty |df(t)| \to 0 \quad \text{as }M\to \infty.$$
			On the other hand,
			$$
			M^{\alpha+3/2}\int_M^\infty t^{\nu-\alpha-1/2}|df(t)|\leq M^{\nu+1}\int_M^\infty |df(t)|\to 0 \quad \text{as }M\to \infty,
			$$
			and the result follows, since we are under the conditions of Theorem~\ref{thm1}. 
			
			If $\nu\geq \alpha+1/2$,
			$$M^{\nu+1}|f(M)|\leq M^{\nu+1}\int_M^\infty |df(t)|\leq M^{\alpha+3/2} \int_M^\infty t^{\nu-\alpha-1/2}|df(t)|\to 0 \quad \text{as }M\to \infty,$$
			i.e., we are under the conditions of Theorem~\ref{thm1}, and the result follows (notice that in this case \eqref{thm>} implies \eqref{thm<}).
			
			Finally, if $\nu\leq -1$, since $f$ vanishes at infinity, condition \eqref{thm<} is automatically satisfied, and the result follows, since \eqref{costhmbig} is precisely \eqref{thm>}.
		\end{proof}
		
		\begin{proof}[Proof of Corollary~\ref{CORcosgm}]			
			Similarly as above, the necessity follows from the convergence at $r=0$ and $j_\alpha(0)=1$.
			
			In order to prove the sufficiency part, we need the following result whose proof is rather technical and will be shown elsewhere for the sake of brevity:
			\begin{lemma}
				\label{THM-convergence-GM}
				Let $g\in GM$ be real-valued and assume that $\int_0^\infty g(t)\, dt$ converges. Then $tg(t)\to 0$ as $t\to \infty$.
			\end{lemma}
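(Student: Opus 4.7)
My plan is to reduce the pointwise claim to an integrability claim. The $GM$ estimate \eqref{gmest} gives
$$t|g(t)| \lesssim \int_{t/\lambda}^{\lambda t}|g(s)|\,ds,$$
so $tg(t)\to 0$ follows once one knows $g$ is integrable near infinity, by absolute continuity of the integral. Hence the task becomes: if $g\in GM$ is real-valued and $\int_0^\infty g$ converges, then $\int_1^\infty|g(t)|\,dt < \infty$.

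The central tool is an elementary bound for any real-valued $g$ of bounded variation on $[a,b]$: if $g$ has constant sign on $[a,b]$ then $\int_a^b|g| = |\int_a^b g|$; otherwise $g$ attains both a positive value $g(t_+)$ and a negative value $g(t_-)$, so $g(t_+) + |g(t_-)| \leq |g(t_+)-g(t_-)| \leq V_a^b(g)$, which forces $\sup_{[a,b]}|g|\leq V_a^b(g)$ and hence $\int_a^b|g| \leq (b-a)V_a^b(g)$. Combining the two cases,
$$\int_a^b|g(t)|\,dt \leq (b-a)\,V_a^b(g) + \bigg|\int_a^b g(t)\,dt\bigg|.$$

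I would apply this inequality on each piece of a uniform partition of a dyadic interval $[2^k,2^{k+1}]$ into $N$ sub-intervals of length $\ell \asymp 2^k/N$. Summing, using the $GM$ property to bound the total variation by $\lesssim 2^{-k}\int_{2^k/\lambda}^{\lambda 2^{k+1}}|g|$, and the Cauchy criterion for the convergence of $\int_0^\infty g$ to bound each signed sub-integral by $2\varepsilon_k$, where $\varepsilon_k := \sup_{y,z\geq 2^k}|\int_y^z g| \to 0$, and then optimizing over $N$, one obtains
$$\int_{2^k}^{2^{k+1}}|g(t)|\,dt \lesssim \big(\psi_k\,\varepsilon_k\big)^{1/2},$$
where $\psi_k$ denotes an integral of $|g|$ over a slightly enlarged dyadic window around $[2^k,2^{k+1}]$.

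The main obstacle is closing this argument, since $\psi_k$ involves $|g|$ on a larger interval than the left-hand side, so the estimate is not immediately self-improving. I expect the published proof to handle this by a bootstrap — either establishing an a priori tail bound on $\psi_k$, or substituting the estimate into itself across dyadic scales to obtain a self-improving inequality of the form $\sum_{k\geq k_0}\int_{2^k}^{2^{k+1}}|g|\lesssim\sum_{k\geq k_0}\varepsilon_k^c$ for some $c>0$, from which $g \in L^1$ near infinity and hence $tg(t)\to 0$ follow. This bootstrap, together with the careful tracking of the constants arising from $GM$, is the technical heart of the proof and is presumably why the author defers the full details.
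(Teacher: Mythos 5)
The paper itself contains no proof of Lemma~\ref{THM-convergence-GM}: the author explicitly defers it to a separate publication, so there is no in-paper argument to compare yours with. Judged on its own terms, your proposal has a fatal flaw: the statement you reduce to, namely that $g\in GM$ together with the convergence of $\int_0^\infty g(t)\,dt$ forces $\int_1^\infty |g(t)|\,dt<\infty$, is false. Take $g(t)=(-1)^k\,2^{-k}/k$ for $t\in[2^k,2^{k+1})$, $k\geq 1$, extended by the constant value $-1/2$ on $(0,2)$. Its only jumps occur at the points $2^{k+1}$ and have size at most $2\cdot 2^{-k}/k$, while for $x\in[2^k,2^{k+1})$ one has $\frac{1}{x}\int_{x/2}^{2x}|g(t)|\,dt\geq \frac{1}{2^{k+1}}\int_{2^k}^{2^{k+1}}|g(t)|\,dt = 2^{-k-1}/k$, so $g\in GM$ with $\lambda=2$ and a moderate constant. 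The block integrals are $(-1)^k/k$, so $\int_0^\infty g$ converges (alternating series, with partial-block contributions $O(1/k)$), and $t|g(t)|\asymp 1/k\to 0$, consistent with the lemma; yet $\int_2^\infty|g(t)|\,dt=\sum_k 1/k=\infty$. Consequently no bootstrap of your inequality $\int_{2^k}^{2^{k+1}}|g|\lesssim(\psi_k\,\varepsilon_k)^{1/2}$ can ever produce integrability: in this example $\psi_k\asymp\varepsilon_k\asymp 1/k$, so your estimate is sharp and non-summable. You also concede that the closing step is missing, so even as a conditional argument the proposal stops exactly at the crucial point.

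What your computation could legitimately aim for is much weaker: that the windowed quantities $\int_{x/\lambda}^{\lambda x}|g(t)|\,dt$ merely tend to zero as $x\to\infty$ (not that they are summable over dyadic blocks), which by \eqref{gmest} is all that is needed to conclude $tg(t)\to 0$. Your partition estimate, rewritten as $A_k\lesssim\big(\max_{|j-k|\leq c}A_j\cdot\varepsilon_k\big)^{1/2}$ with $A_k=\int_{2^k}^{2^{k+1}}|g|$ and $c$ fixed by $\lambda$, would indeed give $A_k\to 0$ if one knew a priori that the $A_k$ are bounded; but the inequality alone does not exclude geometric growth of $A_k$ (it is satisfied, for instance, by $A_k=\theta^k$ once $\theta^c$ exceeds the implied constant), and you give no argument for such an a priori bound. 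So the genuine gap is twofold: the intermediate target ($g\in L^1$ near infinity) is unattainable, and the weaker target that would suffice still requires an a priori control of the windowed integrals of $|g|$ — obtained from the signed hypothesis $\sup_{y,z\geq M}\big|\int_y^z g\big|\to 0$ without ever passing to absolute integrals over unbounded ranges — which your proposal does not provide.
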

			The latter is a generalization of the well known Abel-Olivier's test that deals with nonnegative monotone functions (see also \cite{LTZ}, where the monotonicity assumption is relaxed). We emphasize that Theorem~\ref{THM-convergence-GM} $g$ only needs to be real-valued, instead of nonnegative. 
			
			Since $f\in GM$, it follows that $t^\nu f(t)\in GM$ for every $\nu \in \R$. Therefore, by Lemma~\ref{THM-convergence-GM} (with $g(t)=t^\nu f(t)$), the convergence of $\int_0^\infty t^\nu f(t)\, dt$ implies that $t^{\nu+1}f(t)\to 0$ as $t\to \infty$, which is precisely condition \eqref{thm<}.
			
			To conclude the proof, we show that if $f\in GM$, then \eqref{thm<} implies \eqref{thm>}, and the result will follow by Theorem~\ref{thm1}. Indeed, since $\alpha\geq -1/2$,
			\begin{align*}
			\int_M^\infty t^{\nu-\alpha-1/2}|df(t)|&\lesssim \int_{M/2}^\infty \frac{1}{t}\int_{t}^{2t}s^{\nu-\alpha-1/2}|df(t)|\lesssim \int_{M/2}^\infty t^{\nu-\alpha-5/2}\int_{t/\lambda}^{\lambda t}|f(s)|\, ds\\
			&\lesssim \int_{M/2}^\infty \bigg(\max_{t/\lambda\leq x\leq \lambda t} |f(x)|\bigg)t^{\nu-\alpha-3/2}\, dt\\
			 &\asymp \int_{M/2}^\infty \bigg(\max_{t/\lambda\leq x\leq \lambda t} x^{\nu+1}|f(x)|\bigg) t^{-\alpha-5/2}\, dt\\
			 &\lesssim \bigg(\max_{x\geq M/(2\lambda)} x^{\nu+1}|f(x)|\bigg)M^{-\alpha-3/2}.
			\end{align*}
			Thus, by \eqref{thm<},
			$$
			M^{\alpha+3/2}\int_M^\infty t^{\nu-\alpha-1/2}|df(t)|\lesssim \max_{x\geq M/(2\lambda)} x^{\nu+1}|f(x)| \to 0 \qquad \text{as }M\to \infty,
			$$
			i.e., \eqref{thm>} holds. This completes the proof.
		\end{proof}

		\begin{proof}[Proof of Theorem~\ref{thm1.5}]
			The necessity part is clear, due to the convergence at $r=0$. 
			
			Now we proceed to prove the sufficiency part. Let us denote
			$$
			F_\nu(x):=\int_x^\infty t^{\nu}f(t)\, dt.
			$$ 
			First of all, it follows from \eqref{est} and \eqref{dj2} that
			\begin{equation}
			\label{derivativeest}
			\bigg|\frac{d}{dt}j_\alpha(rt)\bigg| \lesssim \frac{1}{t^{\alpha+1/2}r^{\alpha-1/2}},
			\end{equation}
			whenever $rt\geq 1$, or equivalently, $r\geq 1/t$. Now we proceed to estimate the integral
			$$
			\int_M^\infty t^{\nu}f(t)j_\alpha(rt)\, dt,
			$$
			which is equivalent to estimate the Cauchy remainder \eqref{remainder} as $N\to \infty$.
			On the one hand, if $r\geq 1/M$, we integrate by parts and obtain
			\begin{align*}
			\bigg|	\int_M^\infty t^{\nu}f(t)j_\alpha(rt)\, dt \bigg| &\leq  \big| j_\alpha(rM)F_\nu(M)\big|+\bigg|\int_M^\infty F_\nu(t)\bigg(\frac{d}{dt}j_\alpha(rt)\bigg)\, dt\bigg|\\
			& \leq \max_{N\geq M}|F_\nu(N)|+ \max_{N\geq M}|F_\nu(N)|\int_M^\infty\bigg| \frac{d}{dt}j_\alpha(rt)\bigg|\, dt\\
			&\lesssim \max_{N\geq M}|F_\nu(N)|\bigg(1+\frac{1}{r^{\alpha-1/2}}\int_M^\infty \frac{1}{t^{\alpha+1/2}} \, dt\bigg)\\
			&\leq	\max_{N\geq M}|F_\nu(N)|\bigg(1+M^{\alpha-1/2}\int_M^\infty \frac{1}{t^{\alpha+1/2}} \, dt \bigg) \\
			&\asymp \max_{N\geq M}|F_\nu(N)|,
			\end{align*}
			where we have applied \eqref{derivativeest} and used the fact that $\alpha>1/2$. Since $F_\nu$ vanishes at infinity whenever \eqref{necsuf} is satisfied, the above estimate vanishes as $M\to \infty$. On the other hand, if $r<1/M$, we write
			$$
			\int_M^\infty t^{\nu}f(t)j_\alpha(rt)\, dt=\bigg(\int_M^{1/r}+\int_{1/r}^\infty \bigg) t^{\nu}f(t)j_\alpha(rt)\, dt,
			$$
			and estimate $\int_{1/r}^\infty 		t^{\nu}f(t)j_\alpha(rt)\, dt$ as in the previous case. Similarly as in the proof of Theorem~\ref{thm1}, estimate \eqref{smallest} yields
			\begin{align*}
			\bigg\vert \int_M^{1/r} t^{\nu} f(t) j_{\alpha}(rt) \, dt\bigg\vert &\leq \max_{x>M} \bigg|\int_M^x t^\nu f(t)\, dt\bigg| + r \int_M^{1/r} t^{\nu+1}|f(t)|\, dt \\
			& \leq \max_{x>M} \bigg|\int_M^x t^\nu f(t)\, dt\bigg| +\max_{x\geq M} x^{\nu+1}|f(x)|,
			\end{align*}
			which vanishes as $M\to \infty$.
		\end{proof}
	
	\section{Uniform convergence of $\mathcal{L}^\alpha_{\nu,\mu}f$ with $0<\mu+\nu \leq \alpha+3/2$}\label{sineu.c.}
	
	In this section we study the uniform convergence of sine-type transforms. We also mention some remarkable facts about the family of operators $\mathcal{L}^\alpha_{\alpha+1/2,0}$, $\alpha>-1/2$.

	\subsection{Main Results}
	
	Additionally to Theorem~\ref{sinethm}, here we give several results involving $GM$ functions, and in some cases we can obtain a criterion for the uniform convergence of $\mathcal{L}_{\nu,\mu}^\alpha f$. The extremal case $\mu+\nu=0$ is not mentioned here, since it is already treated in Section~\ref{secu.c.} (see Theorems~\ref{thm1}~and~\ref{costhmvanish}).
	
	\begin{remark}
		Let us observe an interesting property of the operator $\mathcal{L}^\alpha_{\alpha+1/2,0}$, with $\alpha>-1/2$ (if $\alpha=-1/2$, such operator is the cosine transform). Its kernel $K_\alpha(r,t) = K_\alpha(rt) := (rt)^{\alpha+1/2} j_\alpha(rt)$ is uniformly bounded and does not vanish at infinity in any of the variables $r$ nor $t$ (for any fixed $\alpha$, this is the only kernel of the type \eqref{generalkernel} with this property). Moreover, $K_\alpha$ vanishes at the origin. Thus, such kernel has a similar behaviour as the kernel $K_{1/2}(rt) =\sin rt$ corresponding to $\hat{f}_{\sin}$. In fact, more than extending the sine transform, the sufficient condition that guarantees the uniform convergence of $\mathcal{L}^{\alpha}_{\alpha+1/2,0}f$ and $\hat{f}_{\sin}$ is the same, namely (cf. Theorem~\ref{sinethm})
	\begin{equation*}
	\int_x^\infty |df(t)|=o(1/x) \quad \text{as }x\to \infty.
	\end{equation*}
	\end{remark}
	
	Similarly as for cosine-type transforms, in Theorem~\ref{sinethm} we do not assume that $f$ vanishes at infinity; for functions satisfying the latter we claim the following:
		\begin{theorem}
			\label{sinethmvanish}
			Let $\nu,\mu\in \R$ be such that $0<\mu+\nu\leq \alpha+3/2$, and let $f$ be vanishing at infinity and such that $t^\nu f(t)\in L^1(0,1)$. Assume that
			\begin{align}
			\int_M^\infty |df(t)| &= o\big( M^{\mu-1}\big) \quad \text{as }M\to \infty, & \text{if }\nu<\alpha+1/2 \text{ and }\mu< 1, \label{sinthmsmall}\\
			\int_M^\infty  t^{\nu-\alpha-1/2}|df(t)| &=o\big(M^{\mu+\nu-\alpha-3/2}\big) \quad \text{as }M\to \infty, &\text{if }\nu\geq \alpha+1/2 \text{ or }\mu\geq 1.\label{sinthmbig}
			\end{align}
			Then $\mathcal{L}_{\nu,\mu}^\alpha f$ converges uniformly on $\R_+$.
		\end{theorem}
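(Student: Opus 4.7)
The plan is to derive Theorem~\ref{sinethmvanish} directly from Theorem~\ref{sinethm} by verifying that, under the vanishing of $f$ at infinity, the two-part hypothesis \eqref{sinthmsmall}--\eqref{sinthmbig} forces the two-part hypothesis \eqref{cond1}--\eqref{cond2}. The workhorse is the elementary bound
$$|f(M)|\leq \int_M^\infty |df(t)|,$$
valid for every $M>0$ whenever $f$ vanishes at infinity.

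First I would handle the case $\nu<\alpha+1/2$ and $\mu<1$, where only \eqref{sinthmsmall} is assumed. Here \eqref{cond1} is immediate from the bound above. For \eqref{cond2}, the point is that $\nu-\alpha-1/2<0$, so $t^{\nu-\alpha-1/2}\leq M^{\nu-\alpha-1/2}$ on $[M,\infty)$, and therefore
$$\int_M^\infty t^{\nu-\alpha-1/2}\,|df(t)|\leq M^{\nu-\alpha-1/2}\int_M^\infty |df(t)|=o\bigl(M^{\mu+\nu-\alpha-3/2}\bigr).$$

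Next I would treat the complementary case $\nu\geq \alpha+1/2$ or $\mu\geq 1$. Here the standing hypothesis \eqref{sinthmbig} is precisely \eqref{cond2}, so only \eqref{cond1} needs attention. If $\nu\geq \alpha+1/2$, I would reverse the inequality from the previous step: $t^{\nu-\alpha-1/2}\geq M^{\nu-\alpha-1/2}$ for $t\geq M$, hence
$$M^{\nu-\alpha-1/2}|f(M)|\leq \int_M^\infty t^{\nu-\alpha-1/2}\,|df(t)|=o\bigl(M^{\mu+\nu-\alpha-3/2}\bigr),$$
which is \eqref{cond1} after dividing by $M^{\nu-\alpha-1/2}$. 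If instead $\mu\geq 1$, then $M^{\mu-1}\geq 1$ for $M\geq 1$, so the vanishing of $f$ at infinity alone gives $|f(M)|=o(1)=o(M^{\mu-1})$.

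With both hypotheses of Theorem~\ref{sinethm} thereby secured in each case, the conclusion on uniform convergence follows. The computations are essentially two one-line inequalities in opposite directions, so the only real obstacle is bookkeeping: making sure the case split is exhaustive (it is, since the two cases are the negation of one another) and that the correct sign of $\nu-\alpha-1/2$ is used in each branch.
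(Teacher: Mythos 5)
Your proposal is correct and follows essentially the same route as the paper: in each case you reduce to Theorem~\ref{sinethm} via the bound $|f(M)|\leq \int_M^\infty |df(t)|$ together with the monotone comparison of $t^{\nu-\alpha-1/2}$ with $M^{\nu-\alpha-1/2}$ (in the appropriate direction), and the trivial observation $M^{\mu-1}\geq 1$ when $\mu\geq 1$. These are exactly the inequalities used in the paper's proof, so there is nothing to add.
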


	We can refine Theorem~\ref{sinethm} by assuming that $f\in GM$. Furthermore, in this case we can obtain a criterion for non-negative $GM$ functions.
	
		\begin{theorem}\label{sinegm}
			Let $\nu,\mu$ be such that $0<\mu+\nu <\alpha+3/2$. Let $f$ be a $GM$ function such that $t^\nu f(t)\in L^1(0,1)$.
			\begin{enumerate}
				\item If 
				\begin{equation}
				\label{cond1-aux}
				|f(M)|= o\big(M^{\mu-1}\big)\quad \text{as }M\to \infty,
				\end{equation}
				then $\mathcal{L}_{\nu,\mu}^\alpha f$ converges uniformly.
				\item If $f\geq 0$ and $\mathcal{L}_{\nu,\mu}^\alpha f$ converges uniformly, then \eqref{cond1-aux} holds.
			\end{enumerate}
		\end{theorem}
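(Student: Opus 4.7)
The plan is to reduce part (1) to Theorem~\ref{sinethm} and to prove part (2) by testing the Cauchy remainder at a carefully chosen $r$ on an interval where $j_\alpha(rt)$ stays positive and where the $GM$-lower bound can be applied pointwise.

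For (1), note that \eqref{cond1-aux} is literally \eqref{cond1}, so it remains to verify \eqref{cond2}. Given $\varepsilon>0$, I choose $M_0$ with $|f(s)|\leq \varepsilon s^{\mu-1}$ for $s\geq M_0$. Then I would replicate the chain of $GM$-manipulations in \eqref{gmvariation} (used already in the proof of Corollary~\ref{CORcosgm}) to get
\[
\int_M^\infty t^{\nu-\alpha-1/2}\,|df(t)|\,\lesssim\, \int_{M/2}^\infty t^{\nu-\alpha-5/2}\int_{t/\lambda}^{\lambda t}|f(s)|\,ds\,dt
\]
for $M$ large. Bounding the inner integral by $\varepsilon t^\mu$ (up to constants depending on $\mu$ and $\lambda$), the remaining integral $\int_{M/2}^\infty t^{\mu+\nu-\alpha-5/2}\,dt\,\asymp\, M^{\mu+\nu-\alpha-3/2}$ converges \emph{precisely} because of the strict inequality $\mu+\nu<\alpha+3/2$. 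Collecting constants yields the majorant $\varepsilon M^{\mu+\nu-\alpha-3/2}$, and the arbitrariness of $\varepsilon$ produces \eqref{cond2}. An appeal to Theorem~\ref{sinethm} completes (1).

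For (2), I would exploit uniform convergence as follows. Given $\varepsilon>0$, let $M_0$ be such that
\[
\biggl|r^\mu \int_M^N (rt)^\nu f(t) j_\alpha(rt)\,dt\biggr|<\varepsilon
\]
for all $r\geq 0$ and $N>M\geq M_0$. I would test this inequality at $N=\lambda^2 M$ and $r=1/(\lambda^2 M)$, so that $rt\in[1/\lambda^2,1]$ on the test interval; by \eqref{smallest} this forces $j_\alpha(rt)\geq 1-C$ with $C<1$. Since $f\geq 0$, the integrand is nonnegative, and using $t\asymp M$ on $[M,\lambda^2 M]$ together with the value of $r$ gives
\[
\varepsilon\,\gtrsim\, r^{\mu+\nu}\int_M^{\lambda^2 M}t^\nu f(t)\,dt\,\gtrsim\, M^{-\mu}\int_M^{\lambda^2 M} f(t)\,dt.
\]
Finally I would apply the $GM$-lower estimate $x f(x)\lesssim \int_{x/\lambda}^{\lambda x} f(t)\,dt$ (a rearrangement of \eqref{gmest}) at the point $x=\lambda M$ to obtain $\lambda M\, f(\lambda M)\lesssim \int_M^{\lambda^2 M} f(t)\,dt$, hence $(\lambda M)^{1-\mu} f(\lambda M)\lesssim \varepsilon$. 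Since $\varepsilon$ was arbitrary, setting $M'=\lambda M$ yields \eqref{cond1-aux}.

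The main subtlety will be the calibration in (2): $r$ must be small enough (relative to $1/M$) to keep $j_\alpha(rt)$ bounded away from $0$ on an interval of length comparable to $\lambda^2 M$, which is exactly the scale on which the $GM$-lower bound converts an integral bound into a pointwise bound on $f$. The nonnegativity of $f$ is essential in (2), since it is what turns the oscillatory integral into a genuine lower bound on $\int f$.
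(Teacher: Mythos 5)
Your proposal is correct and follows essentially the same route as the paper: part (1) reduces to Theorem~\ref{sinethm} by the same $GM$ chain showing that \eqref{cond1-aux} forces \eqref{cond2} precisely because $\mu+\nu<\alpha+3/2$, and part (2) tests the Cauchy remainder on a $\lambda$-scaled interval and converts the resulting integral bound into a pointwise bound on $f$ via \eqref{gmest}. The only (harmless) deviation is your calibration $r=1/(\lambda^2 M)$ in part (2), which keeps $rt\le 1$ so that \eqref{smallest} guarantees $j_\alpha(rt)\ge 1-C>0$ on the test interval, a slightly more explicit justification of the kernel lower bound than the paper's choice of the window $[1/(\lambda r),\lambda/r]$.
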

		The ``if and only if'' statement reads as follows:
		\begin{corollary}\label{corgmsine}
			Let $f\in GM$ be non-negative, $\alpha\geq -1/2$, and $\nu,\mu$ be such that $0<\mu+\nu<\alpha+3/2$. Then, $\mathcal{L}_{\nu,\mu}^\alpha f$ converges uniformly if and only if \eqref{cond1-aux} holds.
		\end{corollary}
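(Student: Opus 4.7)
The plan is to observe that Corollary~\ref{corgmsine} is an immediate consequence of Theorem~\ref{sinegm}, obtained by merging its two items in the case of a non-negative $GM$ function. No new estimate is needed beyond what is established in that theorem.

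In more detail, under the hypotheses $f\in GM$ non-negative and $0<\mu+\nu<\alpha+3/2$, if \eqref{cond1-aux} holds I would invoke item~(1) of Theorem~\ref{sinegm}---which assumes only $f\in GM$ (together with the standing local integrability $t^\nu f(t)\in L^1(0,1)$) and places no sign restriction---to conclude that $\mathcal{L}^\alpha_{\nu,\mu}f$ converges uniformly. Conversely, if $\mathcal{L}^\alpha_{\nu,\mu}f$ converges uniformly then, using the hypothesis $f\geq 0$, item~(2) of Theorem~\ref{sinegm} applies verbatim and delivers \eqref{cond1-aux}. Thus the two implications of the corollary correspond precisely to the two items of the theorem, and no further argument is required.

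The only point worth checking is that the local integrability $t^\nu f(t)\in L^1(0,1)$, which is built into the hypotheses of Theorem~\ref{sinegm}, is also available in the setting of the corollary. This is essentially automatic: either it is understood as an implicit standing assumption (without it the partial integrals defining $\mathcal{L}^\alpha_{\nu,\mu}f(r)$ would not make sense for $r>0$), or it follows for free from \eqref{smallz}, which gives $j_\alpha(rt)\asymp 1$ for $rt$ small; hence uniform convergence of $\mathcal{L}^\alpha_{\nu,\mu}f$ at even a single $r>0$ already forces $t^\nu f(t)\in L^1(0,1)$ since $f\geq 0$. Consequently, there is no substantive obstacle here: the hard work has already been done in Theorem~\ref{sinegm}, and the corollary is a one-line assembly of its two parts.
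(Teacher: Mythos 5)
Your proposal is correct and matches the paper's intent exactly: Corollary~\ref{corgmsine} is stated there as the ``if and only if'' reformulation of Theorem~\ref{sinegm}, with the sufficiency given by item~(1) and the necessity (using $f\geq 0$) by item~(2). Your remark that $t^\nu f(t)\in L^1(0,1)$ is implicitly available (or forced by convergence, via \eqref{smallz} and non-negativity) is a reasonable reading of the standing assumptions and does not change the argument.
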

		
		The $GM$ condition in the sufficiency part of Theorem~\ref{sinegm} (and therefore also in Corollary~\ref{corgmsine}) is sharp, as shown by our next statement.
		\begin{proposition}
			\label{PROPsharpgm}
			Let $0<\mu+\nu<\alpha+3/2$. There exists $f\not\in GM$ such that condition \eqref{cond1-aux} does not hold, but $\mathcal{L}^\alpha_{\nu,\mu}f$ converges uniformly.
		\end{proposition}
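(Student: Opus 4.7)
The plan is to construct an explicit counterexample: a spiky function made of a sequence of narrow, tall bumps on intervals $[x_k,x_k+\delta_k]$ with $x_k\to\infty$, tuned so that the heights violate \eqref{cond1-aux} and make $f\notin GM$, while the widths are small enough that the associated mass decays geometrically. Concretely, with $x_k=2^k$ and $\delta_k=2^{-k(\mu+\nu)}$ (both well-defined since $\mu+\nu>0$), I set
$$
f(t)=\sum_{k=1}^{\infty} 2^{k(\mu-1)}\,\chi_{[x_k,\,x_k+\delta_k]}(t).
$$
The intervals $[x_k,x_k+\delta_k]$ are disjoint, and $f$ vanishes on $(0,2)$, so $t^\nu f(t)\in L^1(0,1)$ trivially.

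Next I would verify the two negative requirements. Evaluating at $M=x_k$ gives $|f(x_k)|=x_k^{\mu-1}$, so $|f(M)|/M^{\mu-1}$ does not tend to $0$ along the sequence $M=x_k$, and \eqref{cond1-aux} fails. For the $GM$ violation, at $x=x_k$ the jumps of the single spike force $\int_x^{2x}|df(t)|\geq 2\cdot 2^{k(\mu-1)}$, while for any fixed $\lambda>1$
$$
\frac{1}{x}\int_{x/\lambda}^{\lambda x}|f(t)|\,dt\lesssim \frac{1}{2^k}\cdot 2^{k(\mu-1)}\cdot\delta_k\asymp 2^{-k(\nu+2)},
$$
so the ratio is of order $2^{k(\mu+\nu+1)}\to\infty$, ruling out any constant $C$ in Definition~\ref{defgm}.

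The heart of the matter is the uniform convergence of the Cauchy remainder, which I would establish by splitting the range of $r$. For $r\leq 1$, the crude bound $|j_\alpha(rt)|\leq 1$ suffices and yields
$$
r^{\mu+\nu}\bigg|\int_M^N t^\nu f(t)j_\alpha(rt)\,dt\bigg|\lesssim \sum_{2^k\geq M}2^{k(\mu-1)}\,\delta_k\,2^{k\nu}=\sum_{2^k\geq M}2^{-k}\lesssim \frac{1}{M}.
$$
For $r\geq 1$, Lemma~\ref{mainlemma} applied to the primitive $g^\nu_{\alpha,r}$ yields $|\int_{x_k}^{x_k+\delta_k}t^\nu j_\alpha(rt)\,dt|\lesssim x_k^{\nu-\alpha-1/2}/r^{\alpha+3/2}$, whence
$$
r^{\mu+\nu}\bigg|\int_M^N t^\nu f(t)j_\alpha(rt)\,dt\bigg|\lesssim r^{\mu+\nu-\alpha-3/2}\sum_{2^k\geq M}2^{k(\mu+\nu-\alpha-3/2)}\lesssim M^{\mu+\nu-\alpha-3/2},
$$
where the last two steps rely crucially on $\mu+\nu-\alpha-3/2<0$: this makes the geometric sum convergent and ensures $r^{\mu+\nu-\alpha-3/2}\leq 1$ for $r\geq 1$. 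Both bounds vanish as $M\to\infty$ independently of $r$, yielding uniform convergence on $\R_+$.

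The main obstacle is recognising that no single estimate of $\int_{I_k}t^\nu j_\alpha(rt)\,dt$ works across all $r$: the trivial bound $\delta_k x_k^\nu$ is sharp for slow oscillation but wastes the decay of $j_\alpha$ at infinity, while the Lemma~\ref{mainlemma} bound is sharp for large $r$ but blows up near $r=0$. Using each bound in its appropriate $r$-regime, and calibrating the heights $a_k=x_k^{\mu-1}$ together with widths $\delta_k=x_k^{-(\mu+\nu)}$ so that both resulting geometric sums converge, is what makes the construction succeed precisely in the regime $0<\mu+\nu<\alpha+3/2$.
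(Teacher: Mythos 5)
Your construction and verification are correct, and they follow the same counterexample idea as the paper — spikes of height $t^{\mu-1}$ located at $t=2^k$, so that \eqref{cond1-aux} fails along the spikes and the $GM$ inequality blows up at $x=2^k$ — but the route to uniform convergence is genuinely different. The paper splits the range of parameters into $0<\mu+\nu\leq\alpha+1/2$ and $\alpha+1/2<\mu+\nu<\alpha+3/2$, chooses different spike widths in each case ($\varepsilon_n=2^{-n\beta}$ with $\beta>\mu+\nu-1$, respectively $\varepsilon_n=1$), and then simply invokes the already-proved sufficient conditions: Corollary~\ref{PROPuclacunary} (via $t^\nu f\in L^1(\R_+)$) in the first case and Corollary~\ref{roughcorol} (via $t^{\nu-\alpha-1/2}f\in L^1(1,\infty)$ plus \eqref{pointwisecond}) in the second. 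You instead fix one width $\delta_k=2^{-k(\mu+\nu)}$ valid across the whole strip $0<\mu+\nu<\alpha+3/2$ and estimate the Cauchy remainder from scratch, using $|j_\alpha|\leq1$ and $r^{\mu+\nu}\leq1$ for $r\leq1$, and Lemma~\ref{mainlemma} together with $r^{\mu+\nu-\alpha-3/2}\leq1$ for $r\geq1$; both hypotheses $\mu+\nu>0$ and $\mu+\nu<\alpha+3/2$ enter exactly where they should. Your version buys a unified construction with no case distinction and is self-contained, at the cost of redoing estimates the paper has already packaged (indeed your $f$ satisfies $t^\nu f\in L^1(\R_+)$, $t^{\nu-\alpha-1/2}f\in L^1(1,\infty)$ and $\int_1^\infty t^{\nu-\alpha-1/2}|df|<\infty$, so you could also have concluded directly from Corollaries~\ref{PROPuclacunary} and \ref{roughcorol}). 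One cosmetic point: whether the upward jump at $t=x_k$ is counted in $\int_{x_k}^{2x_k}|df|$ is a matter of convention, but the interior downward jump at $x_k+\delta_k$ already gives $\int_{x_k}^{2x_k}|df|\geq 2^{k(\mu-1)}$, which suffices for your $GM$ argument.
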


		Note that in Theorem~\ref{sinegm} we exclude the case $\mu+\nu=\alpha+3/2$. Actually, the proof of the latter relies on the fact that if $f\in GM$ and $0<\mu+\nu<\alpha+3/2$, then \eqref{cond1-aux} implies \eqref{sinthmbig}, and the result follows by Theorem~\ref{sinethm}. This is not the case in the extremal case $\mu+\nu=\alpha+3/2$.

		\begin{proposition}\label{sharpparams2}
			Let $\nu\in \R$ and $\mu<1$ be such that $\mu+\nu=\alpha+3/2$. If $f\in GM$ vanishes at infinity, then \eqref{sinthmbig} is equivalent to $t^{-\mu}f(t)\in L^1(1,\infty)$.
		\end{proposition}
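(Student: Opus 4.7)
The plan is to exploit the parameter identity $\mu+\nu=\alpha+3/2$ to simplify \eqref{sinthmbig}, then handle one direction with the $GM$ property and the other with a Fubini interchange in which $\mu<1$ plays the decisive role.

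First I would observe that the identity $\mu+\nu=\alpha+3/2$ forces $\nu-\alpha-1/2=1-\mu$ and $\mu+\nu-\alpha-3/2=0$, so that \eqref{sinthmbig} reduces to
$$
\int_M^\infty t^{1-\mu}|df(t)|=o(1)\quad\text{as }M\to\infty,
$$
which, since $f$ is locally of bounded variation on every $[1,M]$, is equivalent to the convergence of $\int_1^\infty t^{1-\mu}|df(t)|$.

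For the implication $t^{-\mu}f(t)\in L^1(1,\infty)\Rightarrow$ \eqref{sinthmbig}, I would carry out the same dyadic computation that underlies \eqref{gmvariation}: on each piece $[2^kM,2^{k+1}M]$, bound the factor $t^{1-\mu}$ by $(2^kM)^{1-\mu}$, apply the $GM$ inequality to $\int_{2^kM}^{2^{k+1}M}|df|$, and sum over $k\geq 0$. This yields
$$
\int_M^\infty t^{1-\mu}|df(t)|\lesssim \int_{M/(2\lambda)}^\infty s^{-\mu}|f(s)|\, ds,
$$
whose right-hand side vanishes as $M\to\infty$ by the assumed integrability.

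For the converse, the $GM$ hypothesis is not used. Since $f$ vanishes at infinity, $|f(t)|\leq \int_t^\infty |df(s)|$ for every $t\geq 1$, so Fubini gives
$$
\int_1^\infty t^{-\mu}|f(t)|\, dt\leq \int_1^\infty |df(s)|\int_1^s t^{-\mu}\, dt.
$$
The hypothesis $\mu<1$ enters here decisively: the inner integral is controlled by $s^{1-\mu}/(1-\mu)$, so the right-hand side is dominated by a constant multiple of $\int_1^\infty s^{1-\mu}|df(s)|$, finite by (the assumed version of) \eqref{sinthmbig}. The main subtlety lies in the forward direction, which is essentially a replay of the estimate behind \eqref{gmvariation} with the new exponent; the backward direction is almost formal once one notices that $\mu<1$ is exactly what makes $\int_1^s t^{-\mu}\, dt$ grow no faster than $s^{1-\mu}$, so that the Fubini estimate closes using only the vanishing of $f$ at infinity.
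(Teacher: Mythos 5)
Your proof is correct and takes essentially the same route as the paper: the reduction of \eqref{sinthmbig} to the convergence of $\int_1^\infty t^{1-\mu}\,|df(t)|$, the $GM$ dyadic estimate for the implication from $t^{-\mu}f(t)\in L^1(1,\infty)$ (your version, which starts the dyadic blocks at $M$ and bounds the tail directly, is only a cosmetic variant of the paper's computation), and the Fubini argument based on $|f(t)|\leq\int_t^\infty|df(s)|$ with $\mu<1$ for the converse, which indeed uses no $GM$ assumption. Nothing further is needed.
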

		It is clear that if $\mu+\nu=\alpha+3/2$, then \eqref{cond1-aux} does not imply \eqref{sinthmbig}, since the latter does not imply $t^{-\mu}f(t)\in L^1(1,\infty)$ even for decreasing functions. Moreover, Proposition~\ref{sharpparams2} does not hold for $\mu=1$. Indeed, for $f$ decreasing we have
		$$
		\int_M^\infty t^{1-\mu}|df(t)|=\int_M^\infty |df(t)|=f(M),
		$$
		and the convergence of $\int_1^\infty |df(t)|$ is equivalent to $f(M)\to 0$ as $M\to \infty$. As mentioned above, such condition does not imply that $M^{-1}f(M)$ is integrable.

\subsection{Proofs}

	\begin{proof}[Proof of Theorem~\ref{sinethm}]
		Again, we prove that the Cauchy remainder \eqref{remainder} vanishes uniformly in $r$ as $N>M\to \infty$. 		Let $0<M<N$, and assume that $1/r\leq M$. Integration by parts together with the representation of \eqref{primformula}, and  estimate \eqref{Primest}  yield
		\begin{align*}
		r^{\mu+\nu}\int_M^N t^{\nu} f(t) j_{\alpha}(rt)\, dt& =r^{\mu+\nu} \bigg( \big[f(t) g^{\nu}_{\alpha,r}(t) \big]_M^N -\int_M^N g^{\nu}_{\alpha,r}(t)\, df(t) \bigg)  \\
		& \lesssim r^{\mu+\nu} \bigg( \frac{N^{\nu-\alpha-1/2}}{r^{\alpha+3/2}}|f(N)|+ \frac{M^{\nu-\alpha-1/2}}{r^{\alpha+3/2}}|f(M)|  \\
		&\phantom{=} + \frac{1}{r^{\alpha+3/2}} \int_M^{N}t^{\nu-\alpha-1/2}\, |df(t)|\bigg) \\
		&\lesssim \max_{x\geq M}x^{1-\mu}|f(x)| + M^{\alpha+3/2-\mu-\nu}\int_M^\infty t^{\nu-\alpha-1/2}\,|df(t)|,
		\end{align*}
		which vanishes as $M\to \infty$, by \eqref{cond1-aux} and \eqref{sinthmbig}.
		
			If $1/r>M$, we write
			$$
			r^{\mu+\nu}\int_M^N t^{\nu} f(t) j_{\alpha}(rt)\, dt = r^{\mu+\nu} \bigg(\int_M^{1/r}+ \int_{1/r}^N\bigg) t^\nu f(t) j_\alpha(rt)\, dt,
			$$
			and estimate the integral $\int_{1/r}^N t^\nu f(t)j_\alpha(rt)\, dt$ as above. Furthermore, since $\mu+\nu >0$, it follows that
			\begin{align*}
			\bigg| r^{\mu}\int_M^{1/r} (rt)^\nu f(t) j_\alpha(rt)\, dt\bigg|&\leq  r^{\mu} \int_{M}^{1/r} (rt)^\nu |f(t)|\, dt = r^{\mu+\nu} \int_M^{1/r}t^{\nu}|f(t)|\, dt \\
			&= r^{\mu+\nu}\int_M^{1/r} t^{\mu+\nu-1} t^{1-\mu}|f(t)|\, dt \\
			&\leq\bigg( \max_{x\geq M}x^{1-\mu}|f(x)|\bigg) r^{\mu+\nu}\int_0^{1/r}t^{\mu+\nu-1}\, dt \\
			&\asymp \max_{x\geq M}x^{1-\mu}|f(x)|,
			\end{align*}
			which vanishes as $M\to \infty$, by \eqref{cond1-aux}.
			\end{proof}

\begin{proof}[Proof of Theorem~\ref{sinethmvanish}]
		We will see that our hypotheses imply those of Theorem~\ref{sinethm}, and the result will follow. Consider first the case $\mu<1$ and $\nu<\alpha+1/2$. Then
		$$
		M^{1-\mu}|f(M)|\leq M^{1-\mu}\int_M^\infty|df(t)| \to 0 \quad \text{as }M\to \infty,
		$$
		and
		$$
		M^{\alpha+3/2-\mu-\nu}\int_M^\infty t^{\nu-\alpha-1/2}|df(t)|\leq M^{1-\mu}\int_M^\infty|df(t)|
	\to 0 \quad \text{as }M\to \infty.
		$$
		
		If $\mu \geq 1$, since $f$ vanishes at infinity, \eqref{cond1} holds, and the hypotheses of Theorem~\ref{sinethm} are met.
		
		Finally, if $\nu \geq \alpha+1/2$,
		$$
		M^{1-\mu}|f(M)|\leq M^{1-\mu}\int_M^\infty|df(t)| \leq M^{\alpha+3/2-\mu-\nu}\int_M^\infty t^{\nu-\alpha-1/2} |df(t)| \to 0 \quad \text{as }M\to \infty,
		$$
		i.e., the hypotheses of Theorem~\ref{sinethm} hold.
\end{proof}

		\begin{proof}[Proof of Theorem~\ref{sinegm}]
			Since $f\in GM$, \eqref{cond1-aux} implies \eqref{sinthmbig} for any choice of the parameters. Indeed, since $\mu+\nu<\alpha+3/2$,
			\begin{align*}
			\int_M^\infty t^{\nu-\alpha-1/2} |df(t)| &\lesssim \int_{M/2}^\infty \frac{1}{t}\int_t^{2t} s^{\nu-\alpha-1/2}|df(s)| \lesssim \int_{M/2}^\infty t^{\nu-\alpha-5/2}\int_{t/\lambda}^{\lambda t}|f(s)|\, ds\\
			& \lesssim \int_{M/2}^\infty \bigg(\max_{t/\lambda\leq x\leq \lambda t} |f(x)| \bigg)t^{\nu-\alpha-3/2} \, dt \\
			&\asymp \int_{M/2}^{\infty} \bigg(\max_{t/\lambda\leq x\leq \lambda t} x^{1-\mu}|f(x)| \bigg)t^{\mu+\nu-\alpha-5/2}\, dt \\ &\lesssim \bigg( \max_{x\geq M/(2\lambda)} x^{1-\mu}|f(x)|\bigg) M^{\mu+\nu-\alpha-3/2}.
			\end{align*}
			Thus, we deduce that 
			$$
			M^{\alpha+3/2-\mu-\nu}\int_M^\infty t^{\nu-\alpha-1/2}\, |df(t)|\lesssim  \max_{t\geq M/(2\lambda)} t^{1-\mu}|f(t)| \to 0 \quad \text{as }M\to \infty,
			$$
		so that the result follows by applying Theorem~\ref{sinethm}.  This completes the first part of the proof.
		
		For the second part, the uniform convergence of $\mathcal{L}_{\nu,\mu}^\alpha f$ implies that the Cauchy remainder
		$$
		r^{\mu}\int_{1/(\lambda r)}^{\lambda/r} (rt)^\nu f(t)j_{\alpha}(rt)\, dt \asymp r^{\mu}\int_{1/(\lambda r)}^{\lambda/r}  f(t)  \, dt
		$$
		vanishes whenever $r\to 0$, where $\lambda>0$ is the $GM$ constant (cf. Definition~\ref{defgm}). By \eqref{gmest}, we have
		$$
		f(1/r) \lesssim r\int_{1/(\lambda r)}^{\lambda /r} f(t)\, dt =r^{1-\mu} r^{\mu}\int_{1/( \lambda r)}^{\lambda/r} f(t)\, dt,
		$$
		and we deduce that $r^{\mu-1}f(1/r)\to 0$ as $r\to 0$, or equivalently, $t^{1-\mu}f(t)\to 0$ as $t\to \infty$.
		\end{proof}

	\begin{proof}[Proof of Proposition~\ref{PROPsharpgm}]
		We construct $f$ in a general setting and then we subdivide the proof into two parts, namely $0<\mu+\nu\leq \alpha+1/2$ and $\alpha+1/2<\mu+\nu<\alpha+3/2$.
		
		Let $c_n$ be an increasing nonnegative sequence and $\varepsilon_n>0$ such that $\varepsilon_n<c_{n+1}-c_n$ and $\varepsilon_n\leq c_n$ for every $n$. Define
		$$
		f(t)=\begin{cases}
		t^{\mu-1}, &\text{if }t\in [c_n,c_n+\varepsilon_n],\, n\in \N,\\
		0,& \text{otherwise.}
		\end{cases}
		$$
		It is clear that for such function, $t^{1-\mu}f(t)\not \to 0$ as $t\to \infty$. We are now going to find choices of $c_n$ and $\varepsilon_n$ in such a way that $f\not\in GM$ and  $\mathcal{L}^\alpha_{\nu,\mu}f$ converges uniformly. Also, for any $c_n$ and $\varepsilon_n$, $t^\nu f(t)\in L^1(0,1)$, since $\mu+\nu>0$.
		
		Let us first consider the case $0<\mu+\nu\leq \alpha+1/2$. According to Corollary~\ref{PROPuclacunary}, the uniform convergence of $\mathcal{L}^\alpha_{\nu,\mu}f$ follows from $t^\nu f(t)\in L^1(\R_+)$, which in this case is equivalent to
		$$
		\sum_{n=1}^\infty \varepsilon_n c_n^{\nu+\mu-1}<\infty.
		$$
		Choosing $c_n=2^{n}$ and $\varepsilon_n=2^{-n\beta}$ with $\beta>\nu+\mu-1$, we find that the latter series converges, hence the uniform convergence of $\mathcal{L}^\alpha_{\nu,\mu}f$ follows. Note also that $f\not\in GM$.
		
		Consider now the case $\alpha+1/2<\mu+\nu<\alpha+3/2$.  According to Corollary~\ref{roughcorol}, the uniform convergence of $\mathcal{L}^\alpha_{\nu,\mu}f$ follows from the conditions
		$$
		t^{\nu-\alpha-1/2}f(t)\to 0 \text{ as }t\to \infty, \qquad t^{\nu-\alpha-1/2}f(t)\in L^1(1,\infty),\qquad \int_1^\infty t^{\nu-\alpha-1/2}|df(t)|<\infty.
		$$
		Since $\mu+\nu<\alpha+3/2$, $t^{\nu-\alpha-1/2}f(t)\to 0$ as $t\to\infty$. Also,
		\begin{equation}
		\label{EQintegrab-counterex}
		\int_1^\infty t^{\nu-\alpha-1/2}f(t) \, dt =\sum_{n=1}^\infty \varepsilon_n c_n^{\mu+\nu-\alpha-3/2},
		\end{equation}
		and
		\begin{equation}
		\label{EQvariation-counterex}
		\int_1^\infty t^{\nu-\alpha-1/2}|df(t)|\lesssim \sum_{n=1}^\infty \Big( c_n^{\mu+\nu-\alpha-3/2}+(c_n+\varepsilon_n)^{\mu+\nu-\alpha-3/2} \Big) \lesssim 
		\sum_{n=1}^\infty c_n^{\mu+\nu-\alpha-3/2}.
		\end{equation}
		Choosing $c_n=2^n$ and $\varepsilon_n=1$, we find that series on the right hand sides of \eqref{EQintegrab-counterex} and \eqref{EQvariation-counterex} are convergent, so that $\mathcal{L}^\alpha_{\nu,\mu}f$ converges uniformly by Corollary~\ref{roughcorol}, and $f\not\in GM$.
	\end{proof}
	
	\begin{proof}[Proof of Proposition~\ref{sharpparams2}]
		The proof is similar to that of Proposition 5.4 in \cite{LTZ}. First of all note that since $f$ is locally of bounded variation, condition \eqref{sinthmbig} is equivalent to the convergence of $\int_1^\infty t^{1-\mu} |df(t)|$. Since $f$ vanishes at infinity and $\mu<1$, the estimate
		\begin{align*}
		\int_1^\infty t^{-\mu}|f(t)|\, dt &\leq \int_1^\infty t^{-\mu}\int_t^\infty |df(s)|\, dt=\int_1^\infty |df(s)|\int_1^s t^{-\mu}\, dt \lesssim \int_1^\infty t^{1-\mu}|df(t)|
		\end{align*}
		proves one direction of the statement, without mention to the $GM$ condition. As for the other direction, we have, since $f\in GM$,
		\begin{align*}
		\int_{1}^\infty s^{1-\mu}|df(s)| &=\sum_{k=0}^\infty \int_{2^k}^{2^{k+1}} s^{1-\mu}|df(s)|\lesssim \sum_{k=0}^\infty (2^k)^{1-\mu} \frac{1}{2^k}\int_{2^k/\lambda}^{\lambda 2^k}|f(t)|\, dt \\
		&\asymp \sum_{k=0}^\infty \int_{2^k/\lambda}^{\lambda 2^k} t^{-\mu}|f(t)|\, dt \lesssim \int_{1/\lambda}^\infty t^{-\mu}|f(t)|\, dt,
		\end{align*}
		as desired. Observe that the latter holds for any $\mu$.
	\end{proof}	
	
	\subsection{Optimality of Theorems~\ref{sinethm} and \ref{sinethmvanish}}
	
	\textbf{Sharpness}. Here we are interested in studying if the conclusions of Theorems~\ref{sinethm}~and~\ref{sinethmvanish} hold if we replace $o$ by $O$ in conditions \eqref{cond1} and \eqref{cond2}, or \eqref{sinthmsmall} and \eqref{sinthmbig}.
	
	 1. Case $\mu< 1$. In this case, we will not discuss sharpness of Theorem~\ref{sinethm}, since condition \eqref{cond1} implies that $f$ vanishes at infinity, and therefore we are in the situation of Theorem~\ref{sinethmvanish}. Consider the function $f(t)=t^{1-\mu}$ and $\mu+\nu<\alpha+3/2$. It is clear that neither \eqref{sinthmsmall} nor \eqref{sinthmbig} hold, but they are satisfied if we replace $o$ by $O$. Since $\mu+\nu>0$, we have for any $r>0$
	$$
	r^{\mu+\nu}\int_{1/(2r)}^{1/r}t^{\nu} f(t) j_\alpha(rt)\, dt \asymp r^{\mu+\nu}\int_{1/(2r)}^{1/r} t^{\mu+\nu-1}\, dt\asymp 1,
	$$
	i.e., the Cauchy remainder does not vanish as $r\to 0$, and therefore $\mathcal{L}^\alpha_{\nu,\mu} f$ does not converge uniformly.

	2. Case $\mu=1$. Note that in this case the statements of Theorems~\ref{sinethm}~and~\ref{sinethmvanish} are equivalent. If $f(t)=1$, it is clear that \eqref{cond1} does not hold, but holds with $O$ in place of $o$, whilst \eqref{cond2} trivially holds. The Cauchy remainder is the same as in the previous example, substituting $\mu=1$, and thus $\mathcal{L}^\alpha_{\nu,\mu}f$ does not converge uniformly. 

	3. Case $\mu>1$. Here the example $f(t)=t^{1-\mu}$ shows that Theorem~\ref{sinethm} does not hold if we replace $o$ by $O$ in \eqref{cond1} and \eqref{cond2}. The examples $f(t)=t^{\mu-2}\sin t$ and $f(t)=1$ show that in general, conditions \eqref{cond1} and \eqref{cond2} do not imply each other.

	Finally, we show that the sufficient conditions involving the variation of $f$ that imply the uniform convergence of $\mathcal{L}^\alpha_{\nu,\mu}f$ (see Theorems~\ref{sinethm}~and~\ref{sinethmvanish}) do not imply neither follow from those integrability conditions that also imply the uniform convergence of $\mathcal{L}^\alpha_{\nu,\mu}f$ (cf. Corollary~\ref{roughcorol}).
		\newline
	
	\noindent\textbf{Independence of Theorem~\ref{sinethm}~and Corollary~\ref{roughcorol}}. 	
	Let $f(t)=t^{\mu-2}\sin t$ for $t>1$, and $\alpha+1/2\leq \mu+\nu<\alpha+3/2$. Since
	$$
	f'(t)=(\mu-2)t^{\mu-3}\sin t+t^{\mu-2}\cos t,
	$$
	we have that
	$$
	M^{\alpha+3/2-\mu-\nu}\int_M^\infty t^{\nu-\alpha-1/2}|f'(t)|\, dt \asymp M^{\alpha+3/2-\mu-\nu}\int_M^\infty t^{\mu+\nu-\alpha-5/2}\, dt \asymp 1,
	$$
or in other words, \eqref{cond2} does not hold. Thus, the hypotheses of Theorem~\ref{sinethm} are not satisfied. Nevertheless, the choice of the parameters implies that $t^{\nu-\alpha-1/2}f(t)\in L^1(0,1)$ (and $t^{-\mu}f(t)\in L^1(1,\infty)$ if $\mu+\nu=\alpha+1/2$), and moreover the conditions \eqref{pointwisecond} hold.  Hence, the uniform convergence of $\mathcal{L}^\alpha_{\nu,\mu}f$ follows, by Corollary~\ref{roughcorol}.

On the other hand, let $\alpha=1/2$, $\nu=1$ and $\mu=0$ (recall that $\mathcal{L}^{1/2}_{1,0} f = \hat{f}_{\sin}$). If $f(t)=\dfrac{1}{t\log t}$ for $t>2$, then clearly $f(t)\not\in L^1(2,\infty)$, but \eqref{cond2} holds, and the uniform convergence of $\hat{f}_{\sin}$ follows by Theorem~\ref{sinethm} (or also by Theorem~\ref{sinegm}, since $f$ vanishes at infinity).
\newline

\noindent\textbf{Independence of Theorem~\ref{sinethmvanish} and Corollary~\ref{roughcorol}}. Let us consider again $f(t)=t^{\mu-2}\sin t$, with $\mu<2$. We have already seen that $M^{\alpha+3/2-\mu-\nu}\int_M^\infty t^{\nu-\alpha-1/2}|f'(t)|\, dt \asymp 1$, and that additionally to $t^{\nu-\alpha-1/2}f(t)\in L^1(1,\infty)$, the conditions \eqref{pointwisecond} hold. Thus, in the case $\nu\geq \alpha+1/2$ or $\mu\geq 1$, we cannot apply Theorem~\ref{sinethmvanish}, but we can apply Corollary~\ref{roughcorol} instead to deduce the uniform convergence of $\mathcal{L}^\alpha_{\nu,\mu}f$. On the other hand, it is easy to see that if $\nu<\alpha+1/2$ and $\mu<1$, the hypotheses of Corollary~\ref{roughcorol} hold, and $M^{1-\mu}\int_M^{\infty}|f'(t)|\, dt \asymp 1$.

Now let $\nu\geq \alpha+1/2$ and $\alpha+1/2-\nu\leq \mu<1$. If $f(t)=\dfrac{t^{\alpha-\nu-1/2}}{\log t}$, then $f$ vanishes at infinity, but $t^{\nu-\alpha-1/2}f(t)=\dfrac{1}{t\log t}\not\in L^1(2,\infty)$. However,
$$
M^{\alpha+3/2-\mu-\nu}\int_M^\infty t^{\nu-\alpha-1/2}|f'(t)|\, dt \lesssim M\int_M^\infty  \frac{1}{t^2\log t}\, dt \lesssim \frac{1}{\log M}\to 0 \quad \text{as }M\to \infty,
$$
so that \eqref{sinthmbig} holds. In the case $\mu\geq 1$, note that $\nu<\alpha+1/2$, and hence the inequality $\mu+\nu\geq \alpha+1/2$ implies that $\alpha-1/2\leq \nu$. Thus, the same function $f$ as above vanishes at infinity, and also satisfies \eqref{sinthmbig}, whilst $t^{\nu-\alpha-1/2}f(t)\not\in L^1(2,\infty)$. Finally, consider the case $\nu<\alpha+1/2$ and $\mu<1$. Let $f(t)=\dfrac{t^{\mu-1}}{\log t}$. The inequality $\mu+\nu\geq \alpha+1/2$ implies that
$$
t^{\nu-\alpha-1/2} f(t) =\dfrac{t^{\mu+\nu-\alpha-3/2}}{\log t}\geq \frac{1}{t\log t} \not\in L^1(2,\infty),
$$
hence $f$ is not under the hypotheses of Corollary~\ref{roughcorol}. However, note that since $f$ is monotone,
$$
M^{1-\mu}\int_M^\infty |f'(t)|\, dt = M^{1-\mu}f(M) =\frac{1}{\log M}\to 0 \quad \text{as }M\to \infty,
$$
and $\mathcal{L}^\alpha_{\nu,\mu}f$ converges uniformly, in virtue of Theorem~\ref{sinethmvanish} (or also by Theorem~\ref{sinegm}).

	\end{document}